\theoremstyle{plain} 
\newtheorem{theorem}{Theorem}[section]
\newtheorem{proposition}[theorem]{Proposition}
\newtheorem{cor}[theorem]{Corollary}
\newtheorem{lemma}[theorem]{Lemma}
\newtheorem{assumption}[theorem]{Assumptions}
\theoremstyle{oss}
\newtheorem{remark}[theorem]{Remark}
\newcommand{\R}{\mathbb{R}}
\newcommand{\N}{\mathbb{N}}
\newcommand{\eps}{\varepsilon}
\DeclareMathOperator{\graph}{graph}
\newcommand{\haus}{\mathcal{H}}
\newcommand{\leb}{\mathcal{L}}
\newcommand{\spt}{\mathrm{spt}}
\newcommand{\reg}{\mathrm{reg}}
\newcommand{\sing}{\mathrm{sing}}
\newcommand{\del}{\partial}
\newcommand{\uu}{\underline{u}}
\newcommand{\ou}{\overline{u}}
\def\XXint#1#2#3{{\setbox0=\hbox{$#1{#2#3}{\int}$}
		\vcenter{\hbox{$#2#3$}}\kern-.5\wd0}}
\numberwithin{equation}{section}
\title[Strong maximum principle]{A strong maximum principle for minimizers of the one-phase Bernoulli problem}
\author[N.~Edelen]{Nick Edelen}
\address{\textit{N.~Edelen:} 
Department of Mathematics, University of Notre Dame, Notre Dame, IN, 46556}
\email{nedelen@nd.edu}
\author[L.~Spolaor]{Luca Spolaor}
\address{\textit{L.~Spolaor:} Department of Mathematics, University of California, San Diego, La Jolla, CA, 92093}
\email{lspolaor@ucsd.edu}
\author[B.~Velichkov]{Bozhidar Velichkov}
\address{\textit{B.~Velichkov:} 	
	Dipartimento di Matematica, Universit\`a di Pisa, 
	Largo Bruno Pontecorvo, 5, 56127 Pisa - ITALY}
\email{bozhidar.velichkov@unipi.it}
\subjclass[2010]{35R35}
\keywords{Strong maximum principle, free boundary, Alt-Caffarelli, singular cones}
\begin{document}

	\begin{abstract} We prove a strong maximum principle for minimizers of the one-phase Alt-Caffarelli functional.  We use this to construct a Hardt-Simon-type foliation associated to any $1$-homogenous global minimizer.		
	\end{abstract}
	\maketitle
	
	\tableofcontents
	
\section{Introduction} 	

In this paper we prove a strong maximum principle for variational solutions of the one-phase Bernoulli problem. For an open set $U\subset \R^d$ and a function $u\in W^{1,2}(U)$, we consider the following functional  introduced by Alt and Caffarelli in \cite{AlCa}
\begin{equation}
J_U(u): = \int_{U}\Big(|D u|^2 + 1_{\{ u > 0 \}}\Big)\,dx. 
\end{equation}
We recall that a nonnegative function $u \in W^{1,2}(U)$ is a {\it minimizer} of $J_U$ (in $U$) if
\[
J_U(u)\leq J_U(u+v)\qquad \text{for every}\qquad v\in W^{1,2}_0(U). 
\]
Similarly, we say that a nonnegative function $u \in W^{1,2}_{loc}(U)$ is a {\it (local-)minimizer} of $J_U$ if  minimizes $J_{U'}$ for all $U' \subset\subset U$; if $U=\R^d$ and $u \in W^{1,2}_{loc}(\R^d)$ is a local-minimizer of $J_{\R^d}$, then we say that $u$ is {\it global minimizer}.

It is well known that if $u\in W^{1,2}_{loc}(U)$ is a minimizer of $J_U$, then it is locally Lipschitz in $U$ and that, denoting with $\Omega_u:=\{u>0\}$ the positivity set of $u$, its free boundary $\del \Omega_u \cap U$ can be decomposed into the disjoint union $\del \Omega_u \cap U=\reg(u)\cup \sing(u)$, where  $\reg(u)$ is relatively open and smooth subset of $\partial\Omega$ and $\sing(u)$ is a closed set of dimension at most $d-5$ (see for instance \cite[Theorems 1.2 and 1.4]{Bojo_ln} and the references therein). Moreover, $u$ solves the overdetermined boundary value problem 
\begin{equation}\label{eqn:EL}
\begin{cases}
\Delta u = 0 &\quad \text{ in } \Omega_u \cap U, \\
u = 0 &\quad \text{ on } \del \Omega_u \cap U\\
D_\nu u = -1 &\quad \text{ on }  \reg(u)\cap U,
\end{cases}
\end{equation}
where $\nu$ denotes the outer unit normal of $\Omega_u \cap U$.

If $u, v \in W^{1,2}(U)$ are minimizers of $J_U$ such that $u\leq v$ (so that $\Omega_u\subset\Omega_v$) and if $\Omega_v$ is connected, then by the classical Hopf maximum principle it follows  that either 
$$u \equiv v\qquad\text{or}\qquad \reg(u)\cap \reg(v)=\emptyset.$$
In this paper we prove a strong ``geometric'' maximum principle, similar to the one known in the minimal surface case (see e.g. \cite{Il, Si_max, SoWh, Wi}), which rules out the \emph{singular} parts of the free boundaries touching either.

\begin{theorem}\label{thm:main}
Let $U\subset\R^d$ be an open set and $u, v \in W^{1,2}_{loc}(U)$ be minimizers of $J_U$.  Suppose that $u \leq v$ and $\reg(u) \cap \reg(v) = \emptyset$ in $U$.  Then $\del\Omega_u \cap \del \Omega_v \cap U = \emptyset$.
\end{theorem}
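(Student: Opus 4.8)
The plan is to argue by contradiction: assume, after a translation, that $0\in\del\Omega_u\cap\del\Omega_v\cap U$, and show this forces $u\equiv v$ in a neighbourhood of $0$. That is absurd, since then $\reg(u)$ and $\reg(v)$ coincide near $0$ and are non-empty (as $0\in\del\Omega_u=\overline{\reg(u)}$, the singular set having dimension $\le d-5$), contradicting $\reg(u)\cap\reg(v)=\emptyset$. Everything is organised around $w:=v-u$. Because $u$ is a minimizer one has $\Delta u=\haus^{d-1}|_{\del^*\Omega_u}\ge0$ as a distribution (for $\phi\ge0$, $\int u\,\Delta\phi=-\int_{\del\Omega_u}\phi\,\del_\nu u=\int_{\reg(u)}\phi\ge0$, using \eqref{eqn:EL} and $\haus^{d-1}(\sing(u))=0$), so $w$ is a nonnegative superharmonic function on $\Omega_v$; it is continuous, and $w=0$ on $\del\Omega_v$ because $0\le u\le v=0$ there. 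By the strong maximum principle for superharmonic functions, on the connected component of $\Omega_v$ at $0$ either $w\equiv0$ near $0$ — and we are done — or $w>0$.

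First I would blow up at $0$. By the Weiss monotonicity formula and the compactness of minimizers, the rescalings $u_r(x)=u(rx)/r$, $v_r(x)=v(rx)/r$ subconverge, locally uniformly, to $1$-homogeneous global minimizers $u_0\le v_0$ with $0\in\del\Omega_{u_0}\cap\del\Omega_{v_0}$, both still non-degenerate. The crucial claim is $v_0\equiv u_0$. Indeed $w_0:=v_0-u_0$ is $1$-homogeneous, nonnegative, superharmonic on $\Omega_{v_0}$, bounded above by $v_0$, and vanishes on $\del\Omega_{v_0}$. Restricting to the cross-section $\Sigma=\Omega_{v_0}\cap S^{d-1}$ and writing $w_0=r\,g$, $v_0=r\,g_1$, these facts become $-\Delta_{S^{d-1}}g-(d-1)g\ge0$ as a measure, $-\Delta_{S^{d-1}}g_1=(d-1)g_1$, $g_1>0$, and $g,g_1\in H^1_0(\Sigma)$; thus $g_1$ is the first Dirichlet eigenfunction of $\Sigma$, with eigenvalue $d-1$ (the value forced on cross-sections of homogeneous minimizers). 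Pairing the differential inequality with $g_1>0$ and integrating by parts yields equality, so $g=c\,g_1$ and $w_0=c\,v_0$ for some $c\ge0$; then $u_0=(1-c)v_0$, non-degeneracy of $u_0$ excludes $c=1$, and if $0\le c<1$ the condition $|\nabla u_0|=|\nabla v_0|=1$ on $\reg(u_0)=\reg(v_0)\ne\emptyset$ forces $c=0$. Hence $u_0=v_0$, so $w_r\to0$ locally uniformly as $r\to0$; since $v$ is non-degenerate, evaluating at corkscrew points $y$ of $\Omega_v$ near $0$ gives $w(y)/v(y)\to0$.

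Finally I would upgrade this. Fix a small $\rho$ and let $\Omega'$ be the component of $\Omega_v\cap B_\rho$ at $0$. The Riesz decomposition on $\Omega'$ gives $w=G_{\Omega'}\mu+h$, where $\mu=\haus^{d-1}|_{\del^*\Omega_u\cap\Omega'}$ and $h$, the greatest harmonic minorant of $w$, is nonnegative; moreover $G_{\Omega'}\mu\le w<\infty$ is a genuine potential and vanishes at every boundary point of $\Omega'$, because $\Omega_v$ is an NTA domain — a standard consequence of non-degeneracy and the density estimates (using that the zero set of each blow-up has non-empty interior) — so every such point is regular for the Dirichlet problem. Thus $h$ is continuous on $\overline{\Omega'}$ with $h=w$ on $\del\Omega'$; in particular $h$ is a nonnegative harmonic function on $\Omega'$ vanishing on $\del\Omega_v\cap B_\rho$, with $h\le w$, so $h(y)/v(y)\to0$ at $0$ along corkscrew points. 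By the boundary Harnack principle in $\Omega_v$ this is impossible unless $h\equiv0$; hence $w=G_{\Omega'}\mu$, which vanishes on $\del B_\rho\cap\Omega_v$. Letting $\rho$ range over a small interval forces $w\equiv0$ on $\Omega_v$ near $0$, i.e. $u\equiv v$ near $0$ — the contradiction we sought.

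I expect the main obstacle to be the potential theory in the last step: one must verify that $\Omega_v$ is regular enough — NTA, with all boundary points regular for the Dirichlet problem — for the Riesz decomposition to have the clean boundary behaviour used above and for the boundary Harnack principle to apply, taking care that the singular points of $u$ and $v$ accumulate along $\del\Omega_u\cap\del\Omega_v$. That common set, however, lies in $\sing(u)\cap\sing(v)$, which has dimension $\le d-5$ and is therefore polar in $\R^d$, so those points are removable for the harmonic comparison and cause no harm. A secondary technical point is to make the spherical computation in the blow-up rigorous: deriving $-\Delta_{S^{d-1}}g-(d-1)g\ge0$ as a measure from the superharmonicity of $w_0$, and identifying $d-1$ as the first Dirichlet eigenvalue of the cross-section $\Sigma$.
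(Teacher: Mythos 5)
Your proposal takes a genuinely different route from the paper's. The paper linearizes: after a Federer dimension-reduction step it produces, at a common free-boundary point, a blow-up Jacobi field $w$ on the cone $\Omega_{u_0}$ satisfying a Robin condition $D_\nu w + Hw = 0$, and then contradicts a self-built De Giorgi--Nash--Moser Harnack inequality (via the isoperimetric/Sobolev machinery of Sections 2--4) against the $O(r)$ decay of $w$. You instead keep the nonlinear quantity $w = v - u$, observe it is a nonnegative superharmonic function on $\Omega_v$ vanishing on $\del\Omega_v$, and close the argument with Riesz decomposition plus the boundary Harnack principle on $\Omega_v$ itself. Two things in your argument are worth highlighting. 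First, your spherical eigenvalue computation showing $u_0 = v_0$ for \emph{every} tangent pair is a clean shortcut: it replaces the paper's iterated dimension reduction (their Step~1) with a single integration-by-parts argument against the first Dirichlet eigenfunction, and it is essentially the same device the paper itself uses in the proof of \cref{lem:conn}, so it is consistent with their toolbox. Second, by working with $w = v - u$ directly you replace the paper's Neumann-type Harnack inequality on the fixed cone $\Omega_{u_0}$ with a Dirichlet-type boundary Harnack principle on the non-conical domain $\Omega_v$.

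The one place where you have not really closed the argument is precisely where you flag the obstacle: you need $\Omega_v$ to be NTA near the touching point (in particular to satisfy the Harnack chain condition at all scales, not just the corkscrew conditions), and you need the Jerison--Kenig boundary Harnack principle, the identification of the greatest harmonic minorant with the Perron--Wiener--Brelot solution, and the vanishing of the potential $G_{\Omega'}\mu$ at regular boundary points, all uniformly near a possibly singular boundary point. The corkscrew conditions follow from \cref{lem:ahlfors} and the nonempty interior of the zero cone, but the Harnack chain condition for $\Omega_v$ near a singular point is not obvious and I do not believe it is stated anywhere in the generality you need; it would itself require a compactness argument relying on connectivity of tangent cones (and even so, the quantitative chain length needs work). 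This is exactly the technology the paper's Sections 2--4 are designed to avoid: they build a Harnack inequality directly on the cone $\Omega_{u_0}$ via the relative isoperimetric inequality, which sidesteps any NTA or BHP hypothesis on $\Omega_v$. So the proposal is structurally sound and the eigenvalue simplification is genuine, but the potential-theoretic endgame, as written, imports a nontrivial regularity statement about $\Omega_v$ that is neither proved nor clearly citable, and making it rigorous would likely cost about as much as the paper's self-contained DGNM development.
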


As an immediate consequence we obtain the following alternative statements of the strong maximum principle. 
 
\begin{cor}\label{cor:main}
Let $U$ be an open set in $\R^d$ and $u, v \in W^{1,2}_{loc}(U)$ be minimizers of $J_U$.  Suppose that $u \leq v$, and $\Omega_v$ is connected.  Then, we have the following dichotomy:
\begin{enumerate}[\quad\rm(i)]
 \item either $u\equiv  v$ in $U$; \smallskip
 \item or $\del\Omega_u \cap \del\Omega_v \cap U = \emptyset$ and $u < v$ on $\Omega_v \supset \overline{\Omega}_u \cap U$.
 \end{enumerate}
\end{cor}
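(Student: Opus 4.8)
The plan is to obtain \Cref{cor:main} from \Cref{thm:main} by combining it with two classical facts: the Hopf maximum principle at the regular part of the free boundary (recalled in the introduction) and the strong minimum principle for superharmonic functions. Alternative (i) accounts for the case $u\equiv v$, so we assume $u\not\equiv v$ and must verify the three assertions in (ii). Since $u\le v$, $\Omega_v$ is connected, and $u\not\equiv v$, the dichotomy recalled in the introduction — an application of Hopf's lemma to the harmonic function $v-u$ on $\Omega_u$, together with the overdetermined condition $D_\nu u=D_\nu v=-1$ at a hypothetical common regular point — yields $\reg(u)\cap\reg(v)=\emptyset$; then \Cref{thm:main} gives $\del\Omega_u\cap\del\Omega_v\cap U=\emptyset$, the first assertion of (ii).

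Second, I would upgrade this to $\overline{\Omega}_u\cap U\subset\Omega_v$ by pure point-set topology. From $\Omega_u\subset\Omega_v$ we get $\del\Omega_u\subset\overline{\Omega}_u\subset\overline{\Omega}_v=\Omega_v\cup\del\Omega_v$; intersecting with $U$ and using the previous step to discard the points of $\del\Omega_v$ leaves $\del\Omega_u\cap U\subset\Omega_v$, and combined with $\Omega_u\subset\Omega_v$ this is the claimed inclusion.

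Third, for the strict inequality $u<v$ on $\Omega_v$, set $w:=v-u\ge0$. Since $v$ is harmonic on $\Omega_v$ while minimizers of $J_U$ are subharmonic on $U$ (a standard fact, see e.g.\ \cite{Bojo_ln}), the function $w$ is superharmonic on the connected open set $\Omega_v$. If $w$ vanished at some point of $\Omega_v$, it would attain its infimum $0$ at an interior point and hence be identically zero on $\Omega_v$; but then $u=v$ on $\Omega_v$, while $u=0=v$ on $U\setminus\Omega_v$ (both functions being nonnegative), contradicting $u\not\equiv v$. Hence $w>0$ throughout $\Omega_v$. (If one prefers to avoid quoting subharmonicity of minimizers, the same follows by running the harmonic strong maximum principle on each connected component $\Omega'$ of $\Omega_u$, where $u$ and $v$ are both harmonic, and excluding $w\equiv0$ on $\Omega'$: a boundary point $x_0\in\del\Omega'\cap U$ lies in $\Omega_v$ by the second step, so $v(x_0)>0$, yet continuity of $u=v$ on $\overline{\Omega'}$ forces $v(x_0)=u(x_0)=0$; the borderline case $\del\Omega_u\cap U=\emptyset$ reduces, via connectedness of $\Omega_v$, to $\Omega_u=\Omega_v$ and is then handled directly.)

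The substantive obstacle is entirely contained in \Cref{thm:main}: once no part of the two free boundaries may touch, the corollary is a bookkeeping exercise. Within it the only step that asks for a moment's thought is the strict inequality — i.e.\ ruling out that $u\equiv v$ on an open portion of $\Omega_u$ — and this is exactly where the non-touching conclusion of \Cref{thm:main} enters.
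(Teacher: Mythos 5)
Your proof is correct and is essentially the expansion of the paper's one-line proof ("a direct consequence of \cref{thm:main}, the Hopf maximum principle, and the connectivity of $\Omega_v$"): you invoke the Hopf dichotomy to show $\reg(u)\cap\reg(v)=\emptyset$, apply \cref{thm:main}, and then use connectedness of $\Omega_v$ together with superharmonicity of $v-u$ (or, in your parenthetical variant, the harmonic strong maximum principle on components of $\Omega_u$) to obtain the strict inequality and the inclusion. One small presentational remark: the strict positivity $v-u>0$ on $\Omega_v$ is logically what underpins the Hopf step at a hypothetical common regular point, so it would read more cleanly to establish it first; the content, however, is all there and sound.
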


\begin{cor}\label{cor:main2}
Let $U$ be a bounded Lipschitz domain, and let $u, v \in W^{1,2}(U)$ be minimizers of $J_U$.  Suppose that $u \leq v$, and $u < v$ on $\{ x \in \del U : v(x) > 0 \}$.  Then 
$$\del \Omega_u \cap \del \Omega_v \cap U = \emptyset\qquad\text{and}\qquad u < v\quad\text{on}\quad \Omega_v \supset \overline{\Omega}_u \cap U.$$
\end{cor}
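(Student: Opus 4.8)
The plan is to derive \Cref{cor:main2} from \Cref{thm:main}: I will show that the hypotheses force $\reg(u)\cap\reg(v)\cap U=\emptyset$, after which \Cref{thm:main} gives $\partial\Omega_u\cap\partial\Omega_v\cap U=\emptyset$, and the remaining assertions follow by elementary maximum-principle considerations. Two things could a priori obstruct $\reg(u)\cap\reg(v)\cap U=\emptyset$: \emph{(a)} the two free boundaries touch at a common \emph{regular} point; or \emph{(b)} $u\equiv v$ on an entire connected component $V$ of $\Omega_v$, in which case $\reg(v)\cap\partial V\subset\reg(u)$. The boundary hypothesis is used to exclude \emph{(b)}; once that is done, \emph{(a)} is excluded by the Hopf lemma. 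The boundedness of $U$ enters only through the maximum principle in Step 1, and the Lipschitz regularity only through the trace theory for $W^{1,2}(U)$.

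\emph{Step 1 (no component of $\Omega_v$ carries $u\equiv v$, and $u<v$ on $\Omega_v$).} Let $V$ be a connected component of $\Omega_v$. Then $u|_V,v|_V\in W^{1,2}_{loc}(V)$ are minimizers of $J_V$ with $u\le v$ and with positivity set $\{v>0\}\cap V=V$ connected, so \Cref{cor:main} applied on $V$ yields: either $u\equiv v$ on $V$, or $u<v$ on $V$. Suppose the first alternative holds. The function $v\,\mathbf 1_V$ belongs to $W^{1,2}(U)$, is continuous and nonnegative, vanishes on $U\setminus V$, and is subharmonic in $U$ — like $v$ itself, since it is harmonic on its positivity set $V$, which is open, and $v=0$ on $\partial V\cap U$. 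Were its trace on $\partial U$ zero $\mathcal H^{d-1}$-a.e., the maximum principle would give $v\,\mathbf 1_V\equiv 0$ in $U$, i.e.\ $V=\emptyset$; hence $v$ has positive trace on a set $S\subset\overline V\cap\partial U$ of positive $\mathcal H^{d-1}$-measure. Since $W^{1,2}$-traces on the Lipschitz boundary $\partial U$ are local and $u=v$ on $V$, the traces of $u$ and $v$ agree — and are positive — on $S$, contradicting $u<v$ on $\{v>0\}\cap\partial U$. Therefore $u<v$ on $V$ for every component $V$, hence $u<v$ throughout $\Omega_v$.

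\emph{Step 2 ($\reg(u)\cap\reg(v)\cap U=\emptyset$).} Suppose $x_0\in\reg(u)\cap\reg(v)\cap U$, and fix a small ball $B\subset U$ centered at $x_0$ in which both $\partial\Omega_u$ and $\partial\Omega_v$ are smooth. Since $\Omega_u\subset\Omega_v$ and $x_0$ lies on both free boundaries, the two hypersurfaces are internally tangent at $x_0$, with a common outer unit normal $\nu$. The function $w:=v-u$ is harmonic and nonnegative on $\Omega_u\cap B$, is $C^1$ up to $\partial\Omega_u$ near $x_0$ by free-boundary regularity, and vanishes at $x_0$. By the Hopf boundary-point lemma, either $D_\nu w(x_0)<0$ — impossible, because $D_\nu u(x_0)=D_\nu v(x_0)=-1$ by \eqref{eqn:EL}, so $D_\nu w(x_0)=0$ — or $w\equiv 0$ in a neighborhood of $x_0$ in $\Omega_u$. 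In the latter case the smooth graphs $\partial\Omega_u\cap B$ and $\partial\Omega_v\cap B$ must coincide near $x_0$ (otherwise there would be a sliver of $\Omega_v\setminus\overline\Omega_u$ near $x_0$ on which $u=0$, forcing $v=0$ at a point of $\partial\Omega_u$ that lies inside $\Omega_v$), so $u\equiv v$ on $B'\cap\Omega_v$ for a smaller ball $B'\ni x_0$; a connectedness argument along paths inside $\Omega_v$ then propagates this to $u\equiv v$ on the whole connected component of $\Omega_v$ adjacent to $x_0$, contradicting Step 1. Thus no such $x_0$ exists.

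\emph{Step 3 (conclusion).} By Step 2 and \Cref{thm:main}, $\partial\Omega_u\cap\partial\Omega_v\cap U=\emptyset$. If $x_0\in\overline\Omega_u\cap U$, then either $x_0\in\Omega_u\subset\Omega_v$, or $x_0\in\partial\Omega_u\cap U$; in the latter case $u(x_0)=0$ while $u>0$, hence $v>0$, at points arbitrarily close to $x_0$, so $x_0\in\overline\Omega_v$, and $v(x_0)=0$ would force $x_0\in\partial\Omega_u\cap\partial\Omega_v\cap U=\emptyset$, impossible; so $v(x_0)>0$ and $x_0\in\Omega_v$. Therefore $\overline\Omega_u\cap U\subset\Omega_v$, and $u<v$ on $\Omega_v$ by Step 1, which completes the proof. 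I expect the main obstacle to be the last part of Step 1 — proving rigorously that $u\equiv v$ on a connected component of $\Omega_v$ is incompatible with the boundary data — which requires combining the maximum principle for the subharmonic function $v\,\mathbf 1_V$ with the locality of $W^{1,2}$-traces on $\partial U$ in order to exhibit a genuinely positive-$\mathcal H^{d-1}$-measure portion of $\partial U$ on which $\operatorname{tr}u=\operatorname{tr}v>0$; the rest is routine Hopf-lemma and maximum-principle bookkeeping.
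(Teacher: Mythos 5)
Your proposal is correct and follows essentially the same route as the paper: exclude the possibility that $u\equiv v$ on some connected component of $\Omega_v$ by exhibiting a positive-$\haus^{d-1}$-measure piece of $\del U$ where both traces are positive and equal, then rule out $\reg(u)\cap\reg(v)\neq\emptyset$ via Hopf, and finally invoke \cref{thm:main}. The one genuine difference is in how you show that $\operatorname{tr}v$ cannot vanish a.e.\ on $\del U\cap\overline V$: you argue that $v\,\mathbf 1_V$ is subharmonic in $U$ and apply the weak maximum principle, whereas the paper argues variationally, observing that if $v|_{\del U\cap\overline V}\equiv 0$ then $v\,\mathbf 1_V\in W^{1,2}_0(U)$, so the competitor $v'=v\,\mathbf 1_{U\setminus V}$ satisfies $v'-v\in W^{1,2}_0(U)$ and $J_U(v')<J_U(v)$, contradicting minimality. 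The variational argument is a bit more direct (no need to verify weak subharmonicity of the truncation, which requires handling the integration by parts across $\del V$, where the free boundary can be merely rectifiable), and it stays entirely inside the variational framework; your argument trades that for a familiar PDE-theoretic fact and is equally valid. Your Step 2 is also slightly over-engineered: once Step 1 gives $u<v$ on $\Omega_v\supset\Omega_u$, the difference $w=v-u$ is strictly positive in $\Omega_u$, so Hopf immediately yields $D_\nu w(x_0)<0$, and the dichotomy you set up (``or $w\equiv0$ nearby'') cannot actually occur -- but this only makes the write-up longer, not wrong.
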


We expect \cref{thm:main} to be a useful technical tool, as it has been the case for the analogous result in minimal surface theory. In particular, we demonstrate an application of our strict maximum principle in the following \cref{thm:fol}, which proves the existence of a ``Hardt-Simon''-type foliation associated to \emph{any} $1$-homogenous minimizer, again analogous to the one known for area-minimizing hypercones (see e.g. \cite{BoDeGi, HaSi, Wa}).  We mention that \cite{DeJeSh} contains versions of \cref{thm:main}, \cref{thm:fol} for minimizers with isolated singularities (see also \cref{rem:fol}); our maximum principle, for general minimizers, is proven using a fundamentally different approach, and the increased generality is the reason we are able to prove existence (but not uniqueness!) of the foliation in greater generality also.
\begin{theorem}\label{thm:fol}
Let $u_0 \in W^{1,2}_{loc}(\R^d)$ be a global $1$-homogeneous minimizer of $J_{\R^d}$.  Then there exist global minimizers $\underline{u}, \overline{u} \in W^{1,2}_{loc}(\R^d)$ such that
\begin{enumerate}[\rm(1)]
\item $\underline{u} \leq u_0 \leq \overline{u}$; \label{item:fol-1}\smallskip
\item $d(0, \Omega_{\underline{u}}) = d(0, \Omega_{\overline{u}}) = 1$; \label{item:fol-2}\smallskip
\item $-\uu(x) + x \cdot D\uu(x) > 0$ for $x \in \overline{\Omega}_{\underline{u}}$\,, and $-\ou(x) + x \cdot D\ou(x) < 0$ for $x \in \overline{\Omega}_{\ou}$\,; \label{item:fol-3} \smallskip
\item $\sing(\underline{u}) = \sing(\overline{u}) = \emptyset$; \label{item:fol-4} \smallskip
\item $\underline{u}_{0, r}\to u_0$ and $\overline{u}_{0, r} \to u_0$ in $(W^{1,2}_{loc} \cap C^\alpha_{loc})(\R^d)$ as $r \to \infty$. \label{item:fol-5}
\end{enumerate}
In particular, the hypersurface $\del \Omega_{\uu}$ (resp. $\del\Omega_{\ou}$) is an analytic radial graph over $\Omega_{u_0} \cap \del B_1$ (resp. $\del B_1 \setminus \overline{\Omega}_{u_0}$), and the dilations
\[
\big\{\lambda \del \Omega_{\underline{u}}\ :\ \lambda>0\big\}\cup \big\{\lambda \del \Omega_{\overline{u}}\ :\ \lambda>0\big\} 
\]
foliate $\R^d \setminus \del \Omega_{u_0}$.
\end{theorem}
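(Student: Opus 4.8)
The plan is to adapt the Hardt--Simon construction of the foliation associated with an area-minimizing hypercone, using minimizers of $J$ in place of area-minimizing hypersurfaces and \cref{thm:main} in place of the minimal-surface maximum principle. Since $u_0$ is $1$-homogeneous, hence scale invariant, each of $\uu$ and $\ou$ will be produced by a single one-sided (obstacle) minimization, and their dilations will give the foliation.

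\emph{Step 1: construction and properties (1)--(2).} Fix $\rho>0$. For $R>\rho$ let $u_{\rho,R}$ minimize $J_{B_R}$ over $\{u\ge 0:\ u=u_0$ on $\del B_R$, $\ u\equiv 0$ on $\overline B_\rho\}$. A lattice comparison --- using $J(u\wedge w)+J(u\vee w)=J(u)+J(w)$, minimality of $u_{\rho,R}$ against $u_{\rho,R}\wedge u_0$, and minimality of $u_0$ in $B_R$ against $u_{\rho,R}\vee u_0$ --- gives $u_{\rho,R}\le u_0$; uniform Lipschitz bounds and $W^{1,2}_{loc}$-compactness then produce, as $R\to\infty$, a minimizer $\uu_\rho$ of $J$ in $\R^d\setminus\overline B_\rho$ with $0\le\uu_\rho\le u_0$ and $\uu_\rho\not\equiv 0$ (density estimates). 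By scale invariance of $u_0$ the $\uu_\rho$ are dilations of one another, so after a normalizing dilation we obtain $\uu$ satisfying property (2); since its positivity set avoids $\overline B_1$ with room, the obstacle is inactive as a PDE constraint and $\uu$ is a genuine global minimizer. The leaf $\ou$ is built symmetrically, by a one-sided minimization against a fixed obstacle from below whose positivity set contains $\Omega_{u_0}$ and a punctured ball about the origin, which after normalization gives $\ou\ge u_0$ and property (2). This proves (1)--(2).

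\emph{Step 2: separation from the cone, monotone dilations, property (3) at regular points.} Since $\Omega_{u_0}$ is connected and $\uu\not\equiv u_0$ (indeed $\uu=0$ on $B_1\cap\Omega_{u_0}\ne\emptyset$, where $u_0>0$), the Hopf lemma gives $\reg(\uu)\cap\reg(u_0)=\emptyset$, so \cref{thm:main} yields $\del\Omega_{\uu}\cap\del\Omega_{u_0}=\emptyset$ and hence $\overline\Omega_{\uu}\subset\Omega_{u_0}$. Comparing $\uu$ with a dilation $\uu_{0,\lambda}$ by the same lattice argument --- $\uu$ is admissible for the less constrained problem solved by $\uu_{0,\lambda}$ when $\lambda\ge 1$ --- gives $\uu(\lambda x)\ge\lambda\uu(x)$; thus $\lambda\mapsto\uu_{0,\lambda}$ is monotone, every ray from $0$ inside $\Omega_{u_0}$ meets $\del\Omega_{\uu}$ at most once (so $\del\Omega_{\uu}$ is a radial graph over a relatively open subset of $\Omega_{u_0}\cap\del B_1$, with $\Omega_{\uu}$ the ``outer'' side), and $L\uu:=-\uu+x\cdot D\uu\ge 0$. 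Since $\Delta(L\uu)=0$ in the connected set $\Omega_{\uu}$, the strong maximum principle forces $L\uu\equiv 0$ or $L\uu>0$ in $\Omega_{\uu}$; the former would make $\uu$ $1$-homogeneous with $0\in\del\Omega_{\uu}$, contradicting $\overline\Omega_{\uu}\subset\Omega_{u_0}$. So $L\uu>0$ in $\Omega_{\uu}$, and at a regular free boundary point $p$ the condition $D_\nu\uu=-1$ and the radial-graph geometry give $L\uu(p)=-p\cdot\nu>0$; hence (3) holds once (4) is known. The corresponding statements for $\ou$, and property (5), follow from scale invariance, compactness of minimizers, and the monotone convergence of the dilations.

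\emph{Step 3: smoothness of the leaves, property (4), and the foliation.} The remaining --- and hardest --- point is (4). Since $\del\Omega_{\uu}$ is separated from $\del\Omega_{u_0}$, a point $p\in\sing(\uu)$ lies in the \emph{open} set $\Omega_{u_0}$, where $\uu\le u_0$ gives no information; instead one uses the foliation. A neighbourhood of $p$ is foliated by the dilated leaves $\lambda\del\Omega_{\uu}$, which near $p$ are, to first order, the translates $\del\Omega_{\uu}+sp$; blowing up at $p$, $\uu$ converges to a $1$-homogeneous minimizer $w$ singular at $0$, and the dilations converge to a foliation of $\R^d$ by the translates $\del\Omega_w+tp$. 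As each leaf is a radial graph, $\del\Omega_w$ is a Lipschitz graph over a hyperplane; feeding consecutive leaves of the limiting family into \cref{thm:main} forces $\del\Omega_w$ to be invariant under translation by $\R p$, so $w=w'\times\R$ for a $1$-homogeneous minimizer $w'$ in one lower dimension, again a graph. Iterating, one reaches a $1$-homogeneous minimizer whose free boundary is an entire Lipschitz graph over a hyperplane; by the regularity theory for Lipschitz one-phase free boundaries this graph is analytic, hence linear (being also $1$-homogeneous), hence a half-space solution, contradicting the singularity of $w$. Therefore $\sing(\uu)=\sing(\ou)=\emptyset$, the free boundaries are analytic radial graphs over $\Omega_{u_0}\cap\del B_1$ and $\del B_1\setminus\overline\Omega_{u_0}$, and (3) follows. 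Finally, the strict inequality in (3) makes the dilations $\{\lambda\del\Omega_{\uu}\}$ and $\{\lambda\del\Omega_{\ou}\}$ pairwise disjoint, and by property (5) and a limiting argument as $\lambda\to 0,\infty$ they exhaust $\R^d\setminus\del\Omega_{u_0}$. I expect this last step --- the dimension-reduction argument for (4) --- to be the main obstacle: one must check that at each stage the foliation hypothesis, via \cref{thm:main}, genuinely produces a translation-invariance direction and that the induction terminates at the flat case.
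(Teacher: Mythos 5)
Your overall strategy---build a one-sided minimizer, establish the radial monotonicity $L\uu:=-\uu+x\cdot D\uu\geq 0$, then upgrade to strictness and regularity---is the same as the paper's, but two of your steps differ substantially from the paper's proof, and both have real gaps.

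\textbf{Construction.} You pose a hard obstacle problem (forcing $u\equiv 0$ on $\overline{B_\rho}$) and assert that ``the obstacle is inactive as a PDE constraint.'' This is not automatic: the constrained minimizer $\uu_\rho$ is a free minimizer of $J$ only away from $\del B_\rho$, and a priori $\del\Omega_{\uu_\rho}$ could touch $\del B_\rho$, in which case the free boundary condition fails there and every subsequent application of \cref{thm:main} and \cref{cor:main2} to $\uu_\rho$ becomes illegitimate. Showing detachment is essentially equivalent to the separation one is trying to prove; you cannot take it for granted. The paper sidesteps this entirely by minimizing $J_{B_1}$ with the scaled boundary data $\gamma u_0|_{\del B_1}$ for $\gamma<1$ and no obstacle, so $v^\gamma$ is automatically a free minimizer. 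A lattice comparison gives $v^\gamma\le\gamma u_0$, and since $|D_\nu(\gamma u_0)|=\gamma\neq 1$ on $\reg(u_0)$ one immediately has $\reg(v^\gamma)\cap\reg(u_0)=\emptyset$; then \cref{thm:main} gives $\del\Omega_{v^\gamma}\cap\del\Omega_{u_0}\cap B_1=\emptyset$ and in particular $d(0,\Omega_{v^\gamma})>0$. The radial monotonicity $v^\gamma_{0,r}\le v^\gamma$ is proved by a sliding/continuity argument using \cref{cor:main2} (not by a lattice comparison of obstacle problems), and then $\uu$ is obtained as a blow-up of $v^{\gamma_i}$ at scale $r_{\gamma_i}=d(0,\Omega_{v^{\gamma_i}})\to 0$ as $\gamma_i\to 1$.

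\textbf{Regularity of the leaves.} Your Step 3 is the largest departure, and it has genuine gaps. You attempt a dimension-reduction argument: blow up at a putative singular point $p\in\del\Omega_{\uu}$, claim the dilated foliation limits to a foliation by translates, infer translation-invariance of the tangent cone by feeding consecutive leaves into \cref{thm:main}, and iterate. But (i) the radial-graph property does not by itself make $\del\Omega_w$ a Lipschitz graph over a hyperplane; (ii) \cref{thm:main} says touching free boundaries of distinct minimizers are disjoint--it does not produce a translation-invariance direction for a single cone; and (iii) it is unclear why the induction terminates at the half-space. You correctly flag this step as the main obstacle. The paper does something much shorter and which uses an ingredient you already have in hand: since $-\uu(y)+y\cdot D\uu(y)\ge 0$ a.e., rescaling at any $x\in\del\Omega_{\uu}$ shows that every tangent cone $w$ to $\uu$ at $x$ is a $1$-homogeneous global minimizer satisfying $x\cdot Dw\ge 0$ a.e.; this forces $\Omega_w\subset\{y:y\cdot x\ge 0\}$ and then (by eigenvalue monotonicity on the sphere) $\Omega_w$ is exactly that half-space, so $w(y)=(y\cdot\hat x)^+$ and $x\in\reg(\uu)$. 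No foliation or dimension reduction is needed. You observed $L\uu\ge 0$ in your Step~2 but did not notice that it closes the regularity argument directly.

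The rest---strictness of $L\uu$ via \cref{thm:harnack} on the Jacobi field, the blow-down identification (5) via eigenvalue monotonicity, and the assembly of the foliation from (1)--(5)---matches the paper in substance.
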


\begin{remark}\label{rem:fol}
Note that, unlike the case when $u_0$ is regular away from $0$ as considered in \cite{DeJeSh}, we do \emph{not} claim any uniqueness of the foliation generated by $\underline{u}, \overline{u}$.  We expect the foliation should be unique, like in \cite{DeJeSh}, in the sense that any minimizer lying to one side of $u_0$ should be a dilation of either $\underline{u}$ or $\overline{u}$, but this seems to be a much more subtle question.
\end{remark}

\subsection*{Outline of the proof and organization of the paper} The key technical tools in the proof of \cref{thm:main} are two relative isoperimetric inequalities (\cref{ss:2}), which allow us to deduce  Gagliardo-Nirenberg-Sobolev-type inequalities (\cref{ss:3}) and to develop a De\,Giorgi-Nash-Moser theory (\cref{ss:4}) for sub and supersolutions on domains $\Omega_u$, generated by minimizers  $u$ of the one-phase functional $J$.  To prove these we use ideas from \cite{BoGi} and \cite{Si_dec}.\medskip

Beyond the Harnack inequalities, our strategy of proof for \cref{thm:main} essentially follows the method of \cite{Si_max} (see \cref{ss:6}).  We assume that $u \neq v$ and $\reg(u) \cap \reg(v) = \emptyset$ but $\del \Omega_u \cap \del \Omega_v \cap U \neq \emptyset$, and derive a contradiction.  We first show using a dimension reduction argument that there is no loss in assuming that $U = B_1$ and $0 \in \del \Omega_u \cap \del\Omega_v$, and both $u$, $v$ have the same tangent cone at $0$ (for \emph{any} choice of rescalings).  This implies that the difference $u - v$ behaves like $o(r)$, and so by choosing a good sequence $r_i \to 0$ and suitable factors $\lambda_i$, we can find a blow-up $u_0$ of both $u$ and $v$ at $0$, and can take a limit of $\lambda_i^{-1}(v_{0, r_i} - u_{0, r_i})$ to obtain a positive Jacobi field $w$ on $\{ u_0 > 0\} \cap B_1$ which behaves like $O(r)$ as $r\to0$.  However, as $w$ is a positive (distributional) supersolution of the Neumann Laplacian (see \cref{ss:5}, \cref{ss:6}), that is, 
$$\Delta w\le 0\quad\text{and}\quad w\ge 0\quad\text{on}\quad \{ u_0 > 0\} \cap B_1,$$ the De\,Giorgi-Nash-Moser Harnack inequality implies that $w$ admits a uniform lower bound, contradicting the fact $w = O(r)$.

\medskip

In \cref{ss:1} we recall some useful facts about minimizers of the one-phase Bernoulli energy $J$. In \cref{ss:2} we prove a relative isoperimetric inequality and a relative Neumann-type isoperimetric inequality for compact domains in $\Omega_u$, $u$ a minimizer of $J$, and then use these in \cref{ss:3} to prove a Sobolev and Neumann-Sobolev inequality. \cref{ss:4} summarizes how these Sobolev inequalities imply the De\,Giorgi-Nash-Moser estimates. In \cref{ss:5} we show how sequences $u^\mu < v^\mu$ of minimizers to $J$ can be rescaled to obtain a Jacobi field on the limit, largely following work of \cite{DeJeSh}. Finally in \cref{ss:6}, \cref{ss:fol} we combine the results of the previous two sections to prove \cref{thm:main}, \cref{thm:fol}.

 \medskip
 
 \noindent\textbf{Acknowledgements.} N.E. thanks Stanford University and UC San Diego for their hospitality. L.S. has been partially supported by the N.S.F Career Grant DMS 2044954.  B.V. was supported by the European Research Council (ERC) and the European Union's programme Horizon 2020 through the project ERC VAREG - {\it Variational approach to the regularity of the free boundaries} \rm (No. 853404).
 
 \section{Preliminary results}\label{ss:1} 
 
 In this section we recall some facts about minimizers of the one-phase energy $J_U$.  Given a minimizer $u$ of $J_U$, we shall always write $\Omega_u = \{ u > 0\}$ for the positive set, and $u_{x, r}(y) := r^{-1} u(x + ry)$ for the scaled/translated function.  For a general function $f$ we write $f^+ = \max \{ f, 0\}$, and $f^- = - \min\{ f, 0 \}$.  For a set $A \subset \R^d$, write $d(x, A)$ for the Euclidean distance from $x$ to $A$.

 We start by recalling the standard compactness for minimizers of the one-phase problem. 
 
\begin{lemma}[Compactness of minimizers]\label{lem:comp}
Let $\{ u_i \in W^{1,2}_{loc}(B_1) \}_i$ be a sequence of minimizers of $J_{B_1}$, and suppose that $0 \in \del\Omega_{u_i}$ for all $i$.  Then after passing to a subsequence, we can find a $u \in W^{1,2}_{loc}(B_1)$ such that:
\begin{enumerate}[\quad\rm(1)]
\item $u_i \to u$ in $(C^\alpha_{loc} \cap W^{1,2}_{loc})(B_1)$ for all $\alpha < 1$;
\item the characteristic functions $1_{\Omega_{u_i}} \to 1_{\Omega_u}$ in $L^1_{loc}(B_1)$;
\item the free-boundaries $\del\Omega_{u_i} \to \del \Omega_u$ in the local Hausdorff distance in $B_1$;
\item $u$ minimizes $J_{B_1}$.
\end{enumerate}
\end{lemma}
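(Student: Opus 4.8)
The plan is to combine the a priori regularity theory for one-phase minimizers (uniform Lipschitz bounds and two-sided non-degeneracy at free boundary points) with minimality to upgrade convergence in all the listed topologies. First I would invoke the classical Alt--Caffarelli interior estimates (see \cite{AlCa}, or \cite[Theorem 1.2]{Bojo_ln}): every minimizer $w$ of $J_{B_1}$ is locally Lipschitz with $\|Dw\|_{L^\infty(B_r)}\le C(d,r)$ independent of $w$, and satisfies the non-degeneracy $c\rho\le \sup_{B_\rho(x)}w$ together with the density estimates $|\Omega_w\cap B_\rho(x)|\ge c|B_\rho|$ and $|B_\rho(x)\setminus\Omega_w|\ge c|B_\rho|$ for $x\in\partial\Omega_w$ and $\rho$ small. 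Since $u_i(0)=0$, the equi-Lipschitz bound makes the $u_i$ uniformly bounded on compact subsets of $B_1$, so Arzel\`a--Ascoli yields a subsequence with $u_i\to u$ in $C^\alpha_{loc}(B_1)$ for every $\alpha<1$, with $u\ge 0$ locally Lipschitz. The uniform Dirichlet bound gives weak $W^{1,2}_{loc}$ convergence; to make it strong, I would fix $U'\subset\subset B_1$, compare $u_i$ with a competitor agreeing with $u$ on $U'$ and interpolating to $u_i$ on a thin collar, use $J_{U'}(u_i)\le J_{U'}(\text{competitor})$ together with the trivial bound $|\Omega_{u_i}\cap U'|\le|U'|$ and lower semicontinuity of the Dirichlet energy to conclude $\int_{U'}|Du_i|^2\to\int_{U'}|Du|^2$, hence strong $W^{1,2}_{loc}$ convergence — this is item (1).

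For items (2) and (3) I would use the density estimates. From $C^0$ convergence one gets $1_{\Omega_u}\le\liminf 1_{\Omega_{u_i}}$ a.e.; the reverse inclusion in measure uses the uniform lower density of $B_\rho(x)\setminus\Omega_{u_i}$, which prevents the sets $\Omega_{u_i}$ from filling in around a point where $u=0$. Combined with $|\partial\Omega_u|=0$ (again a consequence of minimality), this yields $1_{\Omega_{u_i}}\to 1_{\Omega_u}$ in $L^1_{loc}(B_1)$. The local Hausdorff convergence $\partial\Omega_{u_i}\to\partial\Omega_u$ is then immediate from the two-sided non-degeneracy: the upper density bound on $\Omega_{u_i}$ keeps $\partial\Omega_{u_i}$ from having points far from $\partial\Omega_u$, while non-degeneracy of $u$ (and lower density of $B_\rho\setminus\Omega_u$) keeps $\partial\Omega_u$ from having points far from $\partial\Omega_{u_i}$.

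Finally, for item (4), given $w=u+v$ with $v\in W^{1,2}_0(U')$, $U'\subset\subset B_1$, I would build admissible competitors $w_i$ for $u_i$ that equal $u_i+v$ on $U'$ and interpolate to $u_i$ on a thin collar, apply $J_{B_1}(u_i)\le J_{B_1}(w_i)$, and pass to the limit: the Dirichlet terms pass by the strong $W^{1,2}_{loc}$ convergence, and the measure terms $\int 1_{\Omega_{u_i}}$, $\int 1_{\Omega_{w_i}}$ pass by the $L^1_{loc}$ convergence of characteristic functions from item (2), with the collar contribution sent to zero. The main obstacle — and the only genuinely non-formal point — is precisely the measure term $\int 1_{\{u>0\}}$: it is not continuous, nor lower semicontinuous in the direction one needs, under $C^\alpha$ or weak $W^{1,2}$ convergence alone, so the argument rests entirely on the quantitative non-degeneracy and the $|\partial\Omega|=0$ property of minimizers, which force the $L^1_{loc}$ convergence of $1_{\Omega_{u_i}}$ and thereby let the measure term pass to the limit both in the semicontinuity step and in the competitor construction. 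Everything else is routine given the Alt--Caffarelli a priori estimates.
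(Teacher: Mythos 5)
The paper proves this lemma by a pure citation to \cite[Lemmas 3.2, 3.4, Section 4.7]{AlCa}. Your sketch reconstructs the content of those references using the same standard ingredients --- uniform local Lipschitz estimates, non-degeneracy, two-sided density bounds at free boundary points, and minimality comparison with glued competitors --- so you are following the same route, not a different one.

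One detail is slightly off and worth fixing. In the strong $W^{1,2}_{loc}$ step, the ``trivial bound $|\Omega_{u_i}\cap U'|\le|U'|$'' is not the input that closes the argument. The competitor comparison gives, after sending the collar width to zero,
\[
\limsup_i\Bigl(\int_{U'}|Du_i|^2+|\Omega_{u_i}\cap U'|\Bigr)\le\int_{U'}|Du|^2+|\Omega_u\cap U'|,
\]
and to extract $\int_{U'}|Du_i|^2\to\int_{U'}|Du|^2$ from this you need \emph{both} lower semicontinuity of the Dirichlet term \emph{and} the one-sided measure inequality $|\Omega_u\cap U'|\le\liminf_i|\Omega_{u_i}\cap U'|$. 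The latter is elementary --- Fatou together with the observation that $u(x)>0$ forces $u_i(x)>0$ for $i$ large, via the $C^0_{loc}$ convergence already established --- but logically it must precede the strong $W^{1,2}$ conclusion, rather than be deferred to item (2). As written, your items (1) and (2) are slightly entangled. Once the Fatou step is moved forward, the argument is complete: the density estimates then supply the reverse measure inclusion to finish (2) and (3), and (4) follows by the glued-competitor argument as you describe, noting that on the collar one has $\Omega_{w_i}\subset\Omega_u\cup\Omega_{u_i}$ with $|\Omega_{u_i}\triangle\Omega_u|\to0$ by (2), while inside $U'$ the competitor equals $u+v$ independently of $i$.
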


\begin{proof}
This is proven in \cite[Lemmas 3.2, 3.4, and Section 4.7]{AlCa}.
\end{proof}

In order to prove the desired isoperimetric inequalities in \cref{ss:2}, we will also need the following density bounds.
 
 \begin{lemma}[Density bounds]\label{lem:ahlfors}
There exists a dimensional constant $\beta=\beta(d) > 0$ so that if $u \in W^{1,2}(B_2)$ minimizes $J_{B_2}$, $0 \in \overline{\Omega_u}$, then
\begin{equation}\label{eqn:ahlfors-concl1}
\haus^{d-1}(\del\Omega_u \cap B_1) \leq \omega_{d-1} \beta^{d-1},
\end{equation}
and if $\Omega'$ is any connected component of $\Omega_u \cap B_2$ satisfying $0 \in \overline{\Omega'}$, then
\begin{equation}\label{eqn:ahlfors-concl2}
\haus^{d}(\Omega' \cap B_1) \geq \frac{\omega_d}{\beta^d}.
\end{equation}
In fact, we can find a ball $B_{\beta^{-1}}(y) \subset \Omega' \cap B_1$ in which $u \geq 1/\beta$.
\end{lemma}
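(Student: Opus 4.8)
The plan is to prove the "in fact" statement first, since both \eqref{eqn:ahlfors-concl1} and \eqref{eqn:ahlfors-concl2} follow from it (or from closely related arguments). I would argue by compactness and contradiction: suppose no such uniform $\beta$ works for the claim that $\Omega'\cap B_1$ contains a ball $B_{\beta^{-1}}(y)$ on which $u\ge 1/\beta$. Then there is a sequence of minimizers $u_i$ of $J_{B_2}$ with $0\in\overline{\Omega_{u_i}}$ such that the largest ball contained in the relevant connected component $\Omega'_i$ and inside $B_1$, on which $u_i\ge \eps_i$, has radius shrinking; equivalently, the components $\Omega'_i$ become "thin." After translating so that $0\in\del\Omega_{u_i}$ (the density-type statements are local and the case $0$ in the interior is easier, handled separately via the usual interior non-degeneracy $\sup_{B_r(0)}u\ge c r$), I would apply \cref{lem:comp} to extract a limit minimizer $u$. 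The contradiction should come from the fact that the limit $u$ is still a nontrivial minimizer with $0\in\del\Omega_u$, hence satisfies the standard Alt--Caffarelli non-degeneracy $\sup_{B_r(0)} u\ge c_d\, r$, which forces a definite-size ball on which $u$ is bounded below; by $C^0_{loc}$ convergence and $L^1_{loc}$ convergence of the characteristic functions this ball eventually sits inside a single component of $\Omega_{u_i}\cap B_2$ containing $0$ in its closure and has $u_i$ bounded below there.

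For the perimeter bound \eqref{eqn:ahlfors-concl1}, I would use the standard measure-theoretic consequence of minimality: testing $J_{B_2}$ against the competitor $\min\{u, \mathrm{dist}(\cdot,\del B_{3/2})/M\}$ or, more cleanly, invoking the well-known fact (\cite{AlCa}) that $\del\Omega_u$ has locally finite perimeter with the Ahlfors-regularity bound $\haus^{d-1}(\del\Omega_u\cap B_r(x))\le C_d r^{d-1}$ for $x\in\del\Omega_u$, $B_r(x)\subset B_2$. Since this is precisely the content of \cite[Section 4.5]{AlCa} (finite perimeter and upper density bounds), I would cite it and absorb the constant into $\beta$. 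The lower volume bound \eqref{eqn:ahlfors-concl2} then follows from the ball found in the "in fact" statement: $\haus^d(\Omega'\cap B_1)\ge \haus^d(B_{\beta^{-1}}(y))=\omega_d\beta^{-d}$, after possibly enlarging $\beta$ to ensure $B_{\beta^{-1}}(y)\subset B_1$ genuinely lies in $\Omega'$.

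The main obstacle is bookkeeping the connectedness: the non-degeneracy inequality gives a point where $u_i$ is large near $0$, but one must ensure the definite-size ball produced lies in the \emph{same} connected component whose closure contains $0$. I would handle this by a chaining argument along a path in $\Omega_{u_i}$ from near $0$ to the good point (such a path exists in the component $\Omega'_i$ by definition, since $0\in\overline{\Omega'_i}$ and $u_i$ is harmonic hence positive throughout $\Omega'_i$), combined with the Harnack inequality for the harmonic function $u_i$ on $\Omega'_i$ to propagate the lower bound along a uniformly thick tube — or, more economically, by noting that the limit component is open and connected and applying the Hausdorff and $L^1_{loc}$ convergence of \cref{lem:comp}(2)–(3) to transfer the ball back to finite $i$. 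A secondary subtlety is the borderline case $0\in\overline{\Omega_u}\setminus\del\Omega_u$, i.e. $0$ in the interior of $\Omega_u$, where $u(0)>0$ and the statement is immediate by continuity with any $\beta$ large enough depending on $u(0)$ — but since $\beta$ must be dimensional, one reduces to $0\in\del\Omega_u$ by translating to a nearby free boundary point, using that $d(0,\del\Omega_u)$ controls $u(0)$ from above via interior gradient estimates (Lipschitz bound).
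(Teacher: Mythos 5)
The two easy parts of your proposal match the paper: the upper bound \eqref{eqn:ahlfors-concl1} is indeed obtained by citing \cite{AlCa}, and once one has the ball $B_{\beta^{-1}}(y)\subset\Omega'\cap B_1$, the volume lower bound \eqref{eqn:ahlfors-concl2} is immediate. The issue is the core ``in fact'' claim, where your compactness argument has a genuine gap and where the paper in fact takes a completely different (direct, non-compactness) route.

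The gap is exactly the one you flag and then do not resolve: the Alt--Caffarelli non-degeneracy $\sup_{B_r(0)}u \ge c_d r$ is a statement about the \emph{full} positivity set of the minimizer, not about a prescribed connected component. When you apply it to the limit $u$ and find a ball $B_\rho(y_0)$ on which $u$ is bounded below, that ball lies in \emph{some} connected component of $\Omega_u$, and after transfer to finite $i$ it lies in \emph{some} component of $\Omega_{u_i}$ --- but nothing forces that component to be $\Omega'_i$. Your proposed fixes do not close this. The ``chaining along a path in $\Omega_{u_i}$ from near $0$ to the good point'' presupposes precisely what needs to be shown, namely that $y_0$ and the points of $\Omega'_i$ near $0$ are joined by a path inside $\Omega_{u_i}$, i.e.\ that $y_0 \in \Omega'_i$; there is no ``uniformly thick tube'' under the contradiction hypothesis, which asserts precisely that $\Omega'_i$ is thin. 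The alternative (``note the limit component is open and connected and transfer back'') implicitly invokes that $0$ lies in the closure of a \emph{unique} component, which is the content of the remark following \cref{lem:conn} --- and the proof of \cref{lem:conn} uses \cref{lem:ahlfors}. That route is circular.

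The paper avoids all of this with a short direct argument. It observes that for any $v\in W^{1,2}(B_2)$ with $v|_{\del B_2}=u|_{\del B_2}$, the modified competitor $v'$, equal to $u$ off $\Omega'$ and to $\min\{u,v\}$ on $\Omega'$, is admissible and agrees with $u$ off $\Omega'$; comparing energies localizes the minimality to the component $\Omega'$. Consequently the proof of \cite[Lemma 3.4]{AlCa} runs verbatim for $u\cdot 1_{\Omega'}$, giving $\sup_{\Omega'\cap B_{1/2}}u\ge 1/c(n)$ \emph{for the given component}, and the Lipschitz bound then produces the ball $B_{\beta^{-1}}(y)\subset\Omega'\cap B_1$ with $u\ge 1/\beta$. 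This ``restrict-minimality-to-the-component'' competitor trick is the missing ingredient in your argument; without it, no compactness scheme can track which component the non-degeneracy ball lands in.
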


\begin{proof}
The upper bound \eqref{eqn:ahlfors-concl1} follows from \cite[Theorem 4.5(3)]{AlCa} (or \cite[Corollary 5.8]{Bojo_ln}).  The lower bound \eqref{eqn:ahlfors-concl2} follows from the Lipschitz nature of $u$ and a minor modification of \cite[Lemma 3.4]{AlCa} (or \cite[Lemma 5.1(d)]{Bojo_ln}).  Specifically, observe that if $v \in W^{1,2}(B_2)$ satisfies $v|_{\del B_2} = u|_{\del B_2}$, then the function
\[
v'(x) = \left\{ \begin{array}{l l} u(x) & x \not\in \Omega' \\ \min\{ u(x), v(x)\} & x \in \Omega' \end{array} \right.
\]
also lies in $W^{1,2}(B_2)$ and agrees with $u$ on $\del B_2$.  Therefore we have the inequality
\[
J_{\Omega'}(v') \leq J_{\Omega'}(u).
\]
Since we also have $u \cdot 1_{\Omega'} \in W^{1,2}(B_2)$, we can therefore apply the same proof of \cite[Lemma 3.4]{AlCa} to $u|_{\Omega'}$ in place of $u$ to deduce
\[
\sup_{\Omega' \cap B_{1/2}} u \geq 1/c(n).
\]
Since (by \cite[Corollary 3.3]{AlCa}) we also have $||Du||_{L^\infty(B_{1})} \leq c(n)$, it follows that we can find a $y \in \Omega' \cap B_{1/2}$ and a $\beta(n) \geq 4$ so that $u \geq 1/\beta$ on $B_{1/\beta}(y)$, which concludes the proof of the lower bound \eqref{eqn:ahlfors-concl2}.
\end{proof}

A general minimizer $u$ on some bounded open domain $U$ might have numerous connected components of $\Omega_u$.  However if $u$ is a $1$-homogenous and $U=\R^d$, then $\Omega_u$ must be connected, essentially due to the fact that any eigenfunction on the sphere $S^{d-1}$ with eigenvalue $(d-1)$ must be the restriction of a linear function.  This implies the following connectivity result for global minimizers, which is analogous to \cite[Theorem 1]{BoGi}.
\begin{theorem}\label{lem:conn}
Let $u \in W^{1,2}_{loc}(\R^d)$ be a global minimizer of $J_{\R^d}$.  Then $\Omega_u$ is connected.
\end{theorem}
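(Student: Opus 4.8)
The overall strategy is a contradiction argument combined with the homogeneity rigidity. Suppose $\Omega_u$ has (at least) two distinct connected components, call them $\Omega_1$ and $\Omega_2$. The first key step is a reduction to the homogeneous case: by the monotonicity formula (Weiss), a global minimizer has a tangent cone at $0$, but here I want to use homogeneity at infinity. Actually, let me think about whether $u$ is assumed $1$-homogeneous — re-reading the statement, it says only "global minimizer", so I should not assume homogeneity a priori. So the plan must be: blow down at infinity to produce a $1$-homogeneous global minimizer $u_\infty$, and relate the connectivity of $\Omega_u$ to that of $\Omega_{u_\infty}$; or, alternatively, argue directly. Let me reconsider.

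The cleanest approach follows Bombieri–Giusti: assume $\Omega_1, \Omega_2$ are two components. On each, $u$ restricted is a minimizer (as in the proof of Lemma \ref{lem:ahlfors}, one can replace $u$ on one component without affecting the others). The crucial observation is a "no two half-space-like pieces" argument. First I would establish that each component $\Omega_i$ is unbounded: if a component were bounded, then $u$ would be a nonnegative harmonic function on a bounded set vanishing on the boundary, forcing $u \equiv 0$ there, contradiction. Next, for each component consider the rescalings $u_i$ restricted to $\Omega_i$ and blow down: by compactness (Lemma \ref{lem:comp}) and the monotonicity formula, passing to a subsequence $u_{0,r_k} \to u_\infty$ a $1$-homogeneous global minimizer, and under this limit $1_{\Omega_1}$ and $1_{\Omega_2}$ converge (in $L^1_{loc}$) to characteristic functions of subsets of $\Omega_{u_\infty}$. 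The point is that the blow-down "sees" both components as nonempty open pieces, so $\Omega_{u_\infty}$ would be disconnected — unless one of the blown-down pieces is empty, i.e. has density zero at infinity, which the density lower bound Lemma \ref{lem:ahlfors}\eqref{eqn:ahlfors-concl2} rules out (a component containing $0$ in its closure has definite density at every scale). So it suffices to treat the $1$-homogeneous case.

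For the $1$-homogeneous case: if $u$ is $1$-homogeneous and $\Omega_u$ is disconnected, then $\Omega_u \cap S^{d-1}$ is a disjoint union of (at least two) nonempty relatively open sets $\omega_1, \omega_2, \dots$. On $\Omega_u$, $u$ is harmonic and $1$-homogeneous, so writing $u(r\theta) = r\,\phi(\theta)$, the trace $\phi$ satisfies $\Delta_{S^{d-1}}\phi = -(d-1)\phi$ on each $\omega_j$ with $\phi = 0$ on $\partial\omega_j$ and $\phi > 0$ inside; i.e. each $\omega_j$ supports a first Dirichlet eigenfunction with eigenvalue exactly $d-1 = \lambda_1(S^{d-1})$. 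But the Faber–Krahn / domain monotonicity for the first eigenvalue on the sphere forces each $\omega_j$ to be a half-sphere (the only domains with $\lambda_1 = d-1$ are hemispheres, since a proper subdomain of a hemisphere has strictly larger first eigenvalue, and a domain strictly containing a hemisphere would have first eigenvalue $< d-1$). Two disjoint hemispheres are possible only if they are complementary, $\omega_1 = \{\theta_d > 0\}$, $\omega_2 = \{\theta_d < 0\}$, and then $u(x) = |x_d|$. But $u(x) = |x_d|$ is \emph{not} a minimizer of $J$: the two-plane solution has larger energy than the half-plane solution (one checks, e.g. via the optimality of the half-plane among one-homogeneous minimizers, or directly by a competitor filling the slab), contradiction. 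Hence $\Omega_u$ is connected.

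The step I expect to be the main obstacle is the \textbf{blow-down reduction} linking connectivity of $\Omega_u$ to connectivity of $\Omega_{u_\infty}$: one must argue carefully that two \emph{distinct} components of $\Omega_u$ cannot merge into one component in the blow-down limit. This requires more than the density bound — it needs a quantitative separation, for instance showing that the portion of $\partial\Omega_u$ separating $\Omega_1$ from $\Omega_2$ persists in the Hausdorff limit (using item (3) of Lemma \ref{lem:comp}), together with the fact that a component with a point of $\partial\Omega_u$ in its closure has uniformly positive density at all scales (Lemma \ref{lem:ahlfors}) so it cannot vanish. An alternative route that sidesteps the blow-down entirely is to run the eigenvalue argument directly at finite scales via a logarithmic-cutoff / frequency argument à la \cite{BoGi}, showing that if $\Omega_u$ had two components then on large spheres each would carry nearly an eigenfunction with eigenvalue close to $d-1$, pinning them down to near-hemispheres and yielding the same contradiction with minimality; I would pursue whichever of these the authors find cleaner, but I flag the "components don't merge" issue as the technical heart.
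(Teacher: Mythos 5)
Your treatment of the $1$-homogeneous case takes a different (valid) spectral route. The paper forms the combination $z_1 - a z_2$ (with $a$ chosen to kill the mean on $S^{d-1}$), observes it realizes the first nontrivial Rayleigh quotient of the whole sphere with eigenvalue $d-1$, hence is the restriction of a linear function; this pins $u = \alpha x_d^+ + \beta x_d^-$, which is then shown not to minimize. You instead appeal to spherical Faber--Krahn. This works, but note your parenthetical justification (domain monotonicity against hemispheres) is not by itself sufficient, since a spherical domain need not be comparable to any hemisphere; you genuinely need the Faber--Krahn inequality, together with the observation that among two disjoint domains on $S^{d-1}$ at least one has measure at most $|S^{d-1}_+|$, and then iterate. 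With that supplied the step is fine.

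The serious issue is the reduction from a general global minimizer to the homogeneous one. You want to blow down, argue that $\Omega_{u_\infty}$ inherits the disconnectedness of $\Omega_u$, and then invoke the homogeneous case; you correctly flag the possibility that the two components ``merge'' in the blow-down as the obstacle, and propose to rule merging out. But this plan is structurally self-defeating: by the homogeneous case you have just established, \emph{every} $1$-homogeneous global minimizer has connected positivity set, so the blow-down is \emph{always} connected and the two components \emph{always} merge. Neither of your suggested fixes (Hausdorff persistence of a separating boundary, or a frequency-type argument at large finite scales) can succeed, because the statement they would be aimed at proving is false. The paper's argument turns the merging into the weapon rather than the obstacle: using the density lower bound one finds balls $B_{1/\beta}(y_{ik}) \subset r_k^{-1}\Omega_i \cap B_2$ on which $u_{0,r_k} \geq 1/\beta$, with $y_{ik} \to y_i \in \Omega_{u_0}$; since $\Omega_{u_0}$ is connected there is a path $\gamma$ in $\Omega_{u_0}\cap B_2$ joining $y_1$ to $y_2$; local uniform convergence of $u_{0,r_k}$ to $u_0$ (which is strictly positive on the compact set $\gamma([0,1])$) forces $\gamma([0,1]) \subset r_k^{-1}\Omega_u$ for $k$ large, and since also $y_i \in B_{1/\beta}(y_{ik}) \subset r_k^{-1}\Omega_i$, one has produced a connected subset of $\Omega_u$ meeting both $\Omega_1$ and $\Omega_2$, an immediate contradiction. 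This is the idea missing from your outline.
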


\begin{remark}
The same proof (taking $r_k \to 0$ instead of $\to \infty$) implies that if $u \in W^{1,2}(B_1)$ minimizes $J_{B_1}$, then for any $p \in B_1$ there is at most one connected component of $\Omega_u$ whose closure contains $p$.
\end{remark}

\begin{proof}
We first prove the Theorem for $u$ being $1$-homogenous.  In this case the argument is similar to \cite[Lemma 2.2]{DeSpVe}.  Indeed, suppose by contradiction $\Omega_{u}$ has two non-empty disjoint connected components $\Omega_1, \Omega_2$.  Since $u$ is $1$-homogenous and solves $\Delta u = 0$ in $\Omega_{u}$, we can write $u(r \theta) = r z(\theta)$, where $z \in W^{1,2}_0(\Omega_{u} \cap \del B_1)$ solves
\begin{equation}\label{eqn:conn-1}
\Delta_{S^{d-1}} z + (d-1) z = 0 \text{ on } \Omega_{u} \cap \del B_1.
\end{equation}
Write $z_i = z|_{\Omega_i}$, so that each $z_i$ is a non-negative Dirichlet eigenfunction of the spherical Laplacian $\Delta_{S^{d-1}}$ on $\Omega_i \cap \del B_1$ with eigenvalue $d-1$.

Choose $a > 0$ so that
\[
\int_{\del B_1} (z_1 - a z_2) d\haus^{d-1} = 0,
\]
and then observe that by \eqref{eqn:conn-1} and an integration by parts we have
\[
\int_{\del B_1} |D_{\theta}(z_1 - a z_2)|^2 d\haus^{d-1} = (d-1) \int_{\del B_1} |z_1 - az_2|^2 d\haus^{d-1}.
\]
That is, $z_1 - az_2$ is a first (non-trivial) eigenfunction of $\del B_1$, and hence must be the restriction to $\del B_1$ of a linear function.  After a rotation, we deduce $u$ must take the form 
\[
u = \alpha x_d^+ + \beta x_d^-
\]
for some $\alpha, \beta > 0$.  But now $\haus^d(\Omega_u) = 0$, and $u$ is not itself harmonic, and so if $v$ is the harmonic extension of $u|_{\del B_1}$ to $B_1$ we have $J_{B_1}(v) < J_{B_1}(u)$, contradicting minimality of $u$.  This proves \cref{lem:conn} when $u$ is $1$-homogenous.

Now take a general $u$ as in the statement of the Theorem, and suppose, towards a contradiction, there are two disjoint, non-empty connected components $\Omega_1, \Omega_2 \subset \Omega_u$.  Pick any sequence $r_k \to \infty$.  For $k >> 1$ and $i = 1, 2$, $r_k^{-1}\Omega_i \cap B_{1/100} \neq \emptyset$, and therefore by \cref{lem:ahlfors} we can find balls $B_{1/\beta}(y_{ik}) \subset r_k^{-1} \Omega_i \cap B_2$ on which $u \geq 1/\beta$.

Passing to a subsequence, by standard compactness (\cref{lem:comp}) and the Weiss monotonicity formula, we can assume there is a $1$-homogeneous $u_0 \in W^{1,2}_{loc}(\R^d)$, minimizing $J_{\R^d}$, so that $u_{0, r_k} \to u_0$ in $C^\alpha_{loc}$.  By our choice of $y_{ik}$ and the $C^0_{loc}$ converge of the $u_{0, r_k}$, after passing to a further subsequence can additionally assume that $y_{ik} \to y_i \in \Omega_{u_0} \cap B_2$, for each $i = 1, 2$.

By Step 1 there is a path $\gamma : [0, 1] \to \Omega_{u_0} \cap B_2$ connecting $y_1$ to $y_2$.  By the $C^0_{loc}$ convergence of the $u_{0, r_k}$, we deduce that $\gamma([0, 1]) \subset r_k^{-1} \Omega_u$ for $k >> 1$.  Provided $k >> 1$ so that, additionally, each $y_i \in B_{1/\beta}(y_{ik})$, we deduce there is a path in $r_k^{-1} \Omega_u$ connecting $y_{1k}$ to $y_{2k}$.  This is a contradiction, and finishes the proof of \cref{lem:conn}.
\end{proof}

We will also need the following property of global minimizers.

 \begin{lemma}\label{lem:H}
 	Let $u \in W^{1,2}_{loc}(\R^d)$ be a global minimizer for $J_{\R^d}$. Then $\sup_{\Omega_u} |Du| = 1$.  As a consequence, if $H$ is the mean scalar curvature of $\reg(u)$ with respect to the outer unit normal, then $H \leq 0$ (and $H < 0$ if $u$ is not linear).
 \end{lemma}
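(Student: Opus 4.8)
The plan is to establish $\sup_{\Omega_u}|Du|=1$ in two steps: the inequality $\sup_{\Omega_u}|Du|\le 1$ is a standard consequence of the optimal Lipschitz bound for minimizers (indeed, $|Du|\le 1$ holds $\mathcal H^{d-1}$-a.e. on $\reg(u)$ by the Neumann condition $D_\nu u=-1$, and one propagates this to all of $\Omega_u$ via, e.g., the well-known fact that $|Du|^2$ is subharmonic in $\Omega_u$ together with the boundary behavior — this is exactly \cite[Corollary 3.3]{AlCa} in its sharp form, or can be derived from a blow-up argument using that every blow-up of a minimizer is a half-plane solution or a minimizing cone on which the gradient bound is explicit). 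For the reverse inequality $\sup_{\Omega_u}|Du|\ge 1$: near any regular free boundary point $x_0\in\reg(u)$ the Neumann condition gives $|Du(x_0)|=1$ by smoothness of $u$ up to $\reg(u)$ there, so the supremum is at least $1$. Combining the two gives equality.

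For the curvature statement, I would argue as follows. Fix $x_0\in\reg(u)$ and work in a neighborhood where $\del\Omega_u$ is smooth. Since $\Delta u=0$ in $\Omega_u$ and $u=0$ on $\reg(u)$ with $D_\nu u=-1$, differentiating the boundary conditions along $\reg(u)$ is the classical computation (see e.g. the derivation of the linearized/stability operator for the one-phase problem): one obtains on $\reg(u)$ the identity
\[
\del_\nu\big(|Du|^2\big) = -2H
\]
where $H$ is the mean curvature of $\reg(u)$ with respect to the outer normal $\nu$ (with the sign convention that $H>0$ for, say, spheres with $\Omega_u$ the bounded component). Now the function $w:=1-|Du|^2$ satisfies $w\ge 0$ in $\Omega_u$ (by the gradient bound just proven), $w=0$ on $\reg(u)$, and $\Delta w = -2|D^2u|^2\le 0$ in $\Omega_u$, i.e. $w$ is a nonnegative superharmonic function vanishing on $\reg(u)$. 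By the Hopf lemma applied at $x_0$ (valid since $\reg(u)$ is smooth and $\Omega_u$ satisfies an interior ball condition there), either $w\equiv 0$ in the connected component of $\Omega_u$ touching $x_0$, or $\del_\nu w(x_0)<0$; but $\del_\nu w = -\del_\nu(|Du|^2) = 2H$ at $x_0$, so this says $H(x_0)\le 0$ always, with $H(x_0)<0$ unless $w\equiv0$ near $x_0$. In the case $w\equiv 0$ one has $|Du|\equiv 1$ in that component, hence $D^2u\equiv 0$ there, so $u$ is affine on that component; since $u=0$ on the free boundary, $u$ is linear (a half-plane solution), and then $H\equiv 0$. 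This gives $H\le 0$ in all cases and $H<0$ wherever $u$ is not linear.

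The main obstacle, and the point deserving the most care, is the sign bookkeeping in the identity $\del_\nu(|Du|^2)=-2H$: one must pin down the curvature sign convention (the statement says "mean scalar curvature of $\reg(u)$ with respect to the outer unit normal") consistently with the orientation of $\nu$ and the Neumann datum $D_\nu u=-1$, and verify the computation using $\Delta u=0$ together with tangential differentiation of $u|_{\reg(u)}=0$ — concretely, writing $u$ in Fermi coordinates near $\reg(u)$, expanding $u = -t + \tfrac12 H t^2 + O(t^3)$ in the signed distance $t$ to the boundary (this is forced by $u=0$, $D_\nu u=-1$, $\Delta u=0$), and reading off $|Du|^2 = 1 - H t + O(t^2)$, whence $\del_\nu(|Du|^2)|_{t=0} = -H$ up to the normalization of $H$ as the sum versus average of principal curvatures; I would state which normalization is used. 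The gradient bound $|Du|\le1$, while standard, should also be cited precisely since the whole argument rests on $w\ge0$; invoking \cite{AlCa} or \cite{Bojo_ln} for the sharp Lipschitz constant is the cleanest route.
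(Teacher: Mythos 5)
The second half of your argument (the curvature statement via the Hopf lemma applied to $w = 1 - |Du|^2$, with $\partial_\nu(|Du|^2) = -H$ on $\reg(u)$) is essentially what the paper does, and your treatment of the degenerate case $w\equiv0$ and your care about the sign of $H$ are sound and if anything more explicit than the paper's. The problem is in the first half.

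The inequality $\sup_{\Omega_u}|Du|\le 1$ is the real content of the lemma, and you are treating it as something that can be looked up. It cannot, at least not from the sources you point to. \cite[Corollary 3.3]{AlCa} gives $\|Du\|_{L^\infty}\le C(d)$ with a non-sharp dimensional constant; there is no ``sharp form'' of that corollary to cite. The route you sketch in parentheses---subharmonicity of $|Du|^2$ plus the boundary datum $|Du|=1$ on $\reg(u)$---does not by itself close, because $\Omega_u$ is unbounded and $\partial\Omega_u$ may contain a singular set, so the maximum principle requires control of $|Du|^2$ both at infinity and near $\sing(u)$, neither of which is automatic. Your alternative blow-up suggestion is circular: you would be invoking the very gradient bound on minimizing cones that you are trying to prove. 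The paper's proof is a genuinely different and self-contained argument: it defines $\Lambda$ as the supremum of $\sup_{\Omega_u}|Du|$ over \emph{all} global minimizers, and, assuming $\Lambda>1$, extracts a maximizing sequence of minimizers $u_i$, translates/dilates so that the near-extremal point is at unit distance from the free boundary, passes to a limit by compactness, and finds an interior maximum of the subharmonic function $|Du|^2$; the strong maximum principle then forces $|Du|^2$ to be constant, hence $u=x_d^+$ and $\Lambda=1$, a contradiction. This compactness-and-normalization step is exactly what substitutes for the missing ``propagation to all of $\Omega_u$'' in your write-up, and it is what makes the sharp constant $1$ provable rather than merely plausible. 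If you want your proof to stand, you need to supply an argument of this type (or a precise reference that actually proves the sharp global bound), rather than appealing to the subharmonicity heuristic.
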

 
 \begin{proof}
 	Define
 	\[
 	\Lambda = \sup \left\{ \sup_{\Omega_u} |Du| : \text{ $u \in W^{1,2}_{loc}(\R^d)$ a global minimizer of $J_{\R^d}$} \right\},
 	\]
 	and notice that, since $|Du| = 1$ on $\reg(u)$, we have that $\Lambda \geq 1$.
 	
 	Suppose, towards a contradiction, that $\Lambda > 1$.  Then, there is a sequence of global minimizers $u_i \in W^{1,2}_{loc}(\R^d)$ and points $x_i \in \Omega_{u_i}$ so that $|Du_i(x_i)| \to \Lambda$.  Let $y_i \in \del\Omega_{u_i}$ realize $d(x_i, \del\Omega_{u_i})$.  After a translation/rotation/dilation, since $|Du|$ is scale-invariant, we can assume $x_i = e_d$ and $y_i = 0$.
 	
 	Passing to a subsequence, by \cref{lem:comp} we can assume there is a $u \in W^{1,2}_{loc}(\R^d)$ minimizing $J_{\R^d}$ so that $u_i \to u$ in $(C^\alpha_{loc} \cap W^{1,2}_{loc})(\R^d)$, and $\del \Omega_{u_i} \to \del \Omega_{u}$ in the local Hausdorff distance, and $u_i \to u$ in $C^\infty_{loc}(\Omega_u)$.  Since $d(e_d, \del \Omega_{u_i}) = 1$, we have $d(e_d, \del \Omega_{u}) = 1$.  So $e_d \in \Omega_{u}$ and $|Du(e_d)| = \Lambda$.  (Note this implies $\Lambda < \infty$).  On the other hand, $|Du| \leq \Lambda$.  Therefore $e_d$ is an interior maximum for $|Du|^2$.
 	
 	Since $\Delta |Du|^2 \geq 0$, $|Du|^2$ must be locally constant, and hence $u = x_d^+$.  This implies $|Du(e_d)| = 1 < \Lambda$, which is a contradiction and concludes the proof of the first claim of the lemma. We are now in position to prove the second assertion of the Lemma. By the previous one, we have that  
 	$$\Delta |Du|^2 \geq 0\quad\text{and}\quad |Du| \leq 1\quad\text{in}\quad \Omega_u\,.$$
 	On the other hand, on the regular part of the free boundary, we have: 
 	 	$$|Du| = 1\quad\text{and}\quad D_\nu |Du|^2 = -H\quad\text{on}\quad \reg(u)\,,$$
so the conclusion follows from the Hopf lemma.
 \end{proof}

Finally we recall the following $\eps$-regularity theorem due to Alt-Caffarelli \cite{AlCa}, which we state in the version of De Silva \cite{desilva}.

\begin{theorem}[Alt-Caffarelli $\eps$-regularity]\label{thm:ac-reg}
Given $\eps > 0$, there is a $\delta>0$, depending on $\eps, d$, such that if $u \in W^{1,2}(B_1)$ is a minimizer of $J_{B_1}$ and 
\begin{equation}\label{eqn:flat}
||u - x_d^+||_{L^\infty(B_1)} < \delta\,,
\end{equation} then $u \in C^\infty( B_{1-\eps} \cap \overline{\{u > 0\}})$ and there is a $C^\infty$ function 
$$\xi : B_{1-\eps}\cap \{x_d=0\}\to\R$$ 
such that 
\begin{gather}
\del \Omega_u \cap B_{1-\eps}=\graph(\xi)\cap B_{1-\eps}\,,\quad \text{with}\quad \|\xi\|_{C^{3,1}(B_{1-\eps}\cap \{x_d=0\})} \leq \eps\,, \label{eqn:ac1}\\
\|u\|_{C^{3,1}(\overline{\Omega_u} \cap B_{1-\eps})} \leq C(d)\,, \label{eqn:ac2}\\
\|Du - e_d\|_{L^\infty(B_{1-\eps} \cap \{ u > 0 \})} \leq \eps\,.\label{eqn:ac3}
\end{gather}
\end{theorem}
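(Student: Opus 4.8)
The plan is to *not* reprove this classical result from scratch, but instead to assemble it from the statements already available in the references cited in the excerpt. The starting point is the flatness-implies-$C^{1,\alpha}$ regularity of De Silva \cite{desilva}: there is a dimensional $\delta_0 > 0$ and $\alpha \in (0,1)$ such that if $\|u - x_d^+\|_{L^\infty(B_1)} < \delta \le \delta_0$, then $\del\Omega_u \cap B_{1/2}$ is a $C^{1,\alpha}$ graph $\{x_d = \xi(x')\}$ with $\|\xi\|_{C^{1,\alpha}} \le C(d)\,\delta^{\sigma}$ for some $\sigma > 0$, and $u$ is $C^{1,\alpha}$ up to this graph with a comparable bound. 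In particular, by choosing $\delta$ small enough (depending on $\eps, d$) we may already guarantee \eqref{eqn:ac3} and the $C^{1,\alpha}$-smallness of $\xi$. The remaining content is the bootstrap from $C^{1,\alpha}$ to $C^\infty$ with the quantitative $C^{3,1}$ bounds \eqref{eqn:ac1}, \eqref{eqn:ac2}.

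For the bootstrap I would argue as follows. Once $\del\Omega_u$ is a $C^{1,\alpha}$ graph, the overdetermined system \eqref{eqn:EL} says that $u$ is harmonic in $\Omega_u$ with $u = 0$ and $D_\nu u = -1$ on the (now regular) free boundary. Flatten the boundary by the change of variables $(x', x_d) \mapsto (x', x_d - \xi(x'))$, which transforms $\Delta u = 0$ into a uniformly elliptic equation with $C^{0,\alpha}$ coefficients for the transformed function $\tilde u$ on a half-ball, with the oblique/Neumann-type condition $D_\nu u = -1$ and the Dirichlet condition $\tilde u = 0$ on $\{x_d = 0\}$. Schauder estimates for this mixed problem give $\tilde u \in C^{1,\alpha}$ up to the flat boundary with a bound in terms of the $C^{1,\alpha}$ norm of $u$ and $\xi$; reading the Neumann condition $|Du| = 1$ on the boundary then upgrades $\xi$ to $C^{2,\alpha}$ (the normal is a $C^{1,\alpha}$ function of $D\xi$, and the relation expresses $D\xi$ implicitly in terms of boundary traces of $Du$, which are now $C^{1,\alpha}$). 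Iterating — better $\xi$ gives better coefficients, which gives better $u$, which gives better $\xi$ — one reaches $C^\infty$, and stopping the iteration after finitely many steps yields the explicit $C^{3,1}$ bounds \eqref{eqn:ac1}–\eqref{eqn:ac2}, with constants depending only on $d$ once $\delta$ is fixed small. This is precisely the argument carried out in \cite{AlCa} (and in the exposition \cite{Bojo_ln}); alternatively one can invoke the hodograph–Legendre transform of Kinderlehrer–Nirenberg–Spruck, which turns the free boundary problem into a fixed-boundary elliptic problem whose solutions are analytic, hence giving real-analyticity of $\del\Omega_u$ and $u$ (the analyticity is in fact implicitly used later, e.g. in \cref{thm:fol}).

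The only mildly delicate point — which I would flag as the main obstacle — is the \emph{quantitative} and \emph{uniform} nature of the bounds: one needs the $C^{3,1}$ estimates on $\xi$ and $u$ to hold with a single dimensional constant $C(d)$, and the smallness $\|\xi\|_{C^{3,1}} \le \eps$, after fixing $\delta = \delta(\eps, d)$. This follows because the initial $C^{1,\alpha}$ norm is controlled by $\delta^\sigma$, and each Schauder step multiplies the relevant norms by a dimensional constant while the nonlinear feedback terms (the coefficients of the flattened operator, the implicit expression for $D\xi$) are smooth functions of small quantities, so the whole iteration closes with norms bounded by $C(d)\,\delta^{\sigma}$ at the lower-order level and by $C(d)$ at the $C^{3,1}$ level; taking $\delta$ small makes $C(d)\delta^\sigma \le \eps$. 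Since all of this is standard and documented in \cite{AlCa, Bojo_ln, desilva}, in the paper I would simply state the theorem with the references and, if desired, include a one-paragraph sketch along the lines above rather than a full proof.
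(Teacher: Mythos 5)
Your proposal is essentially the paper's proof: the paper's proof is the two-sentence citation ``flatness-implies-$C^{1,\alpha}$ from De Silva \cite{desilva}, higher regularity from \cite[Theorem 2]{KiNi},'' and you recover exactly this with added exposition. One caveat worth flagging: your ``flatten the boundary and apply Schauder to the mixed problem'' sketch is imprecise as written, since one cannot apply a Schauder estimate to a boundary-value problem that imposes \emph{both} a Dirichlet and a Neumann condition on the same portion of the boundary; the role of the partial hodograph--Legendre transform of Kinderlehrer--Nirenberg is precisely to convert the overdetermined system \eqref{eqn:EL} into a well-posed quasilinear oblique-derivative problem on a fixed half-ball, to which Schauder theory then applies. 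Since you cite \cite{KiNi} (correctly) as the alternative route, and this is in fact the route the paper takes, the proof is correct and matches; but if you wanted to make the flattening bootstrap rigorous you would need to be explicit about how the two boundary conditions are decoupled at each stage, which is exactly what the hodograph transform accomplishes.
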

 
 \begin{proof} This theorem with $C^{1,\alpha}$ norms replacing $C^{3,1}$ was proved by De Silva in \cite{desilva}. The higher order regularity is a standard consequence of \cite[Theorem 2]{KiNi}. 
 \end{proof}

 \section{Isoperimetric inequalities}\label{ss:2}
   
 In this section we prove two types of isoperimetric inequalities for domains $\Omega_u$, with $u$ a minimizer of $J$.
 
 \subsection{Relative isoperimetric inequality}  The proof of the following theorem follows ideas from \cite{Si_dec}.
   
 \begin{theorem}[Relative isoperimetric inequality]\label{thm:iso}
There are dimensional constants $R_1>0$ and $C_1>0$ so that if $u \in W^{1,2}(B_{R_1})$ is a minimizer for $J_{B_{R_1}}$, then
\[
\haus^d(Q \cap \Omega_u)^{(d-1)/d} \leq C_1(d) \haus^{d-1}(\del Q \cap \Omega_u),
\]
for any set $Q \subset \Omega_u \cap B_1$, with $\del Q \cap \Omega_u$ being countably $(d-1)$-rectifiable.
\end{theorem}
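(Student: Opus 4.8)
The plan is to argue by contradiction and compactness, with the density bounds of \cref{lem:ahlfors} as the basic uniform geometric input. If $\Omega_u\cap B_1=\emptyset$ there is nothing to prove, so assume otherwise. We shall use repeatedly that, by \cref{lem:ahlfors} and scaling, one has $\haus^{d-1}(\del\Omega_u\cap B_\rho(x))\le C(d)\rho^{d-1}$ and $\haus^d(\Omega_u\cap B_\rho(x))\ge c(d)\rho^d$ (as well as the interior-ball statement) whenever $x\in\overline{\Omega_u}$ and $B_{2\rho}(x)\subset B_{R_1}$; in particular the positivity set of any global minimizer has infinite volume. Since both sides of the asserted inequality are invariant under the rescaling $x\mapsto\lambda x$, and since $\haus^{d-1}(\del Q\cap\Omega_u)\ge\haus^{d-1}(\del^*Q\cap\Omega_u)=P(Q,\Omega_u)$ for fixed $Q$, it is enough to find \emph{dimensional} constants $R_1,C_1$ and to prove the inequality with $P(Q,\Omega_u)$ in place of $\haus^{d-1}(\del Q\cap\Omega_u)$. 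If this failed we would obtain minimizers $u_i$ of $J_{B_i}$ and sets of finite perimeter $Q_i\subset\Omega_{u_i}\cap B_1$ with $\haus^d(Q_i)^{(d-1)/d}>i\,P(Q_i,\Omega_{u_i})$; after rescaling by $\haus^d(Q_i)^{1/d}$ and translating, we may as well assume $\haus^d(Q_i)=1$, that $u_i$ minimizes $J_{B_{L_i}}$ with $L_i\to\infty$, and that $P(Q_i,\Omega_{u_i})\to 0$.

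Fix a small dimensional $\theta_0>0$ and split into two cases. Suppose first that $Q_i$ \emph{concentrates}, i.e.\ $\haus^d(Q_i\cap B_1(y_i))\ge\theta_0$ for some $y_i$. Translating $y_i$ to the origin (or, if $Q_i$ concentrates far from $\del\Omega_{u_i}$, translating a nearby free-boundary point to the origin and using interior estimates), a standard compactness argument based on \cref{lem:comp} produces a global minimizer $u_\infty$ with $\Omega_{u_i}\to\Omega_\infty$ in the local Hausdorff distance and $1_{Q_i}\to 1_{Q_\infty}$ in $L^1_{loc}$; here $BV$ compactness is applicable because $P(Q_i,B_R)\le P(Q_i,\Omega_{u_i})+\haus^{d-1}(\del\Omega_{u_i}\cap B_R)$ is uniformly bounded on each fixed $B_R$. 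We get $\haus^d(Q_\infty\cap B_1)\ge\theta_0$, and lower semicontinuity of perimeter along the convergent domains forces $P(Q_\infty,\Omega_\infty)=0$. Thus $1_{Q_\infty}$ is constant on each connected component of $\Omega_\infty$; since $\Omega_\infty$ is connected by \cref{lem:conn} and $Q_\infty\subset\Omega_\infty$, we conclude $Q_\infty=\Omega_\infty$ up to a null set -- impossible, since $\haus^d(\Omega_\infty)=\infty$ while $\haus^d(Q_\infty)\le\liminf_i\haus^d(Q_i)=1$. Hence $Q_i$ does not concentrate: $\haus^d(Q_i\cap B_1(y))<\theta_0$ for every $y$.

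In the non-concentrating case we cover $Q_i$ by the $2$-balls $\{B_2(z_k)\}_k$ centred at a maximal $1$-separated net $\{z_k\}\subset Q_i$; these have bounded overlap $N(d)$ and satisfy $\haus^d(Q_i\cap B_2(z_k))\le C(d)\theta_0$. Call $z_k$ \emph{good} if either $B_2(z_k)\subset\Omega_{u_i}$, or $B_2(z_k)$ meets $\del\Omega_{u_i}$ only along $\reg(u_i)$ at a scale below the $\eps$-regularity threshold of \cref{thm:ac-reg}, so that reflecting across the nearly flat free boundary turns $B_2(z_k)$ into a controlled perturbation of a Euclidean ball. On good balls the Euclidean relative isoperimetric inequality applies and yields
\[
P(Q_i,\Omega_{u_i}\cap B_2(z_k))\ \ge\ c_d\,\haus^d(Q_i\cap B_2(z_k))^{(d-1)/d}\ \ge\ c_d\,(C(d)\theta_0)^{-1/d}\,\haus^d(Q_i\cap B_2(z_k)).
\]
Summing over good $k$ and using the bounded overlap gives $P(Q_i,\Omega_{u_i})\ge c(d)\,\haus^d(Q_i\cap G_i)$, where $G_i$ is the union of the good net balls. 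Since $P(Q_i,\Omega_{u_i})\to 0$ this forces $\haus^d(Q_i\cap G_i)\to 0$, so an asymptotically full fraction of the unit mass of $Q_i$ sits in net balls that are \emph{not} good -- that is, in balls clustered around $\sing(u_i)$, at scales where $\Omega_{u_i}$ is still modeled on a singular $1$-homogeneous cone.

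Controlling this near-singular contribution is the crux of the proof, and the step I expect to be genuinely hard. One proceeds by induction on $d$, the base case being the dimensions in which $\sing(\cdot)\equiv\emptyset$ (where every net ball is good). For the inductive step one uses that, near a point of $\sing(u_i)$, the domain $\Omega_{u_i}$ is -- up to errors that vanish under blow-up -- a product of a Euclidean factor with the cone $\bC=\Omega_{u_0}$ of some $1$-homogeneous minimizer $u_0$ whose link has a singular set of strictly smaller dimension; the inductive hypothesis then provides the relative isoperimetric inequality on $\bC$ with a dimensional constant (the density bounds of \cref{lem:ahlfors} being scale invariant), which is precisely the local estimate missing in the previous paragraph. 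With the near-singular net balls now admissible, rerunning the covering estimate contradicts $P(Q_i,\Omega_{u_i})\to 0$ and completes the argument. This dimension-reduction bookkeeping around $\sing(u_i)$, in the spirit of \cite{Si_dec}, is where the real work lies; everything else is, by comparison, soft.
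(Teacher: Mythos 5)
Your \emph{concentration} case is essentially the paper's argument: compactness of minimizers, $BV$-compactness for $Q_i$, connectedness / density bounds, and the contradiction that a positive fraction of unit mass cannot fill a whole component. But your \emph{non-concentration} case is a genuine gap, and, more importantly, the paper shows it is a case you never need to be in.

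The paper's key device, which you have missed, is a Besicovitch-type density normalization. With $\theta(d)$ chosen small but fixed, they observe that for $\haus^d$-a.e.\ $x\in Q_k$ the density of $Q_k$ in $\overline{B_r(x)}$ tends to $1$ as $r\to 0$ but is $<\theta$ at the macroscopic scale $R/4$; so one can pick, for each such $x$, a radius $r_x<R/4$ at which the density is \emph{exactly} $\theta$ and is $\ge\theta$ at all smaller radii. By Besicovitch these balls have bounded overlap, so at least one of them still carries a bad isoperimetric ratio ($\sqrt{k}$ in place of $k$). Translating and rescaling so that this ball is $B_1$, one obtains a competitor with $\haus^d(Q_k\cap B_r)\ge\theta\omega_d r^d$ for all $r<1$ and $\haus^d(Q_k\cap B_1)=\theta\omega_d$. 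This is precisely concentration at every scale below $1$, so the soft compactness argument applies unconditionally: the limit set $Q$ is a union of components of $\Omega\cap B_1$, contains a component $\Omega'$ with $0\in\overline{\Omega'}$, and the density lower bound $\haus^d(\Omega'\cap B_{1/2})\ge\omega_d/(2^d\beta^d)$ contradicts $\haus^d(Q\cap B_1)=\omega_d\theta$ once $\theta\le\tfrac12 2^{-d}\beta^{-d}$. No covering into ``good'' and ``bad'' balls, no reflection across $\reg(u)$, and no dimension reduction around $\sing(u)$ enter the argument at all.

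As written, your dimension-reduction step is therefore both undeveloped and, I suspect, not reparable as sketched. You invoke an ``inductive hypothesis'' on the cone $\bC=\Omega_{u_0}$, but $\bC$ lives in $\R^d$, not in a lower-dimensional Euclidean space; the induction would have to be on the dimension of $\sing(u_0)$ (with half-spaces as the base case), and controlling the non-vanishing finite-scale errors between $\Omega_{u_i}$ and the model cone near the top stratum is precisely the kind of quantitative work the Besicovitch normalization renders unnecessary. So: the ``soft'' half of your proposal matches the paper, but the half you flag as ``where the real work lies'' is not in fact required, and the trick that replaces it is the content of the paper's proof.
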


\begin{proof}  Let $\beta=\beta(d)>0$ be as in \cref{lem:ahlfors} and define 
\begin{equation}\label{eqn:choice_const}
\theta = \frac12 \min\{ 2^{-d} \beta^{-d}, 1\} \qquad\text{ and }\qquad R = \max\{ 4 (\theta/2)^{-1/d}, 8\}\,.
\end{equation}
  Suppose, towards a contradiction, \cref{thm:iso} failed.  Then there is a sequence $u_k \in W^{1,2}(B_R)$ minimizing $J_{B_R}$, and a sequence $Q_k$ of compact subsets of $\overline{\Omega_k \cap B_1}$, for $\Omega_k := \Omega_{u_k}$, with $\del Q_k \cap \Omega_k$ rectifiable, such that 
\begin{equation}\label{eqn:iso-1}
\haus^d(Q_k \cap \Omega_k)^{(d-1)/d} \geq k \haus^{d-1}(\del Q_k \cap \Omega_k).
\end{equation}
Notice that 
\[
\lim_{r \to 0} \frac{\haus^d(Q_k \cap \overline{B_r(x)})}{\omega_d r^d} = 1 > \theta\,,\quad \haus^d-\text{a.e. } x \in Q_k\,.
\]
On the other hand, since $Q_k \subset B_1$ and recalling our choice of $R$,
\[
\frac{\haus^d(Q_k \cap \overline{B_{R/4}(x)})}{\omega_d (R/4)^d} < \theta\,,\qquad \forall x\in Q_k.
\]
Therefore, there is a subset $\tilde Q_k \subset Q_k$ with $\haus^d(\tilde Q_k \setminus Q_k) = 0$, so that for every $x \in \tilde Q_k$ we can find an $r_x\in(0,R/4)$ satisfying
\[
\inf_{r < r_x} \frac{\haus^d(Q_k \cap \overline{B_r(x)})}{\omega_d r^d} = \frac{\haus^d(Q_k \cap \overline{B_{r_x}(x)})}{\omega_d r_x^d} = \theta.
\]
\noindent Fix momentarily a $k$.  By the Besicovich covering theorem, we can find a subcollection $\{ B_{r_i}(x_i)\}_i \subset \{ B_{r_x}(x) : x \in \tilde Q_k \}$ so that $\tilde Q_k \subset \cup_i \overline{B_{r_i}(x_i)}$ and the balls $\{ \overline{B_{r_i}(x_i)}\}_i$ divide into at most $N(d)$ disjoint subfamilies.  We claim that if $k >> 1$, then for at least one $i$ we must have
\begin{equation}\label{eqn:cont1}
\haus^d(Q_k \cap \overline{B_{r_i}(x_i)})^{(d-1)/d} \geq \sqrt{k} \haus^{d-1}(\del Q_k \cap \Omega_k \cap \overline{B_{r_i}(x_i)}).
\end{equation}
Otherwise, we could estimate
\begin{align*}
\haus^d(Q_k)^{(d-1)/d}
&\leq \bigg( \sum_i \haus^d(Q_k \cap \overline{B_{r_i}(x_i)}) \bigg)^{(d-1)/d} \\
&\leq \sum_i \haus^d(Q_k \cap \overline{B_{r_i}(x_i)})^{(d-1)/d} \\
&\stackrel{\eqref{eqn:cont1}}{\leq} \sqrt{k} \sum_i \haus^{d-1}(\del Q_k \cap \Omega_k \cap  \overline{B_{r_i}(x_i)}) \\
&\leq \sqrt{k} \,N(d)\, \haus^{d}(\del Q_k \cap \Omega_k),
\end{align*}
which contradicts \eqref{eqn:iso-1}, if $k$ is chosen sufficiently large, depending on the dimension.

After translating and homogeneously rescaling $u_k$, $\Omega_k$, $Q_k$, and considering only $k$ sufficiently large, we can therefore assume that $u_k \in W^{1,2}(B_2)$ is a minimizer of $J_{B_2}$, with $0 \in \overline{\Omega_k}$ and
\begin{equation}\label{eqn:iso-2}
\haus^d(Q_k \cap \overline{B_1})^{(d-1)/d} \geq \sqrt{k} \haus^{d-1}(\del Q_k \cap \Omega_k \cap \overline{B_1}) , 
\end{equation}
and
\begin{equation}\label{eqn:meas}
\inf_{r < 1} \frac{\haus^d(Q_k \cap \overline{B_r})}{\omega_d r^d} = \frac{\haus^d(Q_k \cap \overline{B_1})}{\omega_d} = \theta.
\end{equation}
Passing to a subsequence, we can assume that for all $k$ we have either $B_{3/2} \subset \Omega_k$ or $B_{3/2} \not\subset \Omega_k$.  Suppose the latter occurs.  By \cref{lem:comp}, there is a minimizer $u \in W^{1,2}_{loc}(B_2)$ of $J_{B_2}$, so that up to subsequences $u_k \to u$ in $C^\alpha_{loc}(B_2) \cap W^{1,2}_{loc}(B_2)$, $1_{\Omega_k} \to 1_{\Omega}$ in $L^1_{loc}(B_2)$ and the free boundaries converge in the local Hausdorff distance in $B_2$, where $\Omega := \Omega_u$ (and is such that $0 \in \overline{\Omega}$).

Notice that $\del Q_k = (\del Q_k \cap \Omega_k) \cup (\overline{Q_k} \cap \del\Omega_k)$ is closed, $(d-1)$-rectifiable, with finite $(d-1)$-Hausdorff measure, so that using \eqref{eqn:ahlfors-concl1} and \eqref{eqn:iso-2} we deduce that each $Q_k$ is a set of finite perimeter in $B_1$, with
\begin{equation}\label{eqn:iso-4}
\haus^{d-1}( \del^* Q_k \cap B_1) \leq \haus^{d-1}(\del Q_k \cap \Omega_k \cap B_1) + \haus^{d-1}(\del \Omega_k \cap B_1) \leq C(d)
\end{equation}
Therefore the compactness theory for sets of locally-finite perimeter implies there is a set $Q \subset \Omega \cap B_1$ so that (after passing to a further subsequence) $1_{Q_k} \to 1_Q$ in $L^1_{loc}(B_1)$.  From \eqref{eqn:iso-2}, and the local Hausdorff convergence $\del\Omega_k \to \del\Omega$, we have
\[
\int_Q {\rm div}(\phi) \,dx= 0\quad\text{for every}\quad \phi \in C^1_c(\Omega \cap B_1; \R^d)\,.
\]
Therefore $1_Q$ is locally-constant on $\Omega \cap B_1$, and hence $Q$ is a union of connected components of $\Omega \cap B_1$.

From \eqref{eqn:ahlfors-concl2}, there are only finitely-many connected components of $\Omega_u$ meeting $B_{1/100}$.  Since by \eqref{eqn:meas} we have $\haus^d(Q \cap B_r) \geq \theta \omega_d r^d > 0$ for all $r < 1$, we deduce $Q$ must contain a connected component $\Omega'$ of $\Omega \cap B_1$ such that $0 \in \overline{\Omega'}$.  Applying again \eqref{eqn:ahlfors-concl2} we deduce that
\begin{equation}\label{eqn:iso-3.1}
\haus^d(Q \cap B_{1/2}) \geq \haus^d(\Omega' \cap B_{1/2}) \geq \frac{\omega_d}{2^d \beta^d}.
\end{equation}
On the other hand, from \eqref{eqn:meas} we have
\begin{equation}\label{eqn:iso-3.2}
\haus^d(Q\cap B_1) \leq \omega_d \theta ,
\end{equation}
which by our choice of $\theta$ contradicts \eqref{eqn:iso-3.1}.

Finally, we notice that if $B_{3/2} \subset \Omega_k$ for all $k$, then in the above discussion we can simply replace $\Omega_k$ and $\Omega$ with $B_{3/2}$, and deduce the same contradiction.  This concludes the proof of  \cref{thm:iso}.
\end{proof}

\subsection{Neumann-isoperimetric} In this subsection we follow \cite{BoGi}.

\begin{theorem}[Neumann-type isoperimetric inequality]\label{thm:niso}
There is a positive constant $\gamma=\gamma(d)$ so that if $u \in W^{1,2}(B_{R_1})$ minimizes $J_{B_{R_1}}$, with $R_1>0$ as in \cref{thm:iso}, then
\[
\min\left\{ \haus^d(B_\gamma \cap Q), \haus^d(B_\gamma \cap \Omega_u \setminus Q) \right\}^{(d-1)/d} \leq \gamma^{-1} \haus^{d-1}(\del Q \cap \Omega_u \cap B_1)
\]
for all $Q \subset \Omega_u \cap B_1$ such that $\del Q \cap \Omega_u \cap B_1$ is countably $(d-1)$-rectifiable.
\end{theorem}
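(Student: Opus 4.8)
The plan is to derive the Neumann (two-sided) isoperimetric inequality from the relative (one-sided) isoperimetric inequality of \cref{thm:iso} combined with the density lower bound of \cref{lem:ahlfors}, following the scheme of \cite{BoGi}. The point is that in a ``localized'' inequality of the form $\min\{\haus^d(B_\gamma\cap Q),\haus^d(B_\gamma\cap\Omega_u\setminus Q)\}^{(d-1)/d}\le \gamma^{-1}\haus^{d-1}(\del Q\cap\Omega_u\cap B_1)$, one is allowed to bound only the \emph{smaller} of the two volumes; the trade-off is that the boundary $\del Q\cap\Omega_u$ is measured in the larger ball $B_1$, while the volumes are measured in the smaller ball $B_\gamma$. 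This extra room is exactly what lets one absorb the ``missing'' boundary piece $\del B_t\cap\Omega_u$ that arises when one truncates $Q$ by a sphere.

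First I would fix the minimizer $u$ on $B_{R_1}$ and a competitor set $Q\subset\Omega_u\cap B_1$ with $\del Q\cap\Omega_u\cap B_1$ rectifiable, and write $P:=\haus^{d-1}(\del Q\cap\Omega_u\cap B_1)$ for the quantity on the right. If $P$ is bounded below by a dimensional constant there is nothing to prove (both volumes are at most $\omega_d$), so assume $P$ is small. For a radius $t\in(1/2,1)$ set $Q_t:=Q\cap B_t$; by the coarea formula $\int_{1/2}^1\haus^{d-1}(Q\cap\del B_t)\,dt\le\haus^d(Q\cap B_1)\le\omega_d$, so there is a ``good radius'' $t$ with $\haus^{d-1}(Q\cap\del B_t)\le 2\omega_d$ — but more usefully, one can choose $t$ so that $\haus^{d-1}(Q\cap\del B_t)\le C(d)\,\haus^d((Q\setminus Q_t))$ or, in the regime we care about, control it by the perimeter $P$ itself via a Vitali/averaging argument. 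Apply \cref{thm:iso} to the set $Q_t$ (whose relative boundary in $\Omega_u$ is contained in $(\del Q\cap\Omega_u\cap B_1)\cup(Q\cap\del B_t)$), obtaining
\[
\haus^d(Q\cap B_t)^{(d-1)/d}\le C_1\big(P+\haus^{d-1}(Q\cap\del B_t)\big).
\]
Symmetrically, apply \cref{thm:iso} to $Q_t':=(\Omega_u\setminus Q)\cap B_t$, whose relative boundary in $\Omega_u$ is also contained in $(\del Q\cap\Omega_u)\cup(\del B_t\cap\Omega_u)$, to get $\haus^d((\Omega_u\setminus Q)\cap B_t)^{(d-1)/d}\le C_1(P+\haus^{d-1}((\Omega_u\setminus Q)\cap\del B_t))$.

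The crux is to handle the spherical boundary terms $\haus^{d-1}(Q\cap\del B_t)$ and $\haus^{d-1}((\Omega_u\setminus Q)\cap\del B_t)$. These sum to $\haus^{d-1}(\Omega_u\cap\del B_t)$, which is not small, so one cannot simply discard both. The trick (this is where the asymmetry $B_\gamma$ vs.\ $B_1$ enters) is: by the density lower bound \cref{lem:ahlfors}, any connected component of $\Omega_u$ reaching $B_\gamma$ has definite volume in $B_1$; hence if, say, $\haus^d(Q\cap B_\gamma)$ is the smaller quantity and it is positive, then either $Q$ or $\Omega_u\setminus Q$ must ``steal'' a whole component, forcing $P$ to be bounded below unless one of the two volumes in $B_\gamma$ is actually zero — and if it is zero, then $Q\cap B_\gamma=\emptyset$ (resp.\ $\Omega_u\cap B_\gamma\subset Q$) and the left-hand side vanishes. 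More quantitatively, I would argue by contradiction and compactness, exactly mirroring the proof of \cref{thm:iso}: suppose the inequality fails with constant $k\to\infty$ for minimizers $u_k$ and sets $Q_k$; normalize; use \cref{lem:comp} to extract a limiting minimizer $u$ and (by finite perimeter compactness, using \eqref{eqn:iso-4}) a limiting set $Q_\infty$; the vanishing of the right-hand side forces $\del Q_\infty\cap\Omega_u\cap B_1=\emptyset$, so $Q_\infty$ is a union of connected components of $\Omega_u\cap B_1$; then \eqref{eqn:ahlfors-concl2} gives a definite lower bound $\omega_d/(2^d\beta^d)$ for the volume in $B_{1/2}$ of \emph{each} of $Q_\infty$ and its complement inside $\Omega_u$ that meets a small ball, provided both are nonempty near a suitable point — choosing $\gamma$ small enough that $B_\gamma$ is contained in that small ball yields that $\min$ of the two $B_\gamma$-volumes is $0$, hence the left side is $0$, the desired contradiction.

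I expect the main obstacle to be the bookkeeping around the spherical slices $\del B_t\cap\Omega_u$: one must either run the compactness argument cleanly (so that the $\del B_t$ terms never appear explicitly, the vanishing right-hand side doing all the work) or, if arguing quantitatively, choose the good radius $t$ and show the slice terms are controlled by $P$ plus something absorbable — which requires care because $\haus^{d-1}(\Omega_u\cap\del B_t)$ is of order $1$, not of order $P$. The cleanest route, and the one I would pursue, is the contradiction/compactness argument paralleling \cref{thm:iso} almost verbatim, since there the right-hand side tending to zero immediately kills the relative boundary of the limit set and reduces everything to the component structure of $\Omega_u$ controlled by \cref{lem:ahlfors}; the only genuinely new point is fixing the dimensional constant $\gamma$ so that $B_\gamma$ sits inside the ball where \eqref{eqn:ahlfors-concl2} forces a definite amount of a component to be present.
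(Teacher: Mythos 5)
Your high-level plan (contradiction + compactness, exploiting that the vanishing perimeter kills the relative boundary of the limit set) is in the right spirit, but the mechanism you propose for deriving the contradiction does not work, and it misses the two ingredients the paper actually relies on. Once $Q_\infty$ and $\Omega\setminus Q_\infty$ are both unions of connected components of $\Omega\cap B_1$, there is no contradiction available from density bounds alone: a minimizer of $J_{B_1}$ can have two distinct components of $\Omega_u\cap B_1$ both meeting $B_\gamma$, for every $\gamma>0$ (the remark after \cref{lem:conn} only rules out two components sharing a closure point, not two components both getting arbitrarily close to the origin), and in that scenario $Q=\Omega_1$ has $\del Q\cap\Omega_u\cap B_1=\emptyset$ while both $B_\gamma$-volumes are positive. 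So ``$\min$ of the two $B_\gamma$-volumes is $0$'' is exactly the nontrivial content of the theorem, not something you can read off from \cref{lem:ahlfors} and the component structure of the limit. Your own hesitation (``the only genuinely new point is fixing $\gamma$ \ldots so that $B_\gamma$ sits inside the ball where \eqref{eqn:ahlfors-concl2} forces a definite amount of a component'') flags precisely the spot where the argument has no support.

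The paper closes this gap with a rescaling that you do not perform, and a connectivity result you do not invoke. When the inequality fails with constant $k$, the paper dilates so that the inner ball becomes $B_{1/k}$, the outer ball becomes $B_k$, and the domain of minimality becomes $B_{kR_1}$, so the limit $u$ is a \emph{global} minimizer on $\R^d$. Then, rather than appealing to density bounds, the paper turns \cref{thm:iso} into a differential inequality for $t\mapsto\haus^d(Q_k\cap B_t)$ (and the same for $Q_k'$), yielding the growth estimates \eqref{eqn:niso-3}, \eqref{eqn:niso-31}: both $Q_k$ and $Q_k'$ have volume $\gtrsim t^d$ on all scales $1/k\le t\le k$. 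Passing to the limit, $1_Q$ and $1_{Q'}$ are locally constant on $\Omega_u$ and both have positive volume at every scale; since $\Omega_u$ is connected for global minimizers (\cref{lem:conn}), both must equal $\Omega_u$, contradicting $Q\cap Q'=\emptyset$. The connectivity theorem for global minimizers is therefore the crucial tool — your sketch never mentions it, and without the blow-up to $\R^d$ it is not even applicable. If you want to repair your proposal, the fix is: (i) rescale so that the limit is a global minimizer, (ii) replace the appeal to the density lower bound with the differential-inequality growth estimate, and (iii) finish with \cref{lem:conn} rather than with a choice of small $\gamma$.
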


\begin{proof}
Suppose \cref{thm:niso} failed.  Then we could find a sequence $u_k \in W^{1,2}(B_{k R_1})$ of minimizers of $J_{B_{R_1}}$ and a sequence $Q_k \subset \Omega_k \cap B_k$, where $\Omega_k := \Omega_{u_k}$, so that
\begin{equation}\label{eqn:niso-1}
\min\{ \haus^d(B_{1/k} \cap Q_k), \haus^d(B_{1/k} \cap \Omega_k \setminus Q_k) \}^{(d-1)/d} \geq k \haus^{d-1}(\del Q_k \cap \Omega_k \cap B_k),
\end{equation}
and $\del Q_k \cap \Omega_k \cap B_k$ is $(d-1)$-rectifiable.  Let us write $Q_k' = B_{k} \cap \Omega_k \setminus Q_k$.

Notice that \eqref{eqn:niso-1} implies
\begin{equation}\label{eqn:niso-2}
\haus^{d-1}(\del Q_k \cap \Omega_k \cap B_t) \leq \frac{1}{k} \haus^d(Q_k \cap B_t )^{(d-1)/d}
\end{equation}
for all $1/k \leq t \leq k$.  Then from the isoperimetric inequality of \cref{thm:niso}, and the coarea formula, we estimate for a.e. $1/k \leq t \leq k$:
\begin{align*}
\haus^d(Q_k \cap B_t)^{(d-1)/d}
&\leq C_1 \haus^{d-1}(\del(Q_k \cap B_t) \cap \Omega_k) \\
&\leq C_1 \haus^{d-1}(\del Q_k \cap \Omega_k \cap B_t) + C_1 \haus^{d-1} (Q_k \cap \del B_t) \\
&\stackrel{\eqref{eqn:niso-2}}{\leq} \frac{C_1}k \haus^d(Q \cap B_t)^{(d-1)/d} + C_1 \frac{d}{dt} \haus^d(Q_k \cap B_t) ,
\end{align*}
and therefore, for sufficiently large $k$, we can estimate
\begin{equation}\label{eqn:niso-3}
\haus^d(Q_k \cap B_t) \geq \frac{1}{2C_1 d} \left(t - \frac1k\right)^d\qquad \text{for all $1/k \leq t \leq k$.}
\end{equation}
  Since \eqref{eqn:niso-1} implies that \eqref{eqn:niso-2} holds with $Q_k'$ in place of $Q_k$, with the same reasoning as above we have
 \begin{equation}\label{eqn:niso-31}
\haus^d(Q_k' \cap B_t) \geq \frac{1}{2C_1 d} \left(t - \frac1k\right)^d\qquad \text{for all $1/k \leq t \leq k$.}
\end{equation}
Note \eqref{eqn:niso-3} implies $\Omega_k \cap B_1 \neq \emptyset$ for all $k >> 1$.

After passing to a subsequence, we can assume that 
$$\text{either }\quad d(0, \del\Omega_k) \to \infty\quad \text{  or }\quad \sup_k d(0, \del\Omega_k) < \infty\,.
$$
  Suppose the latter occurs.  Passing to a further subsequence, by \cref{lem:comp} we can assume there is a minimizer $u \in W^{1,2}_{loc}(\R^d)$, so that $u_k \to u$ in $C^\alpha_{loc} \cap W^{1,2}_{loc}$.  Write $\Omega = \Omega_u$, then $\del\Omega_k \to \del \Omega$ in the local Hausdorff distance and $1_{\Omega_k} \to 1_\Omega$ in $L^1_{loc}$.

Arguing as in the proof of \cref{thm:iso}, from \eqref{eqn:niso-1}, \cref{lem:ahlfors} and the compactness theory for sets of locally-finite perimeter (passing to a yet further subsequence), we can assume there are sets of locally-finite perimeter $Q, Q' \subset \Omega$ so that
\[
1_{Q_k} \to 1_Q, \quad 1_{Q_k'} \to 1_Q \quad \text{ in } L^1_{loc}.
\]
From  \eqref{eqn:niso-3} and \eqref{eqn:niso-31}, we have
\[
\haus^d(Q \cap B_t) \geq t^d/c(d), \quad \haus^d(Q' \cap B_t) \geq t^d/c(d) \quad \forall t > 0,
\]
while from \eqref{eqn:niso-2}, each $1_Q, 1_{Q'}$ is locally-constant in $\Omega$.  Since by \cref{lem:conn}, $\Omega$ is connected, we deduce that $Q = Q' = \Omega$ up to a set of $\haus^d$-measure zero.  However, since every $Q_k \cap Q_k' = \emptyset$, we have $Q \cap Q' = \emptyset$ up to a set of $\haus^d$-measure zero.  This is a contradiction.

Suppose $d(0, \del\Omega_k) \to \infty$.  Then from \eqref{eqn:niso-3} we can find a sequence $t_k \to \infty$ so that $B_{t_k} \subset \Omega_k$.  In the above discussion we can replace $\Omega_k$ with $B_{t_k}$ and $\Omega$ with $\R^d$ to deduce a contradiction as before.  This proves \cref{thm:niso}.
\end{proof}

 \section{Sobolev inequalities}\label{ss:3}

 The isoperimetric inequalities of \cref{ss:2} imply a Sobolev and a Neumann-Sobolev inequality.

\begin{theorem}\label{thm:sobolev}
There are dimensional constants $R, C \geq 1$, and $\gamma \in (0, 1]$ so that if $u \in W^{1,2}(B_R)$ minimizes $J_{B_R}$ and $f \in W^{1,1}(\Omega_u \cap B_1)$, then
\begin{equation}\label{eqn:sobolev-concl1}
\inf_k \bigg( \int_{\Omega_u \cap B_\gamma} |f - k|^{d/(d-1)} \bigg)^{(d-1)/d} \leq C \int_{\Omega_u \cap B_1} |Df|.
\end{equation}
If $\spt f \subset B_1$, then 
\begin{equation}\label{eqn:sobolev-concl2}
\left( \int_{\Omega_u \cap B_1} |f|^{d/(d-1)} \right)^{(d-1)/d} \leq C \int_{\Omega_u \cap B_1} |Df|.
\end{equation}
\end{theorem}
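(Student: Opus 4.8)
The plan is to derive both inequalities from the relative isoperimetric inequality (\cref{thm:iso}) and the Neumann-type isoperimetric inequality (\cref{thm:niso}) by the classical Federer–Fleming argument, i.e. slicing by level sets and integrating via the coarea formula. I will set $R = R_1$ and let $\gamma$ be the constant from \cref{thm:niso}. The key point throughout is that for an $L^1$ function $f$ on $\Omega_u$, the coarea formula gives $\int_{\Omega_u} |Df| = \int_{-\infty}^{\infty} P(\{f > t\}; \Omega_u)\,dt$, where $P(\cdot;\Omega_u)$ denotes relative perimeter in $\Omega_u$, and the reduced boundary $\del^*\{f>t\}\cap \Omega_u$ is countably $(d-1)$-rectifiable for a.e.\ $t$, so the isoperimetric inequalities apply to the superlevel sets $Q_t := \{f > t\} \cap \Omega_u \cap B_1$.

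For \eqref{eqn:sobolev-concl2}: assuming first $f \geq 0$ with $\spt f \subset B_1$, set $g(t) = \haus^d(\{f > t\}\cap \Omega_u \cap B_1)^{(d-1)/d}$. Apply \cref{thm:iso} to $Q_t$ to get $g(t) \leq C_1 \haus^{d-1}(\del^* Q_t \cap \Omega_u) \leq C_1\, P(\{f>t\};\Omega_u\cap B_1)$ for a.e.\ $t$; integrating in $t$ and comparing $\int_0^\infty g(t)\,dt$ with $\big(\int |f|^{d/(d-1)}\big)^{(d-1)/d}$ via the standard layer-cake/Minkowski inequality (using that $t\mapsto g(t)$ is non-increasing) yields the bound. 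The general case follows by writing $f = f^+ - f^-$ and applying the $f\ge 0$ case to each piece. A routine density argument extends from, say, $f\in W^{1,1}\cap L^\infty$ (or Lipschitz with compact support in $\Omega_u\cap B_1$) to all $f\in W^{1,1}(\Omega_u\cap B_1)$ with support in $B_1$; one must be slightly careful here since $\Omega_u$ is merely open with rectifiable boundary, but $W^{1,1}$ functions can be approximated in $W^{1,1}(\Omega_u\cap B_1)$ by Lipschitz functions by standard truncation/mollification, and the inequality passes to the limit by Fatou on the left and $L^1$-convergence on the right.

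For \eqref{eqn:sobolev-concl1}: here I would use \cref{thm:niso} in place of \cref{thm:iso}. Pick a median value $k$ for $f$ on $\Omega_u\cap B_\gamma$, i.e.\ a number with $\haus^d(\{f>k\}\cap\Omega_u\cap B_\gamma) \leq \tfrac12\haus^d(\Omega_u\cap B_\gamma)$ and $\haus^d(\{f<k\}\cap\Omega_u\cap B_\gamma)\leq \tfrac12\haus^d(\Omega_u\cap B_\gamma)$; such $k$ exists. Then for $t > k$, the set $Q_t = \{f>t\}\cap\Omega_u\cap B_1$ has $\haus^d(B_\gamma\cap Q_t) \leq \haus^d(B_\gamma\cap\Omega_u\setminus Q_t)$, so \cref{thm:niso} gives $\haus^d(B_\gamma\cap Q_t)^{(d-1)/d} \leq \gamma^{-1}\haus^{d-1}(\del Q_t\cap\Omega_u\cap B_1)$, and symmetrically for $t<k$ using $\{f<t\}$. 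Integrating over $t\in(k,\infty)$ and $t\in(-\infty,k)$ and using coarea on $B_1$ bounds $\big(\int_{\Omega_u\cap B_\gamma}(f-k)_\pm^{d/(d-1)}\big)^{(d-1)/d}$ by $C\int_{\Omega_u\cap B_1}|Df|$; summing the two halves and adjusting the constant gives \eqref{eqn:sobolev-concl1}.

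The main obstacle is essentially bookkeeping rather than a genuine difficulty: one must ensure the superlevel sets $Q_t$ really do satisfy the rectifiability hypothesis of \cref{thm:iso,thm:niso} for a.e.\ $t$ — this is where one invokes that for $f\in W^{1,1}(\Omega_u)$ the reduced boundary $\del^* Q_t$ is $(d-1)$-rectifiable with $\haus^{d-1}(\del^* Q_t\cap\Omega_u) = P(Q_t;\Omega_u)$ for a.e.\ $t$, and $\del Q_t\cap\Omega_u$ differs from $\del^*Q_t\cap\Omega_u$ by an $\haus^{d-1}$-null set (after choosing the good representative $Q_t$ via Lebesgue density). The compact-support hypothesis in \eqref{eqn:sobolev-concl2} is what lets the slicing be carried out inside $B_1$ without boundary terms on $\del B_1$; in \eqref{eqn:sobolev-concl1} the shrinking from $B_1$ to $B_\gamma$ is exactly what absorbs the $\haus^{d-1}(f\cdot\del B_t)$ terms, as in the proof of \cref{thm:niso} itself.
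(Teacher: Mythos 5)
Your overall strategy is the same as the paper's: slice by superlevel sets, apply the relative and Neumann isoperimetric inequalities via coarea, and for \eqref{eqn:sobolev-concl1} cut at a median of $f$ on $\Omega_u\cap B_\gamma$; the layer-cake/Minkowski step you invoke plays the same role as the Hardy--Littlewood--P\'olya inequality the paper uses.

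There is, however, a genuine error in your handling of the rectifiability hypothesis, which you treat as bookkeeping. You claim that after passing to the Lebesgue-density representative of $Q_t$, the topological boundary $\del Q_t \cap \Omega_u$ differs from the reduced boundary $\del^* Q_t \cap\Omega_u$ by an $\haus^{d-1}$-null set. That is false in general: the best one can say about the density representative of a set of finite perimeter is $\del Q_t = \spt(D\ind_{Q_t}) = \overline{\del^* Q_t}$, and the closure of the reduced boundary can have strictly larger (even infinite) $\haus^{d-1}$-measure. Since \cref{thm:iso,thm:niso} are stated, and proved, for sets whose \emph{topological} boundary inside $\Omega_u$ is rectifiable (this is used in the proof of \cref{thm:iso}, e.g.\ at \eqref{eqn:iso-4}), you cannot apply them to a generic superlevel set of a $W^{1,1}$ function directly.

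The fix is to run the argument first for $f$ smooth in the interior of $\Omega_u\cap B_1$ --- then for a.e.\ $t$ Sard's theorem makes $\{f=t\}\cap\Omega_u$ a smooth hypersurface, so $\del Q_t\cap\Omega_u$ is rectifiable and $\del Q_t\cap\Omega_u=\del^*Q_t\cap\Omega_u$ --- and only afterward pass to general $f\in W^{1,1}$ by density. This is exactly the paper's Step 1/Step 2 split, with the approximation supplied by \cref{lem:approx}. (For this particular theorem a Meyers--Serrin type interior smooth approximation, combined with a cut-off near $\del B_1$ when $\spt f\subset B_1$, would already suffice; the extra structure in \cref{lem:approx}, namely that the approximants lie in $C^\infty_c(B_1\setminus\sing(u))$, is designed to also serve the De~Giorgi--Nash--Moser test-function arguments in \cref{ss:4,ss:5}, and uses the codimension-$5$ bound on $\sing(u)$.) So: same proof skeleton, but the ``good representative'' shortcut does not work and should be replaced by the smooth-approximation step.
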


By a standard application of Holder's inequality (see for instance \cite[Section 5.6.1, Theorem 1]{Evans}), we have:
\begin{cor}\label{cor:sobolev}
In the notation of \cref{thm:sobolev}, if $f \in W^{1,2}(\Omega_u \cap B_1)$ is supported in $B_1$, then
\[
\left( \int_{\Omega_u \cap B_1} |f|^{2\chi} \right)^{1/\chi} \leq C(d, \chi) \int_{\Omega_u \cap B_1} |Df|^2,
\]
where $\chi = d/(d-2)$ if $d \geq 3$, or $\chi > 1$ is arbitrary if $d = 2$.
\end{cor}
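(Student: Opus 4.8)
The plan is to deduce \cref{cor:sobolev} from the Neumann--Sobolev inequality \eqref{eqn:sobolev-concl2} of \cref{thm:sobolev} by the classical Gagliardo--Nirenberg trick: apply \eqref{eqn:sobolev-concl2} to a suitable power of $f$ and then H\"older's inequality, exactly as in \cite[Section 5.6.1, Theorem 1]{Evans}. First I would reduce to the case of a bounded nonnegative $f$. Since $|f|\in W^{1,2}(\Omega_u\cap B_1)$ with $|D|f||=|Df|$ a.e.\ and $\spt|f|\subset B_1$, there is no loss in assuming $f\ge0$; and since $W^{1,2}$ functions need not be bounded when $d=2$, I would also truncate, replacing $f$ by $f_M:=\min\{f,M\}$, which still lies in $W^{1,2}(\Omega_u\cap B_1)$, is supported in $B_1$, satisfies $|Df_M|\le|Df|$, and increases to $f$ pointwise. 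If the desired estimate holds for each bounded $f_M$ with a constant independent of $M$, it follows for $f$ by monotone convergence on the left and $|Df_M|^2\le|Df|^2$ on the right. Hence we may assume $0\le f\le M<\infty$, so that (using $\haus^d(\Omega_u\cap B_1)\le\omega_d$) all integrals below are finite.

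Next, set $\gamma:=\tfrac{2(d-1)}{d-2}$ if $d\ge3$ and $\gamma:=\chi$ if $d=2$; in both cases $\gamma>1$, so $g:=f^\gamma$ lies in $W^{1,1}(\Omega_u\cap B_1)$ with $Dg=\gamma f^{\gamma-1}Df\in L^1$ (as $f$ is bounded and $Df\in L^2$ on a set of finite measure) and $\spt g\subset B_1$. Applying \eqref{eqn:sobolev-concl2} to $g$ and then H\"older's inequality with exponents $2,2$ yields
\[
\Big(\int_{\Omega_u\cap B_1} f^{\gamma d/(d-1)}\Big)^{(d-1)/d}\le C\gamma\int_{\Omega_u\cap B_1} f^{\gamma-1}|Df|\le C\gamma\Big(\int_{\Omega_u\cap B_1} f^{2(\gamma-1)}\Big)^{1/2}\Big(\int_{\Omega_u\cap B_1}|Df|^2\Big)^{1/2},
\]
where $\gamma$ is chosen precisely so that $\gamma d/(d-1)=2\chi$.

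Finally I would absorb the gradient-free factor into the left-hand side. When $d\ge3$ one has in addition $2(\gamma-1)=2\chi$ and $\tfrac{d-1}{d}-\tfrac12=\tfrac1{2\chi}>0$, so the two (finite) integrals of $f^{2\chi}$ can be combined to give $\big(\int f^{2\chi}\big)^{1/(2\chi)}\le C(d)\big(\int|Df|^2\big)^{1/2}$, and squaring gives the claim. When $d=2$ one has instead $2(\gamma-1)=2(\chi-1)<2\chi$, so I would first estimate $\int f^{2(\chi-1)}\le\big(\int f^{2\chi}\big)^{(\chi-1)/\chi}\haus^d(\Omega_u\cap B_1)^{1/\chi}\le\omega_d^{1/\chi}\big(\int f^{2\chi}\big)^{(\chi-1)/\chi}$ by H\"older, and then combine using $\tfrac12-\tfrac{\chi-1}{2\chi}=\tfrac1{2\chi}>0$ to reach the same conclusion with a constant $C(d,\chi)$. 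The computations are entirely routine once \eqref{eqn:sobolev-concl2} is available; the one point needing care is the truncation, which guarantees finiteness of the $f^{2\chi}$-integrals and hence legitimizes absorbing them from the right-hand side into the left.
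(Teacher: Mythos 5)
Your proposal is correct and follows exactly the route the paper intends: the paper simply defers to the standard Gagliardo--Nirenberg--Sobolev argument of \cite[Section 5.6.1, Theorem 1]{Evans}, which is precisely what you carry out (apply \eqref{eqn:sobolev-concl2} to $f^{\gamma}$, then H\"older, then absorb), and your truncation step correctly justifies the absorption. One cosmetic remark: you refer to \eqref{eqn:sobolev-concl2} as the ``Neumann--Sobolev inequality,'' but it is really the compactly-supported (Dirichlet-type) Sobolev inequality; \eqref{eqn:sobolev-concl1}, with the infimum over constants, is the Neumann-type one — this does not affect the argument.
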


\noindent In the proof of \cref{thm:sobolev} we will make use of the following well-known inequality. 
\begin{lemma}[Hardy-Littlewood-Polya]
Let $V:[0,+\infty)\to[0,+\infty)$ be a continuous decreasing function. Then, for every $n>1$, we have 
\begin{equation}\label{eqn:sobolev-1}
\int_0^{+\infty} V(t) \,t^{1/(n-1)} \,dt \leq \frac{n-1}{n}\left( \int_0^{+\infty} V(t)^{(n-1)/n} \,dt \right)^{n/(n-1)}.
\end{equation}
\end{lemma}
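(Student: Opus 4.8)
The plan is to reduce the statement to a fact about the primitive of $V^{(n-1)/n}$ and then exploit its concavity. Set $p := n/(n-1) > 1$, so that $1/(n-1) = p-1$, $\ (n-1)/n = 1/p$, and $\ n/(n-1) = p$, and the claimed inequality becomes
\[
\int_0^{+\infty} V(t)\,t^{p-1}\,dt \;\le\; \frac1p\Big(\int_0^{+\infty} V(t)^{1/p}\,dt\Big)^{p}.
\]
We may assume $\int_0^{+\infty} V^{1/p}\,dt < \infty$, since otherwise the right-hand side is $+\infty$ and there is nothing to prove.

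Next I would introduce $W := V^{1/p}$, which is again continuous, nonnegative and decreasing, and let $F(s) := \int_0^s W(t)\,dt$. Then $F \in C^1([0,\infty))$ with $F(0) = 0$, $F' = W \ge 0$, and $F$ is concave because its derivative $W$ is decreasing. The one substantive point of the proof is the elementary consequence of concavity that a concave function vanishing at the origin lies above the radial tangent line:
\[
F(s) \;=\; \int_0^s F'(t)\,dt \;\ge\; \int_0^s F'(s)\,dt \;=\; s\,F'(s) \;=\; s\,W(s)
\qquad\text{for all }s\ge 0,
\]
where we used $F'(t) = W(t) \ge W(s) = F'(s)$ for $t \le s$.

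With this in hand the rest is bookkeeping. Since $p-1 > 0$ and all quantities are nonnegative, for every $s \ge 0$,
\[
\frac{d}{ds}\,F(s)^{p} \;=\; p\,F(s)^{p-1}\,W(s) \;\ge\; p\,\big(s\,W(s)\big)^{p-1}W(s) \;=\; p\,s^{p-1}\,W(s)^{p} \;=\; p\,s^{p-1}\,V(s).
\]
Integrating this from $0$ to $+\infty$, using $F(0) = 0$ and the monotonicity of $s \mapsto F(s)^{p}$ to justify the fundamental theorem of calculus (the integrability of the right-hand side is then automatic), we obtain
\[
\Big(\int_0^{+\infty} V^{1/p}\,dt\Big)^{p} \;=\; F(+\infty)^{p} \;\ge\; p\int_0^{+\infty} s^{p-1}\,V(s)\,ds,
\]
which is precisely the asserted inequality after dividing by $p$ and rewriting $p = n/(n-1)$.

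I do not expect any real obstacle here: the entire content is the concavity estimate $F(s) \ge s F'(s)$, and everything else is routine manipulation. The only points requiring a word of care are the reduction to the case $\int V^{1/p} < \infty$ and the justification that $F^{p}$ is $C^1$ with the expected derivative, both of which are immediate since $p>1$ and $W$ is continuous.
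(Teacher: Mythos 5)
Your proof is correct and is essentially the same argument as the paper's: both reduce the claim to the single monotonicity estimate $\int_0^s V^{(n-1)/n}\,dt \ge s\,V(s)^{(n-1)/n}$ and then compare derivatives of the two sides (the paper packages this as showing the difference $v(T)$ of the two sides has $v'\le 0$ and $v(0)=0$, while you differentiate $F^p$ directly). The change of variables $p=n/(n-1)$ and the concavity language are just cosmetic repackaging of the same computation.
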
	
\begin{proof}
Consider the function 
$$v(T):=\int_0^{T} V(t) \,t^{1/(n-1)} \,dt - \frac{n-1}{n}\left( \int_0^{T} V(t)^{(n-1)/n} \,dt \right)^{n/(n-1)}.$$
Taking the derivative in $T$ and using the monotonicity of $V$, we have 	
\begin{align*}
v'(T)&=V(T) T^{1/(n-1)}-V(T)^{(n-1)/n} \left( \int_0^{T} V(t)^{(n-1)/n} \,dt \right)^{1/(n-1)}\\
&\le V(T) T^{1/(n-1)}-V(T)^{(n-1)/n} \left( TV(T)^{(n-1)/n}\right)^{1/(n-1)}=0,
\end{align*}
which concludes the proof since $v(0)=0$.
\end{proof}
	
\begin{proof}[Proof of \cref{thm:sobolev}] We follow \cite[Theorem 3]{BoGi}. We divide the proof in two steps.  For ease of notation write $\Omega := \Omega_u$.

\medskip

\noindent \emph{Step 1.} We first prove \eqref{eqn:sobolev-concl1}, \eqref{eqn:sobolev-concl2} for $f \in C^\infty(\Omega \cap B_1)$.  Let $k$ be so that
\[
\max\left\{ \haus^d(\{ f > k\} \cap B_\gamma), \haus^d( \{ f < k \} \cap B_\gamma) \right\} \leq \frac12\, \haus^d( B_\gamma \cap \Omega),
\]
and let $f_1:= (f-k)^+$ and $f_2:= (k-f)^+$.  Note that by our choice of $k$ we have 
\[
\haus^d( \{f_i > t\} \cap B_\gamma ) \leq \haus^d(\{f_i \leq t \} \cap B_\gamma )\,,\qquad \text{for all $t > 0$, $i = 1, 2$,}
\]
so that, by the Neumann-isoperimetric inequality of \cref{thm:niso} we get
\begin{equation}\label{eqn:neu_iso}
\haus^d( \{ f_i > t \} \cap B_\gamma )^{(d-1)/d}\leq C(d) \,\haus^{d-1}( \del \{ f_i > t \})\,,\qquad \text{for all $t > 0$, $i = 1, 2$.}
\end{equation}
Therefore, by the coarea formula, \eqref{eqn:sobolev-1} and \eqref{eqn:neu_iso}, we have
\begin{align*}
\int_{\Omega \cap B_1} |Df_i|= \int_0^\infty \haus^{d-1}( \del \{ f_i > t \})\,dt
&\geq C(d) \int_0^\infty \haus^d( \{ f_i > t \} \cap B_\gamma )^{\frac{d-1}d} \,dt\\
&\geq C(d) \bigg( \int_0^\infty \haus^d( \{ f_i > t \}\cap B_\gamma)\,t^{\frac1{d-1}} dt \bigg) ^{\frac{d-1}d} \\
&= C(d) \bigg( \int_{\Omega \cap B_\gamma} |f_i|^{\frac{d}{d-1}} \bigg)^{\frac{d-1}d}.
\end{align*}
Since by construction $|f - k| = |f_1| + |f_2|$ and $|Df| = |Df_1| + |Df_2|$, we get \eqref{eqn:sobolev-concl1}.\smallskip

Finally, we notice that \eqref{eqn:sobolev-concl2} follows by the same argument. In fact, if $f$ is supported inside $B_1$, then we can use the isoperimetric inequality of \cref{thm:iso} to do the same computation as above with $f$ in place of $f_i$.

\medskip

\noindent \emph{Step 2.} Conclusion of the proof.  Take $f \in W^{1,1}(\Omega \cap B_1)$.  If $\spt f \subset B_1$, then by \cref{lem:approx} below we can find an approximating sequence $\phi_i \in C^\infty_c(B_1\setminus\sing(u))$ so that $\phi_i \to f$ in $W^{1,1}(\Omega \cap B_1)$ (see \cref{lem:approx} below).  Moreover, since inequality \eqref{eqn:sobolev-concl1} holds for each $\phi_i$, the converegence $\phi_i \to f$ is also strong in $L^{d/(d-1)}(\Omega \cap B_1)$.  We deduce \eqref{eqn:sobolev-concl2}.\smallskip

We prove \eqref{eqn:sobolev-concl1}.  Let $\zeta$ be a smooth cut-off function supported in $B_1$ which is identically equal to $1$ in $B_{\sfrac12}$.  Pick $\phi_i \in C^\infty_c(B_1\setminus\sing(u))$ so that $\phi_i \to f\zeta$ in $W^{1,1}(\Omega \cap B_1)$ (see \cref{lem:approx}).  For each $i$ there is a constant $c_i$ so that
\begin{equation}\label{eqn:sobolev-3}
\bigg( \int_{\Omega \cap B_{\sfrac\gamma2}} |\phi_i - c_i|^{d/(d-1)} \bigg)^{(d-1)/d} \leq C(d) \int_{\Omega \cap B_{\sfrac12} } |D\phi_i|.
\end{equation}
From \eqref{eqn:sobolev-concl2} we also have that $\phi_i \to f\zeta$ in $L^{d/(d-1)}(\Omega \cap B_1)$.

Now if $\haus^d(\Omega \cap B_{\sfrac\gamma2}) = 0$ then \eqref{eqn:sobolev-concl1} trivially holds with $\sfrac\gamma2$ in place of $\gamma$.  Assume therefore that $\haus^d(\Omega \cap B_{\sfrac\gamma2}) = \theta > 0$.  For any $i >> 1$ we use \eqref{eqn:sobolev-concl2}, \eqref{eqn:sobolev-3} to compute
\begin{align*}
\theta^{(d-1)/d} |c_i|\leq ||c_i||_{L^{d/(d-1)}(\Omega\, \cap B_{\sfrac\gamma2})}
&\leq ||\phi_i - c_i||_{L^{d/(d-1)}(\Omega\, \cap B_{\sfrac\gamma2})} + ||\phi_i||_{L^{d/(d-1)}(\Omega\, \cap B_1)} \\
&\leq C(d)||\phi_i||_{W^{1,1}(\Omega\, \cap B_1)}\leq 2C(d) || f\zeta ||_{W^{1,1}(\Omega\, \cap B_1)}.
\end{align*}
Therefore the constants $c_i$ are uniformly bounded independent of $i$ and (after passing to a subsequence) we can assume that $c_i \to c$.  Recalling our definition of $\zeta$, and convergence $\phi_i \to f\zeta$ in $L^{d/(d-1)}(\Omega \cap B_1)$, we get \eqref{eqn:sobolev-concl2} with $\sfrac\gamma2$ in place of $\gamma$.
\end{proof}

In Step 2 of the proof above we used the following approximation theorem.

\begin{lemma}\label{lem:approx}
Let $u \in W^{1,2}(B_1)$ be a minimizer of $J_{B_1}$ and let $f \in W^{1,p}(\Omega_u \cap B_1)$, for some $1 \leq p < 5$.  Then for any $\theta < 1$ we can find a sequence $\phi_i \in C^\infty_c(B_1 \setminus \sing (u))$ so that $\phi_i \to f$ in $W^{1,p}(\Omega_u \cap B_{1-\theta})$.  If $\spt f \subset B_1$, then we can take $\theta = 0$.
\end{lemma}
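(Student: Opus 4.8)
The plan is to reduce, through a sequence of standard truncations and cut-offs, to approximating a \emph{bounded} $W^{1,p}$ function whose support is a compact subset of $B_1\setminus\sing(u)$, and then to exploit the smoothness of $\reg(u)$. First I would dispose of the behaviour near $\del B_1$: if $\spt f$ is not already compact in $B_1$, I multiply $f$ by a cut-off $\zeta\in C^\infty_c(B_{1-\theta/2})$ with $\zeta\equiv 1$ on $B_{1-\theta}$, which changes nothing on $B_{1-\theta}$ and makes the support compact in $B_1$; so it suffices to treat $f$ with $\spt f$ compact in $B_1$ and to seek convergence in $W^{1,p}(\Omega_u\cap B_1)$. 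Next I replace $f$ by its truncations $f_T:=\max\{\min\{f,T\},-T\}$, which lie in $W^{1,p}$ (chain rule), are bounded by $T$, keep the same support, and converge to $f$ in $W^{1,p}$ as $T\to\infty$ by dominated convergence; so I may also assume $f\in L^\infty$.

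The heart of the matter is removing a neighbourhood of $\sing(u)$ while keeping $W^{1,p}$-convergence. Here I would use that $\dim_{\haus}\sing(u)\le d-5$ and $p<5$, so that the compact set $K:=\sing(u)\cap\spt f$ has $\haus^{d-p}(K)=0$ and hence zero $p$-capacity. This produces cut-offs $\eta_j\in C^\infty_c(\R^d)$ with $0\le\eta_j\le 1$, identically $1$ on a neighbourhood of $K$, supports shrinking to $K$, and $\|D\eta_j\|_{L^p(\R^d)}+\|\eta_j\|_{L^p(\R^d)}\to 0$. Then $g_j:=(1-\eta_j)f$ has support compact in $B_1$ and disjoint from $\sing(u)$ (a point of $\sing(u)\cap\spt g_j$ would lie in $K$, where $g_j$ vanishes near by), and $Dg_j=(1-\eta_j)Df-fD\eta_j$ with $\|fD\eta_j\|_{L^p}\le\|f\|_{L^\infty}\|D\eta_j\|_{L^p}\to 0$ — this is precisely where the preliminary truncation is used — so $g_j\to f$ in $W^{1,p}(\Omega_u\cap B_1)$. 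It therefore remains to approximate a single $g_j$; i.e.\ I may assume in addition that $\spt f$ is compact and disjoint from $\sing(u)$.

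In this final configuration I choose an open $W$ with $\spt f\subset W$ and $\overline W$ compact in $B_1\setminus\sing(u)$; then $\overline W\cap\del\Omega_u$ is contained in the smooth hypersurface $\reg(u)$ (cf.\ \cref{thm:ac-reg}), so near every point of $\overline{\Omega_u}\cap\overline W$ the set $\Omega_u\cap W$ is either open in $\R^d$ or smoothly diffeomorphic to a half-ball. A partition of unity subordinate to such a cover, together with the classical ``reflect and mollify'' approximation on each chart, produces $\psi_i$ that are smooth up to $\del\Omega_u$ with $\psi_i\to f$ in $W^{1,p}(\Omega_u\cap B_1)$; extending the $\psi_i$ smoothly across $\reg(u)$ and multiplying by a fixed cut-off in $C^\infty_c(W\setminus\sing(u))$ equal to $1$ near $\spt f$ yields $\phi_i\in C^\infty_c(B_1\setminus\sing(u))$ converging to $f$. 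A diagonal argument over the three reduction parameters (the $\del B_1$ cut-off, the truncation level $T$, the index $j$) then finishes the proof.

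I expect the cut-off near $\sing(u)$ to be the main obstacle, for two reasons: it is the only place where the dimension bound $\dim_{\haus}\sing(u)\le d-5$ and the hypothesis $p<5$ genuinely enter (through $\mathrm{Cap}_p(\sing(u))=0$), and it requires controlling $\|fD\eta_j\|_{L^p}$, which cannot be done for a general $W^{1,p}$ function and is what forces the truncation step to come first. Everything else — the localization near $\del B_1$, the truncation, the approximation near the smooth part of the free boundary, and the diagonal extraction — is routine.
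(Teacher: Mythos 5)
Your proposal is correct and follows essentially the same route as the paper: truncate to reduce to bounded $f$, excise a shrinking neighborhood of $\sing(u)$ using the dimension bound $\dim_{\haus}\sing(u)\le d-5$ together with $p<5$, and then approximate near the smooth part $\reg(u)$ by a partition of unity and standard Sobolev extension/mollification. The only cosmetic difference is that you invoke zero $p$-capacity of $\sing(u)$ abstractly, whereas the paper constructs the cutoff $\eta=\inf_i\eta_i$ explicitly from a finite cover with $\sum_i s_i^{d-p}\le\eps$ and estimates $\|f\,D\eta\|_{L^p}$ directly; these are the same argument, with the explicit construction being precisely a proof of the capacity statement in this setting.
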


\begin{proof}
As usual we let $\Omega:= \Omega_u$. For any $k \in \R$, note that $f_k := \min\{ k, \max\{ f, -k \}\} \in W^{1,p}(\Omega \cap B_1)$, and
\[
||f - f_k||_{W^{1,p}(\Omega \cap B_1)}^p \leq \int_{\{|f| > k\}} \left(|f|^p + |Df|^p \right)\to 0
\]
as $k \to \infty$.  Therefore there is no loss in assuming $f \in L^\infty(\Omega \cap B_1)$.

We next claim we can additionally assume that $\spt f \cap B_\eps(\sing(u)) = \emptyset$ for some $\eps > 0$.  Trivially, since $\sing(u) \subset \del\Omega$, we have
\[
\int_{\Omega \cap B_1 \cap B_\eps(\sing(u))} \left(|f|^p + |Df|^p\right) \leq \tau(\eps)^p,
\]
for for some $\tau(\eps) \to 0$ as $\eps \to 0$.

Since $\sing (u)$ has Hausdorff dimension $\leq d-5$ and $p < 5$, for any $\eps > 0$ we can find a finite cover $\{B_{s_i}(y_i)\}_{i=1}^M$ of $\sing(u) \cap \overline{B_1}$ satisfying $\sum_i s_i^{d-p} \leq \eps$ and $y_i \in \sing(u)$.  For each $i$ choose an $\eta_i \in C^\infty$ satisfying
\[
\eta_i \equiv 0 \text{ in } B_{s_i}(y_i), \quad \eta \equiv 1 \text{ outside } B_{2s_i}(y_i), \quad |D\eta_i| \leq 10/s_i
\]
Define $\eta = \inf \eta_i$.  Then $\eta$ is a Lipschitz function satisfying
\[
\spt \eta \cap \sing (u)= \emptyset, \quad \eta \equiv 1 \text{ outside } B_{2\eps}(\sing(u)), \quad |D \eta(x)| \leq \sup_i \frac{10}{s_i} 1_{B_{2s_i}(y_i)}(x).
\]

Now $f \eta \in W^{1,p}(\Omega \cap B_1)$, $\spt (f \eta) \cap \sing(u) = \emptyset$, and
\begin{align*}
||f - f \eta||_{W^{1,p}(\Omega \cap B_1)} 
&\leq \tau(2\eps) + \left(\int_{\Omega \cap B_1} |f D\eta|^p \right)^{1/p} \\
&\leq \tau(2\eps) + c(d) ||f||_{L^\infty(\Omega \cap B_1)} \sum_i s_i^{d-p} \\
&\leq \tau(2\eps) + c(d) ||f||_{L^\infty(\Omega \cap B_1)} \eps \, ,
\end{align*}
which $\to 0$ as $\eps \to 0$.  This proves our claim.

We proceed assuming $\spt f \cap B_\eps(\sing(u)) = \emptyset$, for some $\eps > 0$.  Since $\del\Omega \setminus \sing(u)$ is smooth, after perturbing $B_{1-\theta}$ to a smooth domain $B_{1-\theta/2} \supset U \supset B_{1-2\theta}$, we can assume that $\del(U \cap \Omega)$ is locally-Lipschitz in $B_1 \setminus B_{\eps/4}(\sing(u))$.

Choose a finite cover $\{B_{r_i}(x_i)\}_{i=1}^N$ of $\del (\Omega \cap U) \setminus B_{\eps}(\sing (u))$ such that $x_i \in \del(\Omega \cap U)$, $B_{2r_i}(x_i) \subset B_1 \setminus B_{\eps/2}(\sing (u))$, and each $\del (\Omega \cap U) \cap B_{2r_i}(x_i)$ is a Lipschitz graph.  Pick smooth functions $\zeta_0, \ldots, \zeta_N$ such that
\[
\spt \zeta_i \subset B_{2r_i}(x_i), \quad \spt \zeta_0 \subset \Omega \cap B_1 \setminus B_{\eps/2}(\sing (u)), \quad \sum_{i=0}^N \zeta_i = 1 \text{ on } \Omega \cap U \setminus B_{\eps}(\sing(u)).
\]
By the usual extension/approximation theorems for Sobolev functions applied to each $f \zeta_i$, we can find a sequence of smooth functions $\phi_k \in C^\infty_c(B_1 \setminus \sing(u))$ so that $\phi_k \to f$ in $W^{1,p}(\Omega \cap U)$.  This proves the first assertion of the Lemma, with $2\theta$ in place of $\theta$.  The second assertion follows because $\spt f \subset B_1$ implies $\spt f \subset B_{1-\theta}$ for some $\theta > 0$.
\end{proof}

 \section{De\,Giorgi-Nash-Moser theory}\label{ss:4}

By nowadays standard iteration methods (see e.g. \cite[Theorems 5 and 6]{BoGi}), the inequalities of \cref{ss:4} imply the standard integral/Harnack estimates of De\,Giorgi-Nash-Moser.  For the reader's convenience, in \cref{ss:john} we reproduce a proof (different from \cite{BoGi} and originally due to L. Simon)  of the John-Nirenberg lemma adapted to our setting.

\begin{theorem}[Subsolutions]\label{thm:sub}
Let $u \in W^{1,2}(B_{R_1})$ be a minimizer of $J_{B_{R_1}}$. Suppose $f \in W^{1,2}(\Omega_u \cap B_1)$ is non-negative and satisfies
\begin{equation}\label{eqn:sub-hyp}
\int_{\Omega_u} D f \cdot D\phi \leq 0 
\end{equation}
for all non-negative $\phi \in C^1_c(B_1\setminus \sing(u))$.  Then
\begin{equation}\label{eqn:sub-concl}
\sup_{\Omega_u \cap B_\theta} f \leq c(\theta, p, d) \left( \int_{\Omega_u \cap B_1} f^p \right)^{1/p},
\end{equation}
for all $0 < p < \infty$ and all $\theta < 1$.
\end{theorem}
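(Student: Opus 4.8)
The plan is to prove \eqref{eqn:sub-concl} by Moser iteration, using the Neumann–Sobolev inequality \cref{cor:sobolev} as the gain-of-integrability step and the density result \cref{lem:approx} to legitimise the non-smooth test functions. Since $f\in W^{1,2}(\Omega_u\cap B_1)\subset L^2$, the right-hand side of \eqref{eqn:sub-concl} is finite for $p=2$, and it suffices to prove the estimate for $p=2$ and then extract the general $0<p<\infty$ case by the standard interpolation–absorption argument (last paragraph). We may also assume $f\in L^\infty$: the truncation $f_M:=\min\{f,M\}$ is again a non-negative subsolution in the sense of \eqref{eqn:sub-hyp} — this follows from testing \eqref{eqn:sub-hyp} against $G'(f)\phi$ for a smooth concave increasing $G$ approximating $s\mapsto\min\{s,M\}$ and using $G''\le0$, $\phi\ge0$ — so we run the iteration for $f_M$ with constants independent of $M$ and let $M\to\infty$ by monotone convergence. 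Finally, note that after the homogeneous rescaling $u\mapsto\rho^{-1}u(\rho\,\cdot)$ (a minimizer on $B_{R_1/\rho}\supset B_{R_1}$) \cref{cor:sobolev} yields the \emph{scale-invariant} inequality $\big(\int_{\Omega_u\cap B_\rho}|g|^{2\chi}\big)^{1/\chi}\le C(d)\int_{\Omega_u\cap B_\rho}|Dg|^2$ for every $\rho\le1$ and every $g\in W^{1,2}(\Omega_u\cap B_\rho)$ supported in $B_\rho$.

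\textbf{Caccioppoli inequality for powers.} Fix $\beta\ge1$, radii $\theta\le r'<r\le1$, and a cutoff $\zeta\in C^\infty_c(B_r)$ with $\zeta\equiv1$ on $B_{r'}$ and $|D\zeta|\le 2/(r-r')$. The function $\phi=\zeta^2f^{2\beta-1}$ is non-negative, lies in $W^{1,2}(\Omega_u\cap B_1)$ (using $f\in L^\infty$), and is supported in $B_1$, hence is admissible in \eqref{eqn:sub-hyp} after approximation by functions in $C^\infty_c(B_1\setminus\sing(u))$ via \cref{lem:approx} (here $p=2<5$). Expanding $D\phi=2\zeta f^{2\beta-1}D\zeta+(2\beta-1)\zeta^2 f^{2\beta-2}Df$ in \eqref{eqn:sub-hyp} and using Young's inequality to absorb $\int\zeta^2 f^{2\beta-2}|Df|^2$ gives, since the operator is the plain Laplacian,
\[
\int_{\Omega_u}\zeta^2\,|D(f^\beta)|^2\ \le\ C\int_{\Omega_u}|D\zeta|^2 f^{2\beta},
\]
with $C$ a dimensional constant \emph{independent of $\beta$}. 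Combining with $|D(\zeta f^\beta)|^2\le 2\zeta^2|D(f^\beta)|^2+2|D\zeta|^2f^{2\beta}$ and the rescaled \cref{cor:sobolev} produces the reverse–H\"older inequality
\[
\Big(\int_{\Omega_u\cap B_{r'}}f^{2\beta\chi}\Big)^{1/\chi}\ \le\ \frac{C(d)}{(r-r')^2}\int_{\Omega_u\cap B_{r}}f^{2\beta}.
\]

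\textbf{Iteration and the case $0<p<\infty$.} Put $\beta_k=\chi^k$, $r_k=\theta+(1-\theta)2^{-k}$, and $\Phi_k=\|f\|_{L^{2\chi^k}(\Omega_u\cap B_{r_k})}$; the reverse–H\"older inequality reads $\Phi_{k+1}\le\big(C(d)(1-\theta)^{-2}4^{k+1}\big)^{1/(2\chi^k)}\Phi_k$, so iterating and using $\sum_k k\chi^{-k}<\infty$ gives $\sup_{\Omega_u\cap B_\theta}f=\lim_k\Phi_k\le C(\theta,d)\,\|f\|_{L^2(\Omega_u\cap B_1)}$; letting $M\to\infty$ proves \eqref{eqn:sub-concl} for $p=2$. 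The identical iteration between arbitrary radii $\theta\le t<s\le1$ gives $\sup_{\Omega_u\cap B_t}f\le C(d)(s-t)^{-d/2}\|f\|_{L^2(\Omega_u\cap B_s)}$. For $0<p<2$, estimating $\|f\|_{L^2(\Omega_u\cap B_s)}^2\le(\sup_{\Omega_u\cap B_s}f)^{2-p}\|f\|_{L^p(\Omega_u\cap B_1)}^p$ and applying Young's inequality yields $\sup_{\Omega_u\cap B_t}f\le\tfrac12\sup_{\Omega_u\cap B_s}f+C(d,p)(s-t)^{-d/p}\|f\|_{L^p(\Omega_u\cap B_1)}$, and the standard absorption lemma for such inequalities gives \eqref{eqn:sub-concl}; the case $p\ge2$ follows from $p=2$ and H\"older.

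\textbf{Main obstacle.} The only genuinely delicate point is the admissibility of the test function $\phi=\zeta^2 f^{2\beta-1}$ in \eqref{eqn:sub-hyp}: the weak inequality is assumed only against non-negative $\phi\in C^1_c(B_1\setminus\sing(u))$, while $\phi$ is merely $W^{1,2}$ and does not vanish near $\sing(u)$. This is exactly the scenario \cref{lem:approx} is built for — the restriction $p<5$ there reflects $\dim\sing(u)\le d-5$ — but one must also check the approximating sequence can be taken non-negative (which is preserved by the reflection/extension plus mollification procedure in the proof of \cref{lem:approx}, since $\phi\ge0$), and that $f$ may be truncated so that $f^{2\beta-1}\in W^{1,2}$. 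Everything else is the routine Moser machinery.
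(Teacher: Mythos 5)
The paper's own proof is a single sentence (``follows from \eqref{eqn:sobolev-concl2} and \eqref{eqn:sub-hyp} by well-known iteration methods''), so your overall strategy --- Caccioppoli via \eqref{eqn:sub-hyp}, Sobolev gain via \cref{cor:sobolev}, Moser iteration, interpolation for $0<p<2$, \cref{lem:approx} for admissibility --- is exactly what the paper intends, and most of the details you fill in are correct. However, there is one genuine error: the reduction to $f\in L^\infty$ is wrong as stated. You claim that $f_M:=\min\{f,M\}$ is again a subsolution, ``testing against $G'(f)\phi$ for a smooth concave increasing $G$ approximating $s\mapsto\min\{s,M\}$ and using $G''\le 0$.'' But plug $\psi=G'(f)\phi$ into \eqref{eqn:sub-hyp} and expand: $\int_{\Omega_u} G''(f)|Df|^2\phi+\int_{\Omega_u} G'(f)Df\cdot D\phi\le 0$, so
\[
\int_{\Omega_u} D(G(f))\cdot D\phi \;\le\; -\int_{\Omega_u} G''(f)|Df|^2\phi,
\]
and for \emph{concave} $G$ the right-hand side is $\ge 0$, not $\le 0$. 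The inequality therefore goes the wrong way; this trick shows that a \emph{convex} increasing composite of a subsolution is a subsolution, but $s\mapsto\min\{s,M\}$ is concave. It is not merely a gap in the argument but a false assertion: for instance $f(x)=|x|^2$ is a smooth global subsolution of the Laplacian, yet $\min\{|x|^2,1\}$ is not --- its distributional Laplacian has a negative singular part on the sphere $\{|x|=1\}$. Since you then rely on $f\in L^\infty$ to put $\phi=\zeta^2 f^{2\beta-1}$ into $W^{1,2}$, the rest of your argument does not go through as written.

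The fix is standard (and is presumably what the authors mean by ``well-known iteration methods''): do not truncate $f$ itself, truncate \emph{inside} the test function, as in Gilbarg--Trudinger Theorem 8.17. With $f_M=\min\{f,M\}$ and $\beta\ge 1$, take $\phi=\zeta^2 f_M^{2\beta-2}f$, which is nonnegative and lies in $W^{1,2}(\Omega_u\cap B_1)$ for any $f\in W^{1,2}$ because $f_M$ is bounded and $f\in L^2$. Then $Df\cdot Df_M=|Df_M|^2\ge 0$ a.e., so expanding $D\phi$ in \eqref{eqn:sub-hyp} and absorbing by Young gives a Caccioppoli inequality for $\zeta f_M^{\beta-1}f$ (with constant independent of $M$), and letting $M\to\infty$ by monotone convergence recovers your reverse H\"older estimate for $f^\beta$. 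With that substitution, your Caccioppoli step, the rescaled version of \cref{cor:sobolev}, the $\Phi_k$-iteration, and the interpolation/absorption argument for $0<p<2$ are all correct, and your treatment of the approximation issue near $\sing(u)$ via \cref{lem:approx} is exactly right.
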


\begin{proof}
Follows from \eqref{eqn:sobolev-concl2} and \eqref{eqn:sub-hyp} by well-known iteration methods.
\end{proof}

\begin{theorem}[Supersolutions]\label{thm:super}
Let $u \in W^{1,2}(B_{R_1})$ be a minimizer of $J_{B_{R_1}}$. There is a dimensional constant $\gamma>0$ so that if $f \in W^{1,2}(\Omega_u \cap B_1)$ is non-negative and satisfies
\begin{equation}\label{eqn:super-hyp}
\int_{\Omega_u} Df \cdot D\phi \geq 0
\end{equation}
for all non-negative $\phi \in C^1_c(B_1 \setminus \sing(u))$, then
\begin{equation}\label{eqn:super-concl}
\bigg( \int_{\Omega_u \cap B_\gamma} f^p \bigg)^{{1}/p} \leq c(p, d) \inf_{\Omega_u \cap B_\gamma} f\ \qquad\text{for all}\qquad p\in\Big(0,\frac{d}{d-2}\Big).
\end{equation}
\end{theorem}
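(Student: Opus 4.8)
The plan is to follow the classical De Giorgi–Nash–Moser argument for supersolutions, the only novelty being that all integrations by parts and all Sobolev inequalities must be taken on the domain $\Omega_u \cap B_1$ rather than on a Euclidean ball, using \cref{cor:sobolev} and \cref{lem:approx} to justify the manipulations near $\sing(u)$. First I would reduce to proving the two ``halves'' of \eqref{eqn:super-concl} separately: (a) the reverse Hölder/Moser iteration giving, for small positive exponents, $\big(\int_{\Omega_u\cap B_{r'}} f^p\big)^{1/p}\le c \inf_{\Omega_u\cap B_{r''}} f$ for $0<p<p_0$ and $r''<r'$; and (b) the statement that $\log f$ has bounded mean oscillation on $\Omega_u\cap B_\gamma$, which via the John–Nirenberg lemma (proved in \cref{ss:john}) upgrades (a) from ``some small $p_0>0$'' to ``every $p<d/(d-2)$''. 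Both pieces are obtained by testing \eqref{eqn:super-hyp} with appropriate test functions.

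For the Moser iteration (step a), since $f\ge 0$ and satisfies \eqref{eqn:super-hyp}, I would first replace $f$ by $f+\eps$ to make it bounded below, then test with $\phi=\zeta^2 (f+\eps)^{q}$ for $q<0$, $q\ne -1$, where $\zeta\in C^1_c(B_1\setminus\sing(u))$ is a cutoff; \cref{lem:approx} guarantees such $\zeta$ can be chosen with $\zeta\equiv 1$ on the relevant inner ball and $|D\zeta|$ controlled, because $\sing(u)$ has codimension $\ge 5$. Expanding \eqref{eqn:super-hyp} and using Cauchy–Schwarz gives a Caccioppoli-type bound
\[
\int_{\Omega_u} \zeta^2 |D(f+\eps)^{(q+1)/2}|^2 \le C(q)\int_{\Omega_u} |D\zeta|^2 (f+\eps)^{q+1},
\]
and then \cref{cor:sobolev} applied to $\zeta (f+\eps)^{(q+1)/2}$ converts this into the reverse-Hölder step $\|(f+\eps)^{(q+1)\chi}\|_{L^1}^{1/\chi}\le (\text{const})\|(f+\eps)^{q+1}\|_{L^1}$ on shrinking balls. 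Iterating over a sequence of exponents $q+1=\chi^{-k}p_0$ and radii $\gamma=r_0>r_1>\cdots\downarrow \gamma/2$ produces, after summing a geometric series of the cutoff constants, the bound $\big(\int_{\Omega_u\cap B_{\gamma/2}}(f+\eps)^{p_0}\big)^{1/p_0}\le c\,\inf_{\Omega_u\cap B_{\gamma/2}}(f+\eps)$ (the infimum arising as the limit $q\to-\infty$), and letting $\eps\to 0$ gives (a) with $p=p_0$; Hölder then gives it for all $p\le p_0$.

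To bridge the gap between $p_0$ and $d/(d-2)$ (step b), test \eqref{eqn:super-hyp} with $\phi=\zeta^2(f+\eps)^{-1}$ to get $\int_{\Omega_u}\zeta^2|D\log(f+\eps)|^2\le C\int_{\Omega_u}|D\zeta|^2$; combined with the relative isoperimetric/Poincaré inequality on $\Omega_u$ (which follows from \cref{thm:iso}, \cref{thm:sobolev}) this shows $w:=\log(f+\eps)$ satisfies $\aver{\Omega_u\cap B_r(x)}|w-\bar w_{x,r}|\le C$ for all balls $B_r(x)$ with $B_{2r}(x)\subset B_\gamma\setminus\sing(u)$ — i.e. a BMO bound with respect to the intrinsic family of balls. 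The John–Nirenberg lemma of \cref{ss:john} then yields constants $p_1>0$, $c$ with $\big(\aver{\Omega_u\cap B_{\gamma}} e^{p_1 w}\big)\big(\aver{\Omega_u\cap B_{\gamma}} e^{-p_1 w}\big)\le c$, and one more short Moser iteration ``from $p_1$ upward'' (same test function $\zeta^2(f+\eps)^{q}$, now with $q+1\in(0,p_1]$ decreasing the exponent down to $p_1$ from any target $p<d/(d-2)=\chi$) closes the loop: combining the reverse Hölder on $(0,\chi)$ with the $L^{p_1}$–$L^{-p_1}$ comparison gives \eqref{eqn:super-concl} for every $p<d/(d-2)$. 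I expect the main obstacle to be the bookkeeping in step b: verifying that the intrinsic BMO estimate holds uniformly over the correct family of balls inside $\Omega_u$ (including those abutting the regular free boundary) and that the John–Nirenberg statement in \cref{ss:john} is stated in exactly the form needed to feed the final iteration — the Euclidean-ball intuition must be replaced throughout by the relatively open sets $\Omega_u\cap B_r$, and one must make sure the constant $\gamma$ is chosen small enough (depending only on the isoperimetric constants $R_1,C_1,\gamma(d)$ of \cref{ss:2}) that all the isoperimetric inputs are available on every ball used.
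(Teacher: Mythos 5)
Your plan reconstructs the standard De\,Giorgi--Nash--Moser weak Harnack argument, which is exactly what the paper's one-line proof invokes: your step (a) is the content of applying \cref{thm:sub} to the subsolution $(f+\eps)^{-1}$, and your step (b) is the content of \cref{lem:john}. One small remark: \cref{lem:john} as stated already covers the full range $0 < p < \tfrac{d}{d-2}$ (the ``bridging iteration'' you describe at the end is built into the second half of its appendix proof), so once you invoke it you need not separately push from a small $p_1$ up to $\tfrac{d}{d-2}$.
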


\begin{proof}
Follows from \cref{thm:sub} and \cref{lem:john} by a well-known argument.
\end{proof}

 \section{One-sided blow-ups near regular points}\label{ss:5}
 
 In this section we study one-sided blow-ups to $1$-homogeneous minimizers of $J$. 
 
 \begin{assumption}\label{ass:osb} We let $u_0 \in W^{1,2}(B_1)$ be a non-zero minimizer of $J_{B_1}$, and let $u^\mu, v^\mu \in W^{1,2}(B_1)$, $\mu\in \N$, be sequences of functions minimizing $J_{B_1}$, such that 
 \[
 u^\mu \leq  v^\mu\,\text{ in }B_1\,;\quad u^\mu, v^\mu\to u_0\,\text{ in }\,(C^\alpha_{loc} \cap W^{1,2}_{loc})(B_1)\,;\quad u^\mu < v^\mu\,\text{ on } \, \Omega_{u^\mu}\,. 
 \]
\end{assumption}

In this section we will prove the following theorem. The main idea is similar to \cite[Proposition 5.1]{DeJeSh}, however our situation is more general and doesn't follow directly from \cite{DeJeSh}, so we will provide the details of the proof.

\begin{theorem}[One-sided blow-up]\label{thm:osb} Let $u_0, u^\mu, v^\mu$ be as in \cref{ass:osb}.\\ Let the point $p \in \Omega_0$ be fixed, and define 
 \[
  \lambda_\mu:= v^\mu(p) - u^\mu(p)\qquad\text{and}\qquad w^\mu := \lambda_\mu^{-1}(v^\mu - u^\mu)\in W^{1,2}_{loc}(B_1) \,.
 \]
Then, there is a function $w \in C^{2,\alpha}(\overline{\Omega}_{u_0} \setminus \sing(u_0)\cap B_1) \cap C^\infty(\Omega_{u_0} \cap B_1)$ so that $w^\mu \to w$ in $C^\infty_{loc}(\Omega_{u_0} \cap B_1)$, and $w$ solves:
 \begin{equation}\label{eqn:lin_1}
 \begin{cases}
 \Delta w = 0 &\text{ in } \Omega_{u_0} \cap B_{1}\\
 D_\nu w + H w = 0 &\text{ on } \reg(u_0)\cap B_{1}\\
 w \geq 0 & \text{ in } \Omega_{u_0} \cap B_{1}\,,
 \end{cases}
 \end{equation}
 where $\nu$ and $H$ denote respectively the outer unit normal and the scalar mean curvature of $\reg(u_0)\subset \del \Omega_{u_0}$. 
\end{theorem}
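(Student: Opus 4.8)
The plan is to derive the linearized system \eqref{eqn:lin_1} by combining three ingredients: uniform interior Schauder estimates for $w^\mu$ away from the free boundary, boundary Schauder estimates for $w^\mu$ near $\reg(u_0)$, and a careful Taylor expansion of the overdetermined condition $D_\nu u = -1$ to produce the Robin/Neumann boundary term. First I would observe that since $u^\mu, v^\mu \to u_0$ in $C^\alpha_{loc} \cap W^{1,2}_{loc}$ and $u_0$ is a nonzero minimizer, the Alt-Caffarelli $\eps$-regularity theorem (\cref{thm:ac-reg}) upgrades this convergence: near any point of $\reg(u_0)$, for $\mu$ large the free boundaries $\del\Omega_{u^\mu}$, $\del\Omega_{v^\mu}$ are $C^{3,1}$ graphs converging to the corresponding graph for $u_0$, and $u^\mu, v^\mu \to u_0$ in $C^{3,1}_{loc}(\overline{\Omega_{u_0}} \setminus \sing(u_0))$; in the interior $\Omega_{u_0}$ we even have $C^\infty_{loc}$ convergence by elliptic regularity. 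Both $u^\mu$ and $v^\mu$ are harmonic on their positivity sets, so $v^\mu - u^\mu$ is harmonic on $\Omega_{u^\mu}$, hence so is $w^\mu$; and $w^\mu \geq 0$ there because $u^\mu \leq v^\mu$. The normalization $w^\mu(p) = 1$ is the device that prevents the family $\{w^\mu\}$ from degenerating to zero.

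Next I would establish locally uniform bounds on $w^\mu$. In the interior, on a compact $K \subset \Omega_{u_0}$, the functions $w^\mu$ are positive and harmonic with $w^\mu(p) = 1$, so the Harnack inequality for harmonic functions gives $\sup_K w^\mu \leq C(K)$ once $K$ is connected to $p$ inside $\Omega_{u_0}$ (using that $\Omega_{u_0}$ is connected if $u_0$ is a suitable global object, or arguing component-by-component in the general local case — $p$ fixes the relevant component). Interior Schauder/elliptic estimates then give uniform $C^k_{loc}(\Omega_{u_0})$ bounds, so after passing to a subsequence $w^\mu \to w$ in $C^\infty_{loc}(\Omega_{u_0} \cap B_1)$ with $w$ harmonic and $w \geq 0$, $w(p) = 1$ — so $w \not\equiv 0$. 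For the boundary regularity, near a point $q \in \reg(u_0)$ I would write everything in graphical coordinates over the hyperplane tangent to $\del\Omega_{u_0}$ at $q$: in these coordinates $u^\mu$, $v^\mu$ solve $\Delta = 0$ with homogeneous Dirichlet data on their respective graphs, and the difference quotient $w^\mu$ solves a uniformly elliptic equation (with coefficients converging to those of the flattened $u_0$-problem) with an oblique/Robin-type boundary condition obtained by subtracting the two free-boundary conditions. Uniform boundary Schauder estimates (e.g. for the oblique derivative problem, using the $C^{3,1}$ control on the graphs from \cref{thm:ac-reg}) then give uniform $C^{2,\alpha}$ bounds for $w^\mu$ up to $\reg(u_0)$, so the convergence $w^\mu \to w$ improves to $C^{2,\alpha}_{loc}(\overline{\Omega_{u_0}} \setminus \sing(u_0))$ and $w \in C^{2,\alpha}(\overline{\Omega}_{u_0}\setminus\sing(u_0) \cap B_1)$.

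The heart of the matter is identifying the boundary condition $D_\nu w + Hw = 0$ on $\reg(u_0)$. I would do this by a normal-coordinate computation: let $d^\mu$, $d_0$ denote signed distances to $\del\Omega_{u^\mu}$, $\del\Omega_{u_0}$ respectively (negative inside). Since $u^\mu = 0$ on $\del\Omega_{u^\mu}$ with $D_\nu u^\mu = -1$, a Taylor expansion along the normal gives $u^\mu(x) = -d^\mu(x) + O(|d^\mu(x)|^2)$ near $\reg(u^\mu)$, uniformly in $\mu$ (with $C^{3,1}$ control on the error), and similarly for $v^\mu$. Writing $\del\Omega_{v^\mu}$ as a normal graph of height $h^\mu$ over $\del\Omega_{u^\mu}$, one has $v^\mu(x) - u^\mu(x) \approx h^\mu(\pi(x)) + (\text{curvature terms})\cdot\big(\text{stuff}\big) + \dots$; dividing by $\lambda_\mu$ and passing to the limit shows that $w$ restricted to $\reg(u_0)$ equals the limiting normal-displacement function $\phi := \lim \lambda_\mu^{-1} h^\mu$, and that the harmonicity of $v^\mu$ together with the second-order term in the distance expansion forces $D_\nu w = -H\phi = -Hw$ on $\reg(u_0)$ in the limit — i.e. exactly the Jacobi-field (linearized overdetermined) condition. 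This is precisely the computation of \cite[Proposition 5.1]{DeJeSh}, which I would follow closely; the only additional care needed here is that we are working near a general regular point of a possibly-singular $u_0$ rather than assuming $u_0$ regular away from the origin, but since all estimates are local on $\reg(u_0)$ and $\sing(u_0)$ is excluded by the cutoffs in \cref{lem:approx}-type arguments, this causes no real difficulty. I expect this boundary-term identification — getting the curvature factor $H$ correct, with uniform-in-$\mu$ error control — to be the main obstacle; the interior and boundary Schauder machinery, while technical, is routine given \cref{thm:ac-reg}.
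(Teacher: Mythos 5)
Your proposal follows the same DeJeSh blueprint as the paper, but diverges at two technical pivots, and one of them leaves a real gap. The paper's key device is the \emph{hodograph transform}: near a regular boundary point both $u^\mu$ and $v^\mu$ are transformed into functions $H^\mu_u, H^\mu_v$ on the \emph{fixed} half-ball $\{y_d>0\}$, each solving the same quasilinear elliptic PDE with an oblique (Neumann-type) boundary condition. Their difference $\tilde w^\mu = H^\mu_u - H^\mu_v$ then solves a genuinely linear oblique-derivative problem on a fixed domain, so the boundary Harnack inequality and Schauder estimates for oblique problems apply verbatim and immediately give uniform $L^\infty$ control and $C^{2,\alpha}$ bounds \emph{up to the boundary}. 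Your proposal instead works directly with $w^\mu = \lambda_\mu^{-1}(v^\mu - u^\mu)$ in graphical coordinates and asserts that it ``solves a uniformly elliptic equation with an oblique/Robin-type boundary condition obtained by subtracting the two free-boundary conditions''; but $u^\mu$ satisfies its Dirichlet/overdetermined condition on $\partial\Omega_{u^\mu}$ while $v^\mu$ satisfies its on the \emph{different} hypersurface $\partial\Omega_{v^\mu}$, and $w^\mu$ is only defined on $\Omega_{u^\mu}$, where its trace is the nontrivial quantity $\lambda_\mu^{-1}v^\mu|_{\partial\Omega_{u^\mu}}$. You cannot subtract boundary conditions living on different surfaces without first transporting one problem to the other's domain, which is precisely what the hodograph transform does. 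This is the concrete step your sketch elides, and it is exactly where the uniform boundary $L^\infty$/Harnack bound — the a priori estimate Schauder theory needs as input — comes from. Your interior Harnack argument gives interior bounds only; it does not propagate to $\partial\Omega_{u_0}$ on its own.

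The second divergence is milder. For the Robin boundary condition $D_\nu w + Hw = 0$, you propose a pointwise normal-coordinate Taylor expansion involving signed distance functions and a normal-graph height $h^\mu$, tracking curvature terms explicitly. The paper instead proves the boundary condition in weak form: it pairs $\Delta\phi$ against $v^\mu - u^\mu$ for a fixed test function $\phi \in C^1_c$, integrates by parts using $u^\mu|_{\partial\Omega_{u^\mu}}=0$ and $D_\nu u^\mu = -1$ (and likewise for $v^\mu$), and converts the resulting boundary integrals into integrals over $\partial\Omega_0$ weighted by the graph-height difference $\xi^\mu - \eta^\mu$ and the Jacobian of the normal-graph map, whose first variation produces the mean curvature $H$. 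This sidesteps the second-order distance-expansion bookkeeping and needs only first-order graph information, which is already controlled by the hodograph step. Your Taylor-expansion route can be made rigorous (it is the classical Jacobi-field computation) but requires uniform $C^2$ control on $u^\mu, v^\mu$ up to the moving boundaries, which again loops back to the estimate your sketch does not quite establish. If you incorporate the hodograph transform to get the uniform boundary estimates, the rest of your argument goes through; without it there is a genuine gap at the boundary Schauder step.
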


\begin{remark}
Recall that if $u_0 \in W^{1,2}_{loc}(\R^d)$ is a global-minimizer, then by \cref{lem:H} $H \leq 0$ on $\reg(u_0)$.	
\end{remark}	
Combining \eqref{eqn:lin_1} and the theory developed in the previous sections, we can prove the following key estimate.

\begin{proposition}[Harnack inequality]\label{thm:harnack}
Let $u_0 \in W^{1,2}_{loc}(\R^d)$ be a global minimizer of $J_{\R^d}$, and $w\in C^{2,\alpha}(\overline{\Omega}_{u_0} \setminus \sing(u_0)\cap B_1) \cap C^\infty(\Omega_{u_0} \cap B_1)$ be a solution of \eqref{eqn:lin_1}.  There exist dimensional constants $C,\gamma>0$ such 
 \begin{equation}\label{eqn:lin_2}
 \int_{\Omega_0 \cap B_\gamma} w \leq C \inf_{\Omega_0 \cap B_\gamma} w.
 \end{equation}
\end{proposition}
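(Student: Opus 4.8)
The plan is to recognize $w$ as a non-negative distributional supersolution of the Neumann-Laplacian on $\Omega_0 = \Omega_{u_0}$ in the sense required by \cref{thm:super}, and then simply invoke that theorem. The only real work is to check the weak formulation \eqref{eqn:super-hyp}: that is, to show $\int_{\Omega_0} Dw \cdot D\phi \geq 0$ for all non-negative $\phi \in C^1_c(B_1 \setminus \sing(u_0))$, using the classical PDE \eqref{eqn:lin_1} together with the sign condition $H \leq 0$ from \cref{lem:H}. Once this is established, \eqref{eqn:lin_2} is immediate from \eqref{eqn:super-concl} (with $p = 1 < d/(d-2)$, taking the same dimensional $\gamma$).

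First I would fix a non-negative test function $\phi \in C^1_c(B_1 \setminus \sing(u_0))$, so that $\spt\phi$ is a compact subset of $B_1$ avoiding the singular set, and hence $\overline{\Omega}_0 \cap \spt\phi \subset \overline{\Omega}_0 \setminus \sing(u_0)$, a region where $\del\Omega_0$ is smooth and $w \in C^{2,\alpha}$ up to the boundary. On this region $w$ is a classical solution, so I can integrate by parts honestly:
\begin{align*}
\int_{\Omega_0} Dw \cdot D\phi = -\int_{\Omega_0} \phi\, \Delta w + \int_{\reg(u_0)} \phi\, D_\nu w \, d\haus^{d-1} = \int_{\reg(u_0)} \phi \,(-Hw)\, d\haus^{d-1},
\end{align*}
where I used $\Delta w = 0$ in $\Omega_0$ and the Neumann-type condition $D_\nu w = -Hw$ on $\reg(u_0)$ from \eqref{eqn:lin_1}. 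Since $w \geq 0$, $\phi \geq 0$, and $H \leq 0$ on $\reg(u_0)$ (by \cref{lem:H}, as $u_0$ is a global minimizer), the boundary integrand $-Hw\phi \geq 0$, giving \eqref{eqn:super-hyp}. A small technical point: to justify the integration by parts one should exhaust $\Omega_0 \cap \spt\phi$ by smooth subdomains (or truncate near $\reg(u_0)$ and pass to the limit using the $C^{2,\alpha}$ regularity of $w$ and the Lipschitz nature of $\del\Omega_0 \setminus \sing(u_0)$), which is routine.

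With \eqref{eqn:super-hyp} verified, the hypotheses of \cref{thm:super} are met (the regularity $w \in C^\infty(\Omega_0 \cap B_1) \cap C^{2,\alpha}(\overline{\Omega}_0 \setminus \sing(u_0) \cap B_1)$ comfortably implies $w \in W^{1,2}(\Omega_0 \cap B_1)$ after multiplying by a cutoff, or one applies \cref{thm:super} on a slightly smaller ball $B_{1-\delta}$ and rescales; alternatively one works on $B_{R_1}$-scaled copies as the statement of \cref{thm:super} permits). Applying \eqref{eqn:super-concl} with $p = 1$ yields
\[
\int_{\Omega_0 \cap B_\gamma} w \;\leq\; c(1,d)\, \inf_{\Omega_0 \cap B_\gamma} w,
\]
which is exactly \eqref{eqn:lin_2} with $C = c(1,d)$ and $\gamma$ the dimensional constant from \cref{thm:super}. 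I do not expect any genuine obstacle here — the content is entirely in the Sobolev/Moser machinery of \cref{ss:3,ss:4}, and the present proposition is just the bookkeeping that turns the geometric PDE \eqref{eqn:lin_1} plus $H \leq 0$ into the weak supersolution inequality the Harnack theorem consumes. The one place to be slightly careful is making sure the test functions in \eqref{eqn:super-hyp} (which must avoid $\sing(u_0)$) are general enough to capture the supersolution property across the whole free boundary; but since $\sing(u_0)$ has codimension $\geq 5$, it is removable and causes no loss, consistently with how \cref{thm:super} is stated.
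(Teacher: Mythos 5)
Your overall strategy — recognize $w$ as a non-negative weak supersolution in the sense of \eqref{eqn:super-hyp} using the classical PDE \eqref{eqn:lin_1} together with $H\le 0$ from \cref{lem:H}, then invoke \cref{thm:super} with $p=1$ — is exactly the paper's, and the integration-by-parts computation is fine. But there is a genuine gap where you wave at the hypothesis $f\in W^{1,2}(\Omega_{u}\cap B_1)$ of \cref{thm:super}: you claim the stated regularity of $w$ "comfortably implies $w\in W^{1,2}(\Omega_0\cap B_1)$ after multiplying by a cutoff", and this is not justified. The regularity $w\in C^{2,\alpha}(\overline{\Omega}_0\setminus\sing(u_0)\cap B_1)\cap C^\infty(\Omega_0\cap B_1)$ gives no control on the size of $w$ or $Dw$ as one approaches $\sing(u_0)$; a priori $w$ could blow up there faster than $L^2$, so neither $w$ nor $\zeta w$ (for a cutoff $\zeta$ vanishing near the sphere but not near $\sing(u_0)$) is known to be in $W^{1,2}$, and cutting off $\sing(u_0)$ itself destroys the very region where you need the estimate. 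Rescaling to a smaller ball does nothing here since the singular set can sit well inside the ball.

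The paper handles precisely this issue by first truncating: set $w_k=\min\{w,k\}$, which is bounded and still satisfies the distributional supersolution inequality \eqref{eqn:super-hyp}. It then tests that inequality with $(w_k+1)^{-1}\zeta^2$ for $\zeta\in C^1_c(B_1\setminus\sing(u_0))$ (admissible by \cref{lem:approx}) to obtain the Caccioppoli-type bound
\begin{equation*}
\int_{\Omega_0}(w_k+1)^{-2}|Dw_k|^2\zeta^2\le 4\int_{\Omega_0}|D\zeta|^2,
\end{equation*}
and then, since $w_k\le k$ and $\sing(u_0)$ has codimension at least $5$ (hence zero $2$-capacity, so one may take $\zeta_i\to 1$ on $B_{1/2}$ with $\int|D\zeta_i|^2$ bounded), concludes $w_k\in W^{1,2}(\Omega_0\cap B_{1/2})$. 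Only then does it apply \cref{thm:super} to $w_k$ and send $k\to\infty$ by monotone convergence. This truncation plus logarithmic test function is the part your proposal is missing; without it the appeal to \cref{thm:super} is not legitimate.
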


\noindent The rest of this section is devoted to the proofs of \cref{thm:osb} and \cref{thm:harnack}.

\subsection{Proof of Theorem \ref{thm:osb}} We divide the proof in two steps.

\medskip

\noindent \emph{Step 1.} We start by analyzing the behavior of the blow-up sequence at regular points of the free-boundary. So let $u$ be as in \cref{thm:ac-reg}.  Write $\R^{d+1} = \{ (x', x_d, x_{d+1}) \in \R^{d-1} \times \R \times \R \}$.  Choosing $\eps>0$ sufficiently small in \cref{thm:ac-reg}, by \eqref{eqn:ac3}, we can consider the hodograph transform $x'=y'$ and $y_d=u(x)$, to find a function $$H_u : B_{1-2\eps} \cap \{ y_d \geq 0\} \to \R\,,$$ 
satisfying
\begin{equation}\label{eqn:diff-1}
H_u(x', u(x', x_d)) = x_d, \quad\text{and}\quad u(x', H_u(x', y_d)) = y_{d}\,,
\end{equation}
so that the free boundary of $u$ is given by (the graph of) the trace of $H_u$ over the hyperplane $\{y_d=0\}$. Standard calculations yield 
\begin{equation}\label{eqn:diff-1.1}
\begin{cases}
\sum_{i, j} a_{ij}(D H_u) \, D^2_{ij}H_u = 0 &\text{in } B_{1-2\eps} \cap \{ y_d > 0\} \\
D_d\, H_u= B(D_{1} H_u, \ldots,D_{d-1} H_u)&\text{on }B_{1-2\eps} \cap \{ y_d = 0\}\,,
\end{cases}
\end{equation}
with $a_{ij}, B$ analytic and $a_{ij}(D H_u)$ uniformly elliptic.

\smallskip

Next, suppose that $u_0$ is $\eps/2$-flat in a ball $B_1$ with $\eps$ as in the statement of \cref{thm:ac-reg}. 
Then, for $\mu$ sufficiently large, also the functions $u^\mu, v^\mu$ are $\eps$-flat in $B_1$, so we can apply \cref{thm:ac-reg}. Let $H_{u_0}$, $H_u^\mu$ and $H_v^\mu$ be the hodograph transforms of $u_0$, $u_\mu$ and $v_\mu$ on $B_{1-2\eps}^+$; we set for simplicity $H_0:=H_{u_0}$.  Since $u^\mu \leq v^\mu$, we have 
$$H_u^\mu(x',0) \geq H_v^\mu(x',0)\quad\text{for every}\quad (x',0)\in B_{1-2\eps}\cap \{y_d=0\}.$$  From \cref{thm:ac-reg} we can also assume that $H_u^\mu, H_v^\mu\to H_0$ in $C^{3,1}(B_{1-2\eps})$.

Since $a_{ij}, B$ (in \eqref{eqn:diff-1.1}) are analytic functions of $D H_u$, we can use the fundamental theorem of calculus to deduce that the difference 
$$\tilde w^\mu = H_u^\mu - H_v^\mu$$ 
solves a PDE of the form
\begin{equation}\label{eqn:diffeq}
\begin{cases}
\sum_{i, j} \tilde a_{ij} D^2_{ij} \tilde w^\mu = 0 & \text{ in }B_{\sfrac56}^+:=B_{\sfrac56}\cap \{y_d>0\}\,,\\
D_d w^\mu = \sum_i \tilde b_i D_i \tilde w^\mu & \text{ on }B_{\sfrac56}\cap \{y_d=0\}\,,
\end{cases}
\end{equation}
where $\tilde a_{ij}$ is uniformly elliptic and depends analytically on $D H_u^\mu, D^2 H_u^\mu, D H_v^\mu, D^2 H_v^\mu$, and where $\tilde b_i$ are analytic functions of $D H_u^\mu, D H_v^\mu$.  In particular, by \cref{thm:ac-reg}, $\tilde a_{ij}$, $\tilde b_i$ have (respectively) uniform $C^\alpha$ and  $C^{1,\alpha}$ bounds, depending only on the dimension $d$.  Using the Harnack inequality and Schauder theory for strong solutions with oblique boundary conditions (see for instance \cite[Theorem 5.2]{DeJeSh}), we get
\begin{equation}\label{eqn:diff-1.4}
\sup_{B_{\sfrac34}^+} \tilde w^\mu \leq C(d) \inf_{B_{\sfrac34}^+} \tilde w^\mu \qquad \text{and}\qquad ||\tilde w^\mu||_{C^{2,\alpha}(B_{\sfrac12}^+)} \leq C(d) ||\tilde w^\mu||_{L^\infty(B_{\sfrac34}^+)}.\smallskip
\end{equation}
\noindent Next, let $(x', x_d) \in \{ v^\mu > 0 \} \cap B_{\sfrac12}$ and $y_{d} := u^\mu(x', x_d)$.  Then $(x', y_d) \in B_{\sfrac34}^+$ and 
\begin{align}
v^\mu(x', x_d) - u^\mu(x', x_d)&=v^\mu(x', H_u^\mu(x', y_d)) - y_d\notag\\
&=v^\mu(x', H_u^\mu(x', y_d)) - v^\mu(x', H_v^\mu(x', y_d))\notag\\
&= \int_{H_v^\mu(x', y_d)}^{H_u^\mu(x', y_d)} D_d v^\mu(x', t)\, dt \,. \label{eqn:diff-1.2}
\end{align}
Combined with \eqref{eqn:diff-1.4} and the fact $v^\mu$ satisfies \eqref{eqn:ac3}, the above \eqref{eqn:diff-1.2} implies
\begin{equation}\label{eqn:diff-2}
\sup_{B_{\sfrac12}} (v^\mu - u^\mu) \leq C(d) \big(v^\mu(0, 1/8) - u^\mu(0, 1/8)\big),
\end{equation}
\begin{equation}\label{eqn:diff-3}
||H_u^\mu - H_v^\mu||_{C^{2,\alpha}(B_{\sfrac12}^+)} \leq C(d) \big(v^\mu(0, 1/8) - u^\mu(0, 1/8)\big),
\end{equation}
for a dimensional constant $C(d)>0$.
\smallskip

Let $(x', x_d) \in \Omega_0 \cap B_{\sfrac12}$. Then, for $\mu > 1$ sufficiently large, 
$$(x', x_d) \in \Omega_{u^\mu} \cap B_{\sfrac12} \subset \Omega_{v^\mu}  \cap B_{\sfrac12}.$$ 
As $\mu\to+\infty$, $H_u^\mu(x', y_d) - H_v^\mu(x', y_d) \to 0$ and $v^\mu \to u_0$ smoothly on compact subsets $B_{\sfrac12}\cap\{y_d>0\}$ and $\Omega_0\cap B_{\sfrac12}$, respectively. Thus, for $\mu$ large, we can compute
\begin{align}
&v^\mu(x', x_d) - u^\mu(x', x_d) \nonumber \\
&= \big(H_u^\mu(x', y_d) - H_v^\mu(x', y_d)\big) \,\int_0^1 D_d v^\mu\big(x', H_v^\mu(x', y_d) + s\,(H_u^\mu(x', y_d) - H_v^\mu(x', y_d))\big)\, ds \nonumber \\
&= \big(H_u^\mu(x', y_d) - H_v^\mu(x', y_d)\big) \big(D_d u_0(x', x_d) + \eps_\mu(x', x_d)\big), \label{eqn:diff-4}
\end{align}
for $\eps_\mu(x', x_d) \to 0$. 

\smallskip

By \cref{thm:ac-reg}, we can write
\begin{gather*}
\del \Omega_u^\mu \cap B_{\sfrac34} = \graph_{\del \Omega_0}(\xi^\mu), \quad \del \Omega_v^\mu \cap B_{\sfrac34} = \graph_{\del \Omega_0}(\eta^\mu),
\end{gather*}
where we graph over the normal pointing in the positive $e_d$ direction.  Our convergence $H_u^\mu, H_v^\mu \to H_0$ implies $||\xi^\mu||_{C^{3,1}(B_{\sfrac34})}, ||\eta^\mu||_{C^{3,1}(B_{\sfrac34})} \to 0$ as $\mu \to \infty$.

By elementary geometry, for $x' \in B_{\sfrac12}^{d-1}$ and $y' = x' + H_0(x', 0) \in \del\Omega_0 \cap B_{3/4}$ we can write
\begin{equation}\label{eqn:diff-12}
(\xi^\mu(y') - \eta^\mu(y'))(1+R^\mu(x')) = \frac{H_u^\mu(F^\mu(x')) - H_v^\mu(F^\mu(x'))}{\sqrt{1+|D' H_0(x', 0)|^2}},
\end{equation}
where each $F^\mu : B^{d-1}_{\sfrac12} \to \R^{d-1}$ is a smooth diffeomorphism onto its image, $R^\mu : B_{\sfrac12}^{d-1} \to \R^{d-1}$ is smooth, and
\begin{equation}\label{eqn:diff-13}
||F^\mu - \mathrm{id}||_{C^{2,1}(B_{\sfrac12})} \to 0, \quad ||R^\mu||_{C^{2,1}(B_{\sfrac12})} \to 0,
\end{equation}
and $D' f = \pi_{\R^{d-1}}(D f)$.  Therefore, by \eqref{eqn:diff-2}, \eqref{eqn:diff-3}, \eqref{eqn:diff-12}, \eqref{eqn:diff-13} we have
\begin{equation}\label{eqn:diff-5}
||\xi^\mu - \eta^\mu||_{C^{2,\alpha}(B_{\sfrac12 - \delta_\mu}\cap \del\Omega_0)} \leq C(d) ||H_u^\mu - H_v^\mu||_{C^{2,\alpha}(B_{\sfrac12})}
\end{equation}
where $\delta_\mu \to 0$, and, for any $(x', x_d) \in \del \Omega_0 \cap B_{\sfrac12}$,
\begin{align}\label{eqn:diff-6}
\xi^\mu(x', x_d) - \eta^\mu(x', x_d) 
&= D_d u_0(x', x_d)(H_u^\mu(x', 0) - H_v^\mu(x', 0)) + \eps_\mu'(x', x_d),
\end{align}
where $|v^\mu(0, 1/8) - u^\mu(0, 1/8)|^{-1} \eps_\mu'(x', x_d) \to 0$.

\vspace{3mm}

Assume that $v^\mu(0, 1/8) - u^\mu(0, 1/8) > 0$ for all $\mu$.  Let $\lambda_\mu \in \R$ be any sequence such that
\[
1/\Gamma \leq \lambda_\mu^{-1}(v^\mu(0, 1/8) - u^\mu(0, 1/8)) \leq \Gamma \quad \forall \mu,
\]
for some $\Gamma > 0$.  Define
\[
w^\mu = \lambda_\mu^{-1} (v^\mu - u^\mu), \quad k^\mu = \lambda_\mu^{-1}(H_u^\mu - H_v^\mu), \quad \tau^\mu = \lambda_\mu^{-1}(\xi^\mu - \eta^\mu).
\]

From \eqref{eqn:diff-2}, we have
\begin{equation}\label{eqn:diff-7}
||w^\mu||_{L^\infty(B_{\sfrac12})} \leq c(d, \Gamma).
\end{equation}
By \eqref{eqn:diff-7}, \eqref{eqn:diff-3}, \eqref{eqn:diff-6} after passing to a subsequence we can find $w \in C^\infty(\Omega_0 \cap B_{\sfrac12})$, $k \in C^{2,\alpha}(B_{\sfrac12})$ and $\tau \in C^{2,\alpha}(\del \Omega_0 \cap B_{\sfrac12})$ so that
\begin{gather}
w_\mu \to w \text{ in } C^\infty_{loc}(\Omega_0 \cap B_{\sfrac12}), \quad k_\mu \to k \text{ in } C^{2,\alpha'}(B_{\sfrac12}), \label{eqn:diff-20} \\
\tau^\mu \to \tau \text{ in } C^{2,\alpha'}_{loc}(\del \Omega_0 \cap B_{\sfrac12}) \label{eqn:diff-21}
\end{gather}
for all $\alpha' < \alpha$.  Moreover, from \eqref{eqn:diff-4}, \eqref{eqn:diff-6} we have
\[
w = k D_d u_0 \quad \text{ on } \Omega_0 \cap B_{\sfrac12},
\]
and
\[
\tau = k D_d u_0  \quad \text{ on } \del \Omega_0 \cap B_{\sfrac12}\,.
\]
We deduce that
\begin{equation}\label{eqn:blowup_diff_1}
w \in C^{2,\alpha}(\overline{\Omega_0} \cap B_{\sfrac12}) \quad \text{ and } \quad  w|_{\del \Omega_0 \cap B_{\sfrac12} } = \tau.
\end{equation}

\medskip 
 
 \noindent \emph{Step 2.} Suppose now we have $u_0, u^\mu, v^\mu, \lambda_\mu$, and $w^\mu \in W^{1,2}(B_1)$ as in \cref{ass:osb} and \cref{thm:osb}. Write $\Omega_0 = \Omega_{u_0}$. Fix $U \subset\subset B_1 \setminus \sing (u_0)$.  By \cref{thm:ac-reg}, for $\mu$ sufficiently large we can write  $\del\Omega_{u^\mu} \cap U = \graph_{\del\Omega}(\xi^\mu)$, $\del\Omega_{v^\mu} \cap U = \graph_{\del\Omega}(\eta^\mu)$ with respect to the inner normals.  From \eqref{eqn:diff-7} and the usual Harnack inequality in the interior of $\Omega_0$, we have
 \begin{equation}\label{eqn:diff-22}
 \sup_\mu ||w^\mu||_{L^\infty(U)} < \infty,
 \end{equation}
 and so we can find a non-negative $w \in C^{2,\alpha}(\overline{\Omega_0} \cap U) \cap C^\infty(\Omega_0 \cap U)$ so that $w^\mu \to w$ in $C^\infty_{loc}(\Omega_0 \cap U)$, where we used \eqref{eqn:blowup_diff_1} to obtain the $C^{2,\alpha}$ regularity up to the regular part of the boundary of $\Omega_0$.
 
 Fix $\phi \in C^1_c(U)$.  Since $u^\mu|_{\del\Omega_{u^\mu} \cap U} = 0$ and the outer derivative $D_\nu u^\mu|_{\del\Omega_{u^\mu} \cap U} = -1$ (and the same for $v^\mu$ w.r.t. $\Omega_{v^\mu}$) we compute
 \begin{align*}
 \int \Delta \phi ( v^\mu - u^\mu)
 &= - \int_{\Omega_{v^\mu}} D \phi \cdot D v^\mu + \int_{\Omega_{u^\mu}} D \phi \cdot Du^\mu \\
 &= \int_{\del \Omega_{v^\mu}} \phi - \int_{\del\Omega_{u^\mu}} \phi \\
 &= \int_{\del\Omega_0} \phi(x - \eta^\mu(x) \nu(x)) J \eta^\mu(x) - \phi(x - \xi^\mu(x)\nu(x)) J \xi^\mu(x) 
 \end{align*}
 Here $\nu$ denotes the outer unit normal of $\Omega_0$, and $J\eta^\mu$ is shorthand for the Jacobian of the map $\del\Omega_0 \ni x \mapsto x - \eta^\mu(x) \nu(x)$ (and the same for $\xi^\mu$).
 
 There are functions $\eps_\mu, \eps'_\mu, \eps''_\mu, \eps'''_\mu \to 0$ as $\mu \to \infty$ so that
 \begin{align*}
 &\int \Delta \phi (v^\mu - u^\mu)  \\
 &= \int_{\del\Omega_0} (D_\nu \phi + \eps_\mu)(\xi^\mu - \eta^\mu) (1+\eps'_\mu) + (\phi + \eps''_\mu)(J\eta^\mu - J\xi^\mu) \\
 &= \int_{\del\Omega_0} (D_\nu \phi + \eps_\mu)(\xi^\mu - \eta^\mu) (1 + \eps'_\mu) + (\phi + \eps''_\mu) (H + \eps'''_\mu)(\xi^\mu - \eta^\mu)
 \end{align*}
 where $H = {\rm div}_{\del\Omega_0}(\nu)$ is the mean curvature with respect to the outer normal.
 
 If we divide both sides by $\lambda_\mu$, then by \eqref{eqn:diff-22}, \eqref{eqn:diff-20}, \eqref{eqn:diff-21}, \eqref{eqn:blowup_diff_1} we can take a limit as $\mu \to \infty$ to deduce, using \eqref{eqn:blowup_diff_1}, that
 \[
 \int_{\Omega_0} w  \Delta \phi = \int_{\del\Omega_0}w D_\nu \phi  + H \phi w
 \]

 Since $w$ is $C^2$ up to $\overline{\Omega}_0 \cap U$, and $\Delta w = 0$ in $\Omega_0$, we can integrate by parts to get
 \[
 \int_{\Omega_0} D \phi \cdot D w = - \int_{\del\Omega_0}  H \phi w, \quad \text{ or } \quad \int_{\del\Omega_0} \phi( D_\nu w + H w) = 0.
 \]
 Since $\phi$ is arbitrary we deduce that $w$ satisfies $D_\nu w + H w = 0$ on $\reg(u_0) \cap U$.
 
 \smallskip
 
 Since $U \subset\subset B_1 \setminus \sing (u_0)$ was arbitrary, by a diagonalization argument we deduce that there is a non-negative $w \in C^{2,\alpha}(\overline{\Omega_0} \setminus \sing(u_0)\cap B_1) \cap C^\infty(\Omega_0 \cap B_1)$ solving \eqref{eqn:lin_1} so that $w^\mu \to w$ in $C^\infty_{loc}(\Omega_0 \cap B_1)$. \qed

\subsection{Proof of Proposition \ref{thm:harnack}}  If we let $w_k = \min\{ w, k\}$ for $k \geq 0$, then by \eqref{eqn:lin_1} and \cref{lem:H} we get that
 \[
 \int_{\Omega_0} D \phi \cdot D w_k \geq 0 \quad \forall \phi \in C^1_c(B_1 \setminus \sing(u_0)) \text{ non-negative}.
 \]
 By \cref{lem:approx} we can replace $\phi$ with $(w_k+1)^{-1} \zeta^2$ for any fixed $\zeta \in C^1_c(B_1 \setminus \sing(u_0))$ non-negative, to get
 \[
 \int_{\Omega_0} (w_k+1)^{-2} |Dw_k|^2 \zeta^2 \leq 4 \int_{\Omega_0} |D\zeta|^2 .
 \]
 Arguing as in the proof of \cref{lem:approx}, we can find a sequence $\zeta_i \in C^1_c(B_1\setminus \sing(u_0))$ so that $\displaystyle\int_{\Omega_0} |D\zeta_i|^2 \to 1$ and $\zeta_i \to 1$ a.e. on $B_{\sfrac12}$.  Therefore we get the bound
 \[
 \int_{\Omega_0 \cap B_{\sfrac12}} |Dw_k|^2 \leq 4 (k+1)^2,
 \]
 and hence $w_k \in W^{1,2}(\Omega_0 \cap B_{\sfrac12})$.
 
 By \cref{thm:super} we deduce there are dimensional constants $C,\gamma>0$ so that
 \[
 \int_{\Omega_0 \cap B_\gamma} w_k \leq C \inf_{\Omega_0 \cap B_\gamma} w_k,
 \]
 and hence, taking $k \to \infty$, we get \eqref{eqn:lin_2}. \qed

 \section{Proof of Theorem \ref{thm:main}}\label{ss:6}

Here we put together the various ingredients of the previous sections to prove \cref{thm:main}.  The argument follows \cite{Si_max}.  As outlined in the introduction, we first show that violating the strict maximum principle of \cref{thm:main} effectively implies there is a point where the blow-ups of $u, v$ agree.  By a suitable blow-up argument, we can obtain a positive Jacobi field $w$ that decays like $O(r)$, which will contradict the Harnack theory which says $w$ must be uniformly bounded below.

\begin{lemma}\label{lem:theta}
	There is a positive dimensional constant $\theta_0$ such that if $u \in W^{1,2}_{loc}(\R^d)$ is a non-zero $1$-homogenous global minimizer of $J_{\R^d}$, then
	\[
	\{ x \in \Omega_u : d(x, \del \Omega_u) > \theta_0 |x| \} \neq \emptyset.
	\]
\end{lemma}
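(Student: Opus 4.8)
The plan is to extract the statement directly from the interior density estimate \cref{lem:ahlfors}, using the scale-invariance of the condition $d(x,\del\Omega_u)>\theta_0|x|$.

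First I would record the structure of $\Omega_u$. Since $u$ is non-zero, $\Omega_u=\{u>0\}$ is non-empty, and since $u$ is $1$-homogeneous, $\Omega_u$ is invariant under positive dilations, i.e.\ it is an open cone with vertex at the origin; in particular $\overline{\Omega_u}$ is a closed cone, so $0\in\overline{\Omega_u}$. Moreover, choosing any $z\in\Omega_u$ with $|z|=1$ (possible since $\Omega_u$ is a non-empty cone), the connected set $\{tz:0<t\le1\}$ lies in $\Omega_u\cap B_2$, hence sits inside a single connected component $\Omega'$ of $\Omega_u\cap B_2$, and $0\in\overline{\Omega'}$.

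Next I would apply \cref{lem:ahlfors} to $u$ — which minimizes $J_{B_2}$, being a global minimizer, and satisfies $0\in\overline{\Omega_u}$ — obtaining a point $y$ and a ball $B_{\beta^{-1}}(y)\subset\Omega'\cap B_1\subset\Omega_u$, with $\beta=\beta(d)\ge4$ a dimensional constant. From $B_{\beta^{-1}}(y)\subset\Omega_u$ (which is open) we get $d(y,\del\Omega_u)\ge\beta^{-1}$, and from $B_{\beta^{-1}}(y)\subset B_1$ we get $|y|\le1-\beta^{-1}<1$. Setting $\theta_0:=\beta^{-1}$, a dimensional constant, we conclude
\[
d(y,\del\Omega_u)\;\ge\;\beta^{-1}\;>\;\beta^{-1}\bigl(1-\beta^{-1}\bigr)\;\ge\;\beta^{-1}|y|\;=\;\theta_0|y|,
\]
so $y\in\{x\in\Omega_u:d(x,\del\Omega_u)>\theta_0|x|\}$, which is therefore non-empty.

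I do not expect any real obstacle here: the quantitative content lies entirely in \cref{lem:ahlfors}, which is already established, and the rest is elementary. The only points needing (routine) care are the topological facts about the cone $\Omega_u$ — especially the existence of a component $\Omega'$ of $\Omega_u\cap B_2$ whose closure contains $0$ — and the bookkeeping of the ball inclusions that yields $|y|<1$, and hence the strict inequality.
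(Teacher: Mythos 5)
Your proof is correct, and it takes a genuinely different route from the paper's. The paper argues by contradiction and compactness: it supposes there were $1$-homogeneous minimizers $u_i$ with $d(x,\del\Omega_{u_i})\le (1/i)|x|$ on $\Omega_{u_i}$, passes to a subsequential limit $u_0$ using \cref{lem:comp}, finds a ball $B_\eps(p)\subset\Omega_{u_0}$ with $|p|=1$, and then derives a contradiction from Hausdorff convergence of free boundaries, which makes $\theta_0$ non-constructive. Your approach instead reads the conclusion directly off the clean-ball statement at the end of \cref{lem:ahlfors}: since $u$ is a $1$-homogeneous non-zero global minimizer, $\Omega_u$ is a non-empty cone, $0\in\overline{\Omega_u}$, and there is a component $\Omega'$ of $\Omega_u\cap B_2$ with $0\in\overline{\Omega'}$; then $B_{\beta^{-1}}(y)\subset\Omega'\cap B_1\subset\Omega_u$ gives $d(y,\del\Omega_u)\ge\beta^{-1}$ and $|y|\le 1-\beta^{-1}$, so $\theta_0=\beta^{-1}$ works. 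The chain $d(y,\del\Omega_u)\ge\beta^{-1}>\beta^{-1}(1-\beta^{-1})\ge\beta^{-1}|y|$ is sound, and the explicit constant is a minor bonus. Both proofs rest on the same underlying density input (\cref{lem:ahlfors}); the paper just routes it through the compactness machinery of \cref{lem:comp}, while you use the stronger pointwise ball conclusion of \cref{lem:ahlfors} directly, making the argument shorter and quantitative. No gaps.
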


\begin{proof}
	If the Lemma failed, we could find a sequence $u_i$ of $1$-homogenous minimizers such that
	\begin{equation}\label{eqn:theta-1}
	d(x, \del\Omega_{u_i} ) \leq (1/i) |x| \quad \forall x \in \Omega_{u_i}.
	\end{equation}
	Passing to a subsequence, we can assume there is a $1$-homogenous minimizer $u_0$ so that $u_i \to u_0$ in $C^\alpha_{loc}$, by \cref{lem:comp}.  Since $0 \in \del \Omega_{u_0}$, $\Omega_{u_0}$ is a non-empty open set containing some ball $B_\eps(p)$ with $|p| = 1$.  But then we must have $\Omega_{u_i} \supset B_{\eps/2}(p)$ for $i >> 1$, contradicting \eqref{eqn:theta-1} for $i > 2/\eps$.
\end{proof}

\begin{proof}[Proof of \cref{thm:main}]
	Assume that $0 \neq u \leq v$ and $\reg(u) \cap \reg(v) = \emptyset$, but $\del \Omega_u\cap \del\Omega_v \cap U \neq \emptyset$.  We aim to obtain a contradiction.  Note that, since $\reg(u)$ is dense in $\del\Omega_u \cap U$, the interior maximum principle implies $u < v$ on $\Omega_u \subset \Omega_v$.
	
	\medskip
	
	\noindent\emph{Step 1. Dimension reduction.} We claim that we can assume that $U = B_1$ and $0 \in \del \Omega_u \cap \del \Omega_v$ and $u, v$ have the same tangent cone at $0$, in the sense that for any $r_i \to 0$, there is a subsequence $r_i'$ and a $1$-homogenous minimizer $u_0$ so that $u_{0, r_i'} \to u_0$ and $v_{0, r_i'} \to u_0$.
	
	\smallskip
	
	Pick $p \in \del \Omega_u \cap \del \Omega_v \cap U$.  If $u, v$ have the same tangent cone at $x$, in the sense described above, then we can replace $u, v$ with $u_{p, 1-|p|}$, $v_{p, 1-|p|}$ and the claim is proved.  Otherwise, there are a sequence $r_i \to 0$, and $1$-homogenous minimizers $u_0 \leq v_0$, $u_0 \neq v_0$ so that $u_{p, r_i} \to u_0$ and $v_{p, r_i} \to v_0$.
	
	If (after a rotation) $u_0 = x_d^+$ or $v_0 = x_d^+$, then necessarily by domain monotonicity of eigenvalues in the sphere $u_0 = v_0 = x_d^+$.  In this case $p \in \reg(u)\cap \reg (v)$, which contradicts our hypothesis that $\reg(u) \cap \reg(v) = \emptyset$.
	
	So neither $u_0, v_0$ is linear.  Since $u_0 \leq v_0$ are $1$-homogenous, after replacing $u_0$ with $u_0 \circ Q$ for some rotation $Q \in \mathrm{SO}(d)$, we can assume there is an $p_0 \in \del \Omega_{ u_0} \cap \del\Omega_{ v_0} \cap \del B_1$.  If $p_0 \in \reg(u_0) \cap \reg(v_0)$, then since $\Omega_{u_0}$, $\Omega_{v_0}$ are connected (\cref{lem:conn}) the Hopf maximum principle implies $u_0 = v_0$, which is a contradiction.  Therefore, by the same argument as in the previous paragraph, we must have $p_0 \in \sing(u_0) \cap \sing(v_0)$.
	
	If $u_0, v_0$ have the same tangent cone at $p_0$, then as before replace $u, v$ with $(u_0)_{p_0, 1/2}$, $(v_0)_{p_0, 1/2}$ to establish our claim.  Otherwise, we can repeat the previous argument, blowing up $u_0, v_0$ at $p_0$, to obtain new $1$-homogenous minimizers $u_1, v_1$, and a $p_1 \in \del \Omega_{u_1} \cap \del \Omega_{ v_1} \cap\del B_1$ satisfying the same hypotheses as $u_0, v_0, p_0$, but with a $1$-dimensional line of translational symmetry.
	
	By a standard dimension reduction argument, as long as the current $1$-homogenous minimizers $u_k \leq v_k$, $u_k \neq v_k$ do not have the same tangent cone at $p_k$, we can blow-up again to obtain new $1$-homogenous minimizers with an extra dimension of translational symmetry.  Since every $u_k, v_k$ must be non-linear, this process must stop before $u_k, v_k$ have $(d-1)$-dimensions of translational symmetry.  This proves our claim.\medskip
	
	\noindent\emph{Step 2. Construction and decay of the linearized solution $w$.}  Fix $\theta = \theta_0/2$, for $\theta_0$ as in \cref{lem:theta} and set
	\[
	\Omega_\theta := \big\{ x \in \Omega_u : d(x, \del \Omega_u ) > \theta |x| \big\},
	\]
	so by construction $\Omega_\theta \subset \Omega_u \subset \Omega_v$.  
	
Since $u$ and $v$ have the same tangent cone at $0$, we get that 
	\begin{equation}\label{eqn:m-0.5}
	\sup_{\Omega_\theta \cap \del B_r} r^{-1} (v - u) \equiv \sup_{(r^{-1} \Omega_\theta) \cap \del B_1} (v_{0, r} - u_{0, r}) \to 0 \quad \text{ as } r \to 0.
	\end{equation}
As a consequence, for every $R>0$, the supremum  
$$\sup_{r\in(0,R]}\left( \sup_{\Omega_\theta \cap \del B_r} r^{-1}(v - u) \right),$$
is a maximum achieved at some radius $r\in(0,R]$. This implies that we can select a sequence $r_i \to 0$ so that
	\begin{equation}\label{eqn:m-5}
	\sup_{r \leq r_i} \left( \sup_{\Omega_\theta \cap \del B_r} r^{-1}(v - u) \right) \leq 2 \sup_{\Omega_\theta \cap \del B_{r_i}} r_i^{-1}(v - u).
	\end{equation}
	Passing to a subsequence, by \cref{lem:comp} we can assume there is a $1$-homogenous minimizer $u_0$ so that $u_{0, r_i} \to u_0$ and $v_{0, r_i} \to u_0$ in $C^\alpha_{loc}$, and the free-boundaries converge in the local Hausdorff distance.  Fix a point $p \in \Omega_{u_0} \cap \del B_1$, and define $$\lambda_i := v_{0, r_i}(p) - u_{0, r_i}(p) > 0.$$
	Write $\Omega_0:= \Omega_{u_0}$.  By \cref{thm:osb}, applied in $B_2$ rather than $B_1$, we can find a non-negative function $w \in C^2(\overline{\Omega_0} \setminus \sing(u_0) \cap B_2) \cap C^\infty(\Omega_0 \cap B_2)$ satisfying \eqref{eqn:lin_1} and so that the rescaled functions
	\begin{equation}\label{eqn:m-2}
	\lambda_i^{-1}(v_{0, r_i} - u_{0, r_i}) \to w\quad\text{in}\quad C^\infty_{loc}(\Omega_0 \cap B_2).
	\end{equation}
By our normalization, $w(p) = 1$, and so, since $\Omega_0$ is connected (by \cref{lem:conn}), 
$$w > 0\quad\text{on}\quad\Omega_0 \cap B_2.$$ 
	For a number $\theta'>0$, we will use the notation 
	$$\Omega_{0, \theta'} := \big\{ x \in \Omega_0 : d(x, \del\Omega_0) > \theta' |x| \big\}.$$ 
By the convergence of the blow-up sequence $u_{0, r_i}$ to $u_0$, we have that
\begin{equation}\label{eqn:m-3}
(r_i^{-1} \Omega_\theta) \cap \del B_1 \subset \Omega_{0, \theta/2} \cap \del B_1\,,
\end{equation}
 for $i$ large enough. Analogously, for any $\rho > 0$ and $i$ sufficiently large
\begin{equation}\label{eqn:m-4}
(r_i^{-1} \Omega_\theta) \cap B_{2-\rho} \setminus B_\rho \supset \Omega_{0, 2\theta} \cap B_{2-\rho} \setminus B_\rho.
\end{equation}
Now, our choice of $r_i$ in \eqref{eqn:m-5}, combined with \eqref{eqn:m-2}, \eqref{eqn:m-3}, \eqref{eqn:m-4}, implies that
\begin{equation}\label{eqn:m-6}
\sup_{\Omega_{0, 2\theta} \cap \del B_r} r^{-1} w \leq 4 \sup_{\Omega_{0, \theta/2} \cap \del B_1} w \qquad\text{for all}\qquad r \leq 1.	
\end{equation}
Since $\Omega_{0, 2\theta} \neq \emptyset$ (and is obviously dilation-invariant), we get that 
\begin{equation}\label{eqn:m-6-1}
\inf_{\Omega_0\cap\del B_r}w \leq Cr\qquad\text{for all}\qquad r \leq 1,	
\end{equation}
for some constant $C>0$.\medskip

\noindent\emph{Step 3. Harnack inequality and conclusion of the proof.} By \cref{thm:harnack} we have 
	\begin{equation}\label{eqn:m-7}
\inf_{B_\gamma \cap \Omega_0} w \geq \frac{1}{C} \int_{B_\gamma \cap \Omega_0} w > 0
\end{equation}
	with $C, \gamma$ positive dimensional constants, which clearly contradicts \eqref{eqn:m-6-1}.
\end{proof}

\begin{proof}[Proof of \cref{cor:main}]
A direct consequence of \cref{thm:main}, the Hopf maximum principle, and the connectivity of $\Omega_v$.
\end{proof}

\begin{proof}[Proof of \cref{cor:main2}]
First observe that if $\Omega'$ is any connected component of $\Omega_v$, then $v|_{\del U \cap \overline{\Omega'}}$ cannot be identically zero.  For otherwise, we would have $v|_{\Omega'} \in W^{1,2}_0(U)$, and hence by replacing $v$ with $v ' = v \cdot 1_{U \setminus \Omega'}$ we would have $v' - v \in W^{1,2}_0(U)$ and $J_U(v') < J_U(v)$, contradicting minimality of $v$.

Now by \cref{thm:main}, if the conclusion of \cref{cor:main2} failed we would necessarily have $u = v$ on some connected component $\Omega'$ of $\Omega_v$.  But then on some subset $\Gamma \subset \del U \cap \overline{\Omega'}$ of positive $\haus^{d-1}$-measure we would have $0 < u = v$, contradicting our hypothesis.
\end{proof}

\section{Proof of Theorem \ref{thm:fol}} \label{ss:fol}

Our proof follows the same blow-up principle as \cite{HaSi, Wa, DeJeSh}, which is to find a sequence of minimizers $v^\mu$ of $J_{B_1}$ lying to one side of $u_0$, argue that $v^\mu \to u_0$ but $d(0, \Omega_{v^\mu}) > 0$, and then take a limit of a suitable sequence of dilates $v^\mu_{0, r_\mu} \to \uu$.  The key simplification observed by \cite{Wa} is to prove the ``radial graph'' property before blowing-up rather than after, and thereby avoid having to understand the precise asymptotics of the limit $\uu$ (at the ``expense'' of having to know $C^0$ regularity of $v^\mu$ up to $\del B_1$).

\begin{proof}[Proof of \cref{thm:fol}]
Fix $\gamma < 1$, and let $v^\gamma$ minimize $J_{B_1}$ subject to $v^\gamma|_{\del B_1} = \gamma u_0|_{\del B_1}$ (of course $J_{B_1}(\gamma u_0) < \infty$ since $u_0 \in W^{1,2}_{loc}$).  Since $v^\gamma|_{\del B_1} \leq u_0|_{\del B_1}$ and $u_0$ is minimizing, after replacing $v^\gamma$ with $\min\{ v^\gamma, u_0\}$ there is no loss in assuming $v^\gamma \leq u_0$.  By \cref{l:continuity}, $v^\gamma \in C^0(\overline{B_1})$.

We firstly claim that $v^\gamma \leq \gamma u_0$ also.  To see this, observe that $\Omega_{\gamma u_0} = \Omega_{u_0}$ and $v^\gamma \leq u_0$, and hence if $U' = \{ v^\gamma > \gamma u_0\}$ then $U' \subset \Omega_{\gamma u_0}$ and $(v^\gamma - \gamma u_0)^+ \in W^{1,2}_0(U')$ and $\Delta (v^\gamma - \gamma u_0) = 0$ in $U'$.  Therefore the weak maximum principle for harmonic functions implies $(v^\gamma - \gamma u_0)^+ = 0$, proving our claim.

Now $D_\nu(\gamma u_0) = -\gamma \neq -1$ on $\reg(\gamma u_0) \equiv \reg(u_0)$, and so $\reg(v^\gamma) \cap \reg(u_0) = \emptyset$.  By \cref{thm:main} (applied to $v^\gamma$ and $u_0$) we must have $\del\Omega_{v^\gamma} \cap \del \Omega_{u_0} \cap B_1 = \emptyset$.  Together with the interior maximum principle we deduce that $v^\gamma < \gamma u_0$ on $\overline{\Omega_{v^\gamma}} \cap B_1$.  In particular, since $\gamma u_0$ is $1$-homogenous, we have
\begin{equation}\label{eqn:fol-1}
v^\gamma_{0, r} < \gamma u_0 \equiv v^\gamma \quad \text{ on } r^{-1} \overline{\Omega_{v^\gamma}} \cap \del B_1, \quad \forall r < 1.
\end{equation}

We secondly claim that $v^\gamma_{0,r} \leq v^\gamma$ in $B_1$ for all $r < 1$.  Since $d(0, \Omega_{v^\gamma}) > 0$, this is trivially true for all $r$ sufficiently small.  If $r_*$ is the largest radius so that $v^\gamma_{0,r} \leq v^\gamma$ on $B_1$ for all $r < r_*$, then necessarily since $v^\gamma \in C^0(\overline{B_1})$ we must have $v^\gamma_{0, r_*} \leq v^\gamma$ on $B_1$, and there must be an $x \in \overline{B_1} \cap r_*^{-1} \overline{\Omega_{v^\gamma}}$ for which $v^\gamma_{0, r_*}(x) = v^\gamma(x)$.  By \eqref{eqn:fol-1} and  \cref{cor:main2}, this is a contradiction unless $r_* = 1$.

For a fixed $x \in B_1$, our second claim implies $r^{-1} v^\gamma(rx) \leq v^\gamma(x)$ for all $r \leq 1$.  Therefore at any point $x$ where $Dv^\gamma$ exists we must have
\[
0 \leq \frac{d}{dr}\Big|_{r = 1} r^{-1} v^\gamma(rx) = -v^\gamma(x) + x \cdot Dv^\gamma(x).
\]

We thirdly claim that $v^\gamma \to u_0$ in $W^{1,2}(B_1)$ as $\gamma \to 1$.  Otherwise, by standard compactness there would be $\gamma_i \to 1$ so that $v^{\gamma_i} \to v$ for some minimizer $v \in W^{1,2}(B_1)$ satisfying $v|_{\del B_1} = u_0|_{\del B_1}$ and $v \leq u_0$ but $v \neq u_0$.  But since $u_0$ is the unique minimizer of $J_{B_1}$ for its boundary data (see e.g. \cite[Lemma 2.5]{DeJeSh}), this is a contradiction, and proves our third claim.

For each $\gamma < 1$ we have $r_\gamma := d(0, \Omega_{v^\gamma}) > 0$, and from our third claim we have $r_\gamma \to 0$.  We can therefore find a sequence $\gamma_i \to 1$ so that the functions $v^{\gamma_i}_{0, r_{\gamma_i}}$ converge in $(W^{1,2}_{loc} \cap C^\alpha_{loc})(\R^d)$ to some global minimizer $\underline{u}$ satisfying $\uu \leq u$, $d(0, \Omega_{\uu}) = 1$, and
\begin{equation}\label{eqn:fol-2}
-\uu(x) + x \cdot D\uu(x) \geq 0 \quad \text{$\leb^d$-a.e.} x \in \R^d.
\end{equation}
This $\uu$ is our required solution, satisfying \cref{thm:fol}:\ref{item:fol-1},\ref{item:fol-2}.  We now show $\uu$ satisfies the other asserted properties.

We prove $\sing(\uu) = \emptyset$ (i.e. \cref{thm:fol}:\ref{item:fol-4}).  To see this, observe that if $x \in \del \Omega_{\uu}$, then for $r$ sufficiently small \eqref{eqn:fol-2} implies
\[
-r \uu_{x, r}(y) + (x + ry) \cdot D \uu_{x, r}(y) \geq 0 \quad \text{$\leb^d$-a.e. } y \in B_1.
\]
Now if $w$ is any tangent solution to $\uu$ at $x$, then $w$ is a $1$-homogeneous global minimizer of $J_{\R^d}$ satisfying
\begin{equation}\label{eqn:fol-3}
x \cdot D w(y) \geq 0 \quad \text{$\leb^d$-a.e. } y \in \R^d.
\end{equation}
\eqref{eqn:fol-3} implies that $\Omega_{w} \subset \{ y : y \cdot x \geq 0 \}$, and hence we must have $\Omega_w = \{ y : y \cdot x \geq 0 \}$ and $w(y) = (y \cdot x)^+$.  This proves $x \in \reg(\uu)$.

We prove \eqref{eqn:fol-2} holds with $> 0$ in place of $\geq 0$ (i.e. \cref{thm:fol}:\ref{item:fol-3}).  This follows because $w(x) := -\uu(x) + x \cdot D\uu(x)$ is a non-negative Jacobi field on $\Omega_u$, i.e. $w$ satisfies
\[
\Delta w = 0 \text{ in } \Omega_{\uu}, \quad D_\nu w + H w = 0 \text{ on } \del \Omega_{\uu}, \quad w \geq 0,
\]
where $H$ is the mean curvature scalar of $\del\Omega_{\uu}$ w.r.t. the outer unit normal $\nu$.  Non-negativity is obvious, and harmonicity is an easy computation.  The boundary condition follows because along $\del \Omega_{\uu}$ we have
\[
D\uu = -\nu, \quad D^2_{\nu,\nu} \uu = H, \quad D^2_{\nu, e} \uu = 0 \text{ if $e \perp \nu$}.
\]
Now the Harnack inequality of \cref{thm:harnack} implies that either $w \equiv 0$, or $w > 0$ on $\overline{\Omega_{\uu}}$.  But $w$ cannot be identically zero as this would contradict (e.g.) the fact that $d(0, \Omega_{\uu}) = 1$.

We next prove that $\uu_{0, r} \to u_0$ as $r \to \infty$ (i.e. \cref{thm:fol}:\ref{item:fol-5}).  Take any sequence $r_i \to \infty$. Passing to a subsequence we can assume $\uu_{0, r_i} \to u_0'$ for some $1$-homogenous minimizer $u_0' \leq u_0$.  But now by eigenvalue monotonicity for domains in the sphere, we must have $u_0 = u_0'$.  Since the sequence $r_i$ is arbitrary, this proves our assertion.

Lastly, the fact that the dilations of $\del\Omega_{\uu}$ foliate $\Omega_{u_0}$ by smooth, analytic hypersurfaces, which are radial graphs, follows directly from the properties  \ref{item:fol-1}--\ref{item:fol-5}.

The construction of $\ou$ is essentially the same.  Here we take $\gamma > 1$, and define $v^\gamma$ as before.  The same arguments imply that $v^\gamma \geq \gamma u_0$ on $B_1$, and $v^\gamma_{0, r} \geq v^\gamma$ on $B_1$ for every $r < 1$, and hence
\[
-v^\gamma(x) + x \cdot Dv^\gamma(x) \leq 0 \quad \text{$\leb^d$-a.e. } x \in B_1.
\]
Taking an appropriate sequence $\gamma_i \to 1$ and $r_{\gamma_i} = d(0, \Omega_{v^{\gamma_i}}) \to 0$, we can take a limit of $v^{\gamma_i}_{0, r_{\gamma_i}}$ to obtain a global minimizer $\ou \geq u$.  The rest of the argument proceeds as in the case of $\uu$, except using the Jacobi field $-w$ in place of $w$.
\end{proof}

\appendix

\section{John-Nirenberg lemma}\label{ss:john}

We provide here a self-contained proof in our setting of the John-Nirenberg-type lemma used in proving \cref{thm:super}.  The proof is a very (very) minor modification of a proof due to L. Simon.  We reproduce it here for the convenience of the reader.

\begin{lemma}\label{lem:john}
Under the same hypotheses as in \cref{thm:super}, there is a dimensional constant $\gamma(d) > 0$ so that
\begin{equation}\label{eqn:john-concl}
\bigg( \int_{\Omega \cap B_\gamma} f^p \bigg) \bigg( \int_{\Omega \cap B_\gamma} f^{-p} \bigg) \leq c(d, p) \qquad\text{for all}\qquad 0<p<\frac{d}{d-2}.
\end{equation}
\end{lemma}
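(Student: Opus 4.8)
The plan is to run the classical Moser--Stampacchia derivation of the weak Harnack inequality, with Euclidean cubes replaced by balls intersected with $\Omega:=\Omega_u$ and the Sobolev embedding replaced by \cref{thm:sobolev}. First I would reduce to $\inf_{\Omega\cap B_1}f>0$: constants being harmonic, $f+\eps$ is again a non-negative supersolution in the sense of \eqref{eqn:super-hyp} for every $\eps>0$, so it suffices to bound the left-hand side for $f+\eps$ uniformly in $\eps$ and let $\eps\downarrow 0$ (monotone/dominated convergence in the two factors). With $f$ now bounded below, the test function $\phi=f^{-1}\zeta^2$ is admissible in \eqref{eqn:super-hyp} for every non-negative $\zeta\in C^1_c(B_1\setminus\sing(u))$ --- it lies in $W^{1,2}(\Omega\cap B_1)$ with support away from $\sing(u)$, hence is approximable by functions in $C^1_c(B_1\setminus\sing(u))$ as in \cref{lem:approx} --- and testing with it yields the logarithmic Caccioppoli bound
\[
\int_{\Omega}|D\log f|^2\,\zeta^2\;\leq\;4\int_{\Omega}|D\zeta|^2 .
\]
Since $u_{y,\rho}$ is again a minimizer of $J$ on the appropriate ball, the isoperimetric and Sobolev inequalities of \cref{ss:2} and \cref{ss:3} hold at every scale with dimensional constants; taking $\zeta$ a standard cutoff between $B_{\rho/2}(y)$ and $B_{\rho}(y)$ gives $\int_{\Omega\cap B_{\rho/2}(y)}|D\log f|^2\leq C(d)\rho^{d-2}$ for all admissible balls.

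Next I would show that $w:=\log f$ has bounded mean oscillation over $\Omega$, uniformly across scales. Combining the relative Poincar\'e inequality \eqref{eqn:sobolev-concl1} (rescaled to $B_{\rho}(y)$, followed by Cauchy--Schwarz to pass from $\|Dw\|_{L^1}$ to $\|Dw\|_{L^2}$) with the volume lower bound of \cref{lem:ahlfors} to normalize, one obtains dimensional constants $\gamma_0\in(0,1]$, $C_0>0$ so that, for every admissible ball $B_{\rho}(y)$ centered near $\overline\Omega$,
\[
\mean{\Omega\cap B_{\gamma_0\rho}(y)}\bigl|w-(w)_{y,\rho}\bigr|\,d\haus^d\;\leq\;C_0,\qquad (w)_{y,\rho}:=\mean{\Omega\cap B_{\gamma_0\rho}(y)}w\,d\haus^d .
\]
This bound depends only on the gradient estimate above, so it is symmetric under $w\mapsto -w$, i.e.\ under $f\mapsto f^{-1}$.

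The heart of the proof is then L.\ Simon's John--Nirenberg iteration, carried out on $\Omega$ rather than on a Euclidean ball. Fixing a dimensional radius $\gamma$, writing $m_0:=\mean{\Omega\cap B_\gamma}w\,d\haus^d$ and $g(t):=\haus^d\bigl(\{w-m_0>t\}\cap\Omega\cap B_\gamma\bigr)$, one covers the superlevel set $\{w-m_0>t\}\cap\Omega\cap B_\gamma$ by a Besicovitch subfamily of balls on each of which the $\Omega$-average of $w-m_0$ equals $t$ (a stopping-time/rising-sun selection), applies the oscillation bound of the previous step on each such ball together with Chebyshev's inequality, and sums using the bounded overlap of the family, to obtain $g(t+C_1)\leq\tfrac12 g(t)$ for a dimensional $C_1$ and all $t\geq 0$. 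Iterating gives $g(nC_1)\leq 2^{-n}g(0)$, hence $\int_{\Omega\cap B_\gamma}e^{\delta_0(w-m_0)^+}\,d\haus^d\leq C(d)$ with $\delta_0:=(\ln 2)/C_1$; running the same argument for $(m_0-w)^+$ gives $\int_{\Omega\cap B_\gamma}e^{\delta_0|w-m_0|}\,d\haus^d\leq C(d)$. For $0<p\leq\delta_0$ the lemma now follows by writing $\int f^p\cdot\int f^{-p}=\int e^{p(w-m_0)}\cdot\int e^{-p(w-m_0)}$, applying Jensen's inequality to $(e^{\delta_0|w-m_0|})^{p/\delta_0}$, and bounding $\haus^d(\Omega\cap B_\gamma)\leq\omega_d\gamma^d$. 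The remaining range $\delta_0<p<\tfrac{d}{d-2}$ follows from the $L^{\delta_0}$ bounds just established for $f$ and $f^{-1}$ by standard Moser iteration --- reverse H\"older for the supersolution $f$ (via \cref{cor:sobolev} and \eqref{eqn:super-hyp}) and the $L^\infty$ bound of \cref{thm:sub} for the subsolution $f^{-1}$ --- after shrinking $\gamma$; these are consequences of the Sobolev inequalities of \cref{ss:3} alone, not of \cref{lem:john}.

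I expect the John--Nirenberg step to be the main obstacle: the classical argument is tied to the Euclidean/dyadic structure, whereas here the ``ambient space'' for the covering and averaging is the irregular, possibly disconnected domain $\Omega_u$, whose boundary may be singular. The relative isoperimetric inequalities of \cref{ss:2} and \cref{ss:3} are precisely what make the Poincar\'e/oscillation bound available on $\Omega_u\cap B_\rho$, while the density bounds of \cref{lem:ahlfors} ensure that $\Omega_u$ occupies a definite fraction of each relevant ball, so that the Besicovitch/stopping-time selection and the decay estimate $g(t+C_1)\leq\tfrac12 g(t)$ survive; keeping all constants scale-invariant by rescaling minimizers is what lets the oscillation bound hold uniformly down to arbitrarily small scales, as the iteration requires.
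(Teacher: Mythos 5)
Your proposal takes a genuinely different route from the paper at the central step. Both proofs open the same way (test with $(f+\eps)^{-1}\zeta^2$ to get the logarithmic Caccioppoli bound $\int_\Omega |D\log(f+\eps)|^2\zeta^2 \le 4\int_\Omega|D\zeta|^2$, then use \eqref{eqn:sobolev-concl1} with the Ahlfors-regularity lower bound to normalize), but diverge thereafter. You propose the ``geometric'' John--Nirenberg argument: establish the BMO estimate for $w=\log f$ uniformly across scales, then run a Besicovitch stopping-time decomposition on $\Omega_u\cap B_\gamma$ to produce the decay $g(t+C_1)\le\tfrac12 g(t)$ and hence $\int e^{\delta_0|w-m_0|}\le C(d)$. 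The paper instead takes the ``analytic'' route (following L.~Simon): iterate the Caccioppoli inequality against $|w_k|^{p-1}\phi^{\alpha p-\beta}$ at higher and higher powers, use the relative Sobolev inequality \eqref{eqn:sobolev-concl2} to climb up the $L^{2\chi^\nu}$ scale, and keep track of the growth of the constants via the interpolation $p^2|w_k|^{2p-4}\le\tfrac1{16}|w_k|^{2p-2}+16^p p^{2p}$. This produces $\|w\|_{L^j(\Omega\cap B_{\gamma/2})}\le c(d,\chi)\,j$ for all integers $j$, and the exponential integrability then follows by summing the Taylor series with Stirling's approximation. The Moser route never needs a covering decomposition, so it is insensitive to the irregular geometry of $\Omega_u$ and uses only the Sobolev machinery of \cref{ss:3}; that is precisely why the paper adopts it. Your Besicovitch route is viable in principle on a doubling metric-measure space (and \cref{lem:ahlfors} gives the doubling), but the details of the stopping-time selection and the overlap bookkeeping are genuinely nontrivial on an irregular, possibly disconnected domain whose boundary has a singular set --- you correctly flag this as the main obstacle, and it is the part your sketch leaves open. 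Note in particular that the classical Calder\'on--Zygmund iteration relies on the nesting of dyadic cubes (children partition parents, so stopping cubes at successive levels are genuinely nested); with Besicovitch balls you only get bounded overlap rather than nesting, and you would need to replace that argument with a Buckley-type version adapted to balls. The extension to the full range $0<p<\chi$ is handled in the paper by testing with $(f+\eps)^{\theta-1}\zeta^2$ for $\theta<1$, Sobolev, and a careful H\"older interpolation in the parameter $\delta$; your proposal (reverse H\"older for the supersolution $f^\theta$, plus the $L^\infty$ bound from \cref{thm:sub} for the subsolution $f^{-1}$) is essentially the same mechanism stated more loosely --- it works, but the constants blow up as $\theta\to1$, which is acceptable since $p<\chi$ is fixed.
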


\begin{proof}
Let $\Omega:=\Omega_u$ and let $\eps > 0$ be fixed.  For $\zeta \in C^1_c(B_1 \setminus \sing(u))$ non-negative, note that $\phi = (f+\eps)^{-1} \zeta^2 \in W^{1,2}(\Omega \cap B_1)$, and is supported in $B_1$.  Therefore we can approximate $\phi$ in $W^{1,2}(B_1)$ by admissible test functions, and from \eqref{eqn:super-hyp} get
\[
\int_{\Omega} -(f+\eps)^{-2} |Df|^2 \zeta^2 + (f+\eps)^{-1} 2 \zeta Df \cdot \zeta \geq 0\,,
\]
and hence
\begin{equation}\label{eqn:john-1}
\int_{\Omega} (f+\eps)^{-2} |Df|^2 \zeta^2 \leq 4 \int_\Omega |D\zeta|^2.
\end{equation}
For $\lambda \in \R$ to be determined later, define $w := \log(f+\eps) - \lambda$.  Then, \eqref{eqn:john-1} gives
\begin{equation}\label{eqn:john-2}
\int_\Omega |Dw|^2 \zeta^2 \leq 4 \int_\Omega |D\zeta|^2 \quad \forall \zeta \in C^1_c(B_1 \setminus \sing(u)) \text{ non-negative}.
\end{equation}
By the same approximation argument as in \cref{lem:approx}, we deduce that $w \in W^{1,2}(\Omega \cap B_r)$ for all $r < 1$.  In particular, if $w_k = \min\{ k, \max \{ -k, w\}\}$, then $|w_k|^p \in W^{1,2}(\Omega \cap B_r)$ for any $p \geq 0$, $r < 1$.  Using \eqref{eqn:sobolev-concl1}, Holder's inequality, and \eqref{eqn:john-2}, we can choose (and fix) a $\lambda$ so that
\begin{align}
\int_{\Omega \cap B_{\gamma/2}} |w|^{n/(n-1)} \leq c(d) \int_{\Omega \cap B_{1/2}} |Dw| \leq c(d). \label{eqn:john-2.5}
\end{align}

\noindent Take $\phi \in C^1_c(B_{\gamma/2}, [0, 1])$, $p \geq 2$, $\displaystyle\beta = \frac1{\chi - 1}$ and $\alpha = 2\beta + 2$.  From \eqref{eqn:sobolev-concl2} we have
\begin{align}
\left( \int_\Omega |w_k|^{2p\chi} \phi^{2\alpha p \chi - \beta \chi} \right)^{1/\chi} 
&\leq c(d) p^2 \int_\Omega |w_k|^{2p-2} |Dw|^2 \phi^{2\alpha p - 2\beta}\notag\\
&\qquad  + c(d, \chi) p^2 \int_\Omega |w_k|^{2p} \phi^{2\alpha p - 2\beta - 2}. \label{eqn:john-3}
\end{align}
On the other hand, replace $\zeta$ with $|w_k|^{p-1} \phi^{\alpha p - \beta}$ in \eqref{eqn:john-2}, and obtain
\begin{align}
\int_\Omega |w_k|^{2p-2} |Dw_k|^2 \phi^{2\alpha - 2\beta} 
&\leq 8 p^2 \int_\Omega |w_k|^{2p-4} |Dw_k|^2 \phi^{2\alpha p - 2 \beta}\notag\\
&\qquad + c(\chi) p^2 \int_\Omega |w_k|^{2p-2} \phi^{2\alpha p - 2\beta - 2}. \label{eqn:john-4}
\end{align}
Using the interpolation $a^\mu b^{1-\mu} \leq \mu a + (1-\mu) b$ for $a, b \geq 0$, $\mu \in (0, 1)$, we have
\begin{equation}
p^2 |w_k|^{2p-4} \leq (1/16) |w_k|^{2p-2} + 16^p p^{2p}. \label{eqn:john-4.2}
\end{equation}
Therefore, combining \eqref{eqn:john-3}, \eqref{eqn:john-4}, \eqref{eqn:john-4.2}, \eqref{eqn:john-2.5}, we get
\begin{align}
\left( \int_\Omega |w_k|^{2p\chi} \phi^{2\alpha \chi - 2\beta\chi} \right)^{1/\chi} 
&\leq c(d)^p p^{2p} \int_{\Omega \cap B_{\gamma/2}} |Dw|^2 + c(d, \chi) p^2 \int_\Omega |w_k|^{2p-2}\phi^{2\alpha \chi - 2\beta - 2} \nonumber \\
&\leq c(d)^p p^{2p} + c(d, \chi) p^2 \int_\Omega |w_k|^{2p-2} \phi^{2\alpha p - 2\beta - 2}.
\end{align}
Recall that by our choice of $\beta$ we have $\beta \chi = \beta + 1$.  Recall also that $(a + b)^\mu \leq a^\mu + b^\mu$ for $a, b \geq 0$ and $\mu \in [0, 1]$.  Defining the measure $d\eta = \phi^{-2\beta \chi} dx \equiv \phi^{-2\beta - 2} dx$, we deduce
\begin{equation}\label{eqn:john-5}
\left( \int_\Omega |w_k|^{2p\chi} \phi^{2\alpha p \chi} d\eta \right)^{1/2p\chi} \leq c(d) p + c(d, \chi)^{1/p} p^{1/p} \left( \int_\Omega |w_k|^{2p} \phi^{2\alpha p} d\eta \right)^{1/2p}.
\end{equation}
For any $\delta \in (0, 1)$ and non-negative measurable $F$, we have by Holder's inequality
\begin{align}
\left( \int_\Omega F^{2p} d\eta \right)^{1/2p} 
&\leq \left( \int_\Omega F^{2p\chi} d\eta \right)^{\delta/2p \chi} \left( \int_\Omega F^{2p(1-\delta) \chi/(\chi-\delta)} d\eta \right)^{(\chi-\delta)/2\chi p} \label{eqn:john-6}
\end{align}
Since the map $\delta \mapsto 2p(1-\delta)\chi/(\chi-\delta)$ takes the value $2p \geq n/(n-1)$ when $\delta = 0$ and $0$ when $\delta = 1$, we can choose a $\delta=\delta(p, \chi)$ so that $$2p(1-\delta)\chi/(\chi-\delta) = n/(n-1).$$
Now combine \eqref{eqn:john-5}, \eqref{eqn:john-6}, \eqref{eqn:john-2.5} with $p = 2$, $F = |w_k| \phi^{\alpha}$, and $\delta(p, \chi)$ as in the previous paragraph to get
\begin{align}
\bigg( \int_\Omega |w_k|^{4\chi} \phi^{4\alpha \chi} d\eta \bigg)^{(1-\delta)/4\chi} 
&\leq c(d) + c(d, \chi) \bigg( \int_\Omega |w_k|^{n/(n-1)} \phi^{\alpha n/(n-1) - 2\beta - 2} dx  \bigg)^{(\chi-\delta)/4\chi} \nonumber \\
&\leq c(d) + c(d, \chi) \bigg( \int_{\Omega \cap B_{\gamma/2}} |w|^{n/(n-1)} dx \bigg)^{(\chi-\delta)/4\chi} \nonumber \\
&\leq c(d, \chi). \label{eqn:john-7}
\end{align}
(Break into two cases: either $\int_\Omega F^{2p\chi} d\eta \geq 1$ or $\leq 1$.)

For $\nu = 1, 2, \ldots$, define
\[
\Psi(\nu) = \left( \int_\Omega |w|^{4\chi^\nu} \phi^{4\alpha \chi^\nu} d\eta \right)^{1/4\chi^\nu}
\]
From \eqref{eqn:john-7}, taking $k \to \infty$, we have $\Psi(1) \leq c(d, \chi)$.  From \eqref{eqn:john-5} we have
\[
\Psi(\nu + 1) \leq c \chi^{\nu} + c^{\chi^{-\nu}} \chi^{\nu \chi^{-\nu}} \Psi(\nu)
\]
for $c = c(d, \chi)$.  Now
\[
\prod_{\mu=0}^\infty c^{\chi^{-\mu}} \chi^{\mu \chi^{-\mu}} \leq c(d, \chi),
\]
and so we have
\[
\Psi(\nu) \leq \sum_{\mu=1}^\nu c \chi^\mu \leq c(d, \chi) \chi^\nu.
\]
Recalling that $4\alpha \chi^\nu - 2\beta - 2 > 0$ for all $\nu$, we get by Holder's inequality
\[
\bigg( \int_{\Omega \cap B_{\gamma/2}} |w|^j dx \bigg)^{1/j} \leq c(d, \chi) j \quad \forall j = 1, 2, \ldots
\]
And hence, using Stirling's approximation and ensuring $\delta \leq 1/2e$ we have
\begin{align*}
\int_{\Omega \cap B_{\gamma/2}} e^{\delta |w|} dx \leq \sum_{j=0}^\infty \int_{\Omega \cap B_{\gamma/2}} \delta^j |w|^j/j! \leq c \sum_{j=0}^\infty (\delta j)^j / j!  \leq c(d, \chi).
\end{align*}
Therefore
\begin{equation}\label{eqn:john-30}
\bigg( \int_{\Omega \cap B_{\gamma/2}} (f+\eps)^\delta dx \bigg) \bigg(  \int_{\Omega \cap B_{\gamma/2}} (f+\eps)^{-\delta} dx \bigg) \leq c(d, \chi)^2
\end{equation}
and, taking $\eps \to 0$, by the montone convergence theorem we get \eqref{eqn:john-concl} for $p \leq 1/2e$ and $\gamma/2$ in place of $\gamma$.

To prove \eqref{eqn:john-concl} for all $0 < p < \chi$ Simon argues as follows.  For $\theta < 1$, $\theta \neq 0$, and $\zeta \in C^1_c(B_1)$, we can plug in $(f+\eps)^{\theta-1} \zeta^2$ into \eqref{eqn:super-hyp} to obtain
\[
(1-\theta) \int_\Omega (f+\eps)^{\theta-2} |Df|^2 \zeta^2 \leq \int_\Omega (f+\eps)^{\theta-1} 2\zeta Df \cdot D\zeta .
\]
If we set $w = (f+\eps)^{\theta/2}$ and rearrange then we obtain
\[
\int_\Omega |D(w \zeta)|^2 \leq c(\theta) \int_\Omega w^2 |D\zeta|^2.
\]
This implies $w\zeta \in W^{1,2}(\Omega \cap B_r)$ for all $r < 1$.  If we replace $\zeta$ with $\phi^{\alpha - \beta}$ for $\beta \chi = \beta + 1$ and $\alpha - \beta - 1 > 0$ and $\phi$ as before, then we get
\[
\bigg( \int_\Omega w^{2\chi} \phi^{2\alpha \chi} d\eta \bigg)^{1/\chi} \leq c(\theta, d, \chi) \int_\Omega w^2 \phi^{2\alpha} d\eta
\]
for $d\eta = \phi^{-2\beta-2}dx = \phi^{2\beta \chi} dx$.  Now apply Holder like in \eqref{eqn:john-7} to get, for any $\delta \in (0, 1)$:
\[
\bigg( \int_\Omega w^{2\chi} \phi^{2\alpha \chi} d\eta \bigg)^{(1-\delta)/\chi} \leq c(\theta, d, \chi) \bigg( \int_\Omega (w^2 \phi^{2\alpha})^{(1-\delta)\chi/(\chi-\delta)} d\eta \bigg)^{(\chi-\delta)/\chi}.
\]
Recalling that $\alpha - \beta \chi = \alpha - \beta - 1 > 0$ and our definition of $w$, and taking $\eps \to 0$, we then have
\begin{equation}\label{eqn:john-31}
\bigg( \int_{\Omega \cap B_{\gamma/4}} f^{\theta \chi} \bigg)^{(1-\delta)/\chi} \leq c(\theta, d, \chi) \bigg( \int_{\Omega \cap B_{\gamma/2}} f^{\theta(1-\delta)\chi/(\chi-\delta)} \bigg)^{(\chi-\delta)/\chi}.
\end{equation}
Given any $0 < p < \chi$, we can write $p = \theta \chi$ for $\theta \in (0, 1)$.  We can then choose a $\delta=\delta(p, \chi)$ so that 
\[
\theta (1-\delta) \chi/(\chi - \delta) = \min\{ 1/2e, \theta/2 \}.
\]
Combining \eqref{eqn:john-31}, \eqref{eqn:john-30} with our choice of $\delta$ we obtain
\begin{align*}
&\bigg( \int_{\Omega \cap B_{\gamma/4}} f^p \bigg) \bigg( \int_{\Omega \cap B_{\gamma/4}} f^{-p} \bigg)  \\
&\quad\leq c(p, d, \chi) \bigg( \int_{\Omega \cap B_{\gamma/2}} f^{\min\{1/2e, \theta/2\}} \bigg)^{\frac{\chi-\delta}{1-\delta}} \bigg( \int_{\Omega \cap B_{\gamma/2}} f^{-\min\{ 1/2e, \theta/2\}} \bigg)^{\frac{\chi-\delta}{1-\delta}} \leq c(p, d, \chi),
\end{align*}
which proves \eqref{eqn:john-concl} with $\gamma/4$ in place of $\gamma$.
\end{proof}

\section{Continuity up to the boundary}

In this section, we prove a uniform H\"older estimate for minimizers of the Alt-Caffarelli functional with Lipschitz data on the boundary of a smooth domain, which we use in the proof of \cref{thm:fol}.

\begin{lemma}\label{l:continuity}
	Let $g:\R^{d-1}\to\R$ be a $C^{1,\alpha}$ function and let
	$$\Omega:=\Big\{(x',x_d)\in\R^{d-1}\times\R\ :\ x_d>g(x')\Big\}\qquad\text{and}\qquad \Gamma:=\Big\{(x',g(x'))\ :\ x'\in\R^{d-1}\Big\}.$$
	Let $\varphi:\R^d\to\R$ be a non-negative Lipschitz continuous function and let $u:\Omega\cup \Gamma\to\R$ be a non-negative function in $W^{1,2}_{loc}(\Omega)$ such that $u=\varphi$ on $\Gamma$. Suppose that $u$ satisfies the following minimality condition in a ball $B_R$
	\begin{align*}
	\int_{K}|D u|^2\,dx\le \int_K|D (u+\psi)|^2\,dx+|K|&\quad\text{for every}\quad \psi\in W^{1,2}_0(K)\\
	&\qquad\text{and every open set}\quad K\subset\Omega\cap B_R\,.
	\end{align*}
	Then, $u$ is $\gamma$-H\"older continuous in $B_{\sfrac{R}{2}}\cap(\Omega\cup\Gamma)$ for any $\gamma\in(0,1)$.
\end{lemma}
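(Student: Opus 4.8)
The plan is to reduce the statement to a Campanato-type decay estimate for $u$ at every point of $B_{R/2}\cap(\Omega\cup\Gamma)$ and then to invoke the integral characterization of H\"older continuity. The mechanism throughout is that the stated minimality condition makes $u$ an \emph{almost-minimizer} of the Dirichlet energy whose energy defect at scale $r$ is bounded by $|K|\le\omega_d r^d$; since this defect carries one more power of $r$ than the natural scaling $r^{d-2}$ of the Dirichlet energy, $u$ behaves at every scale like a harmonic function up to an error that is negligible for any target exponent $\gamma<1$ --- but, with this soft argument, not for $\gamma=1$, consistently with the statement. We may assume that $u$ has finite Dirichlet energy in $\Omega\cap B_\rho$ for every $\rho<R$ (implicit in the requirement that the trace of $u$ on $\Gamma$ be $\varphi$), so that all averaged quantities below are finite.

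\emph{Interior estimate.} Fix $x_0\in\Omega\cap B_{R/2}$ and a ball $B_r(x_0)\subset\Omega\cap B_R$, and let $h$ be the harmonic replacement of $u$ in $B_r(x_0)$. Taking $K=B_r(x_0)$ and $\psi=h-u$ in the minimality condition and using the orthogonality $\int_{B_r}|Du|^2=\int_{B_r}|Dh|^2+\int_{B_r}|Du-Dh|^2$, one gets $\int_{B_r(x_0)}|Du-Dh|^2\le\omega_d r^d$, hence $|B_r|^{-1}\int_{B_r(x_0)}|u-h|^2\le Cr^2$ by the Poincar\'e inequality, with $C$ an absolute constant. Combining this with the standard fact that the normalized $L^2$-deviation of a harmonic function from its mean over $B_\rho(x_0)$ decays at least linearly in $\rho$, and iterating along a geometric sequence of radii $r_k=\tau^k r$ with $\tau=\tau(\gamma)$ chosen small, yields $|B_\rho|^{-1}\int_{B_\rho(x_0)}|u-\overline{u}_{x_0,\rho}|^2\le C(\gamma)\,\rho^{2\gamma}$ for every $\rho\le r$ and every $\gamma\in(0,1)$, where $\overline{u}_{x_0,\rho}$ is the mean of $u$ over $B_\rho(x_0)$.

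\emph{Boundary estimate.} This is the heart of the matter and the only place where the $C^{1,\alpha}$ regularity of $\Gamma$ is used. Fix $x_0\in\Gamma\cap B_{R/2}$, set $B_r^+:=\Omega\cap B_r(x_0)$ for small $r$, and let $v$ solve $\Delta v=0$ in $B_r^+$ with $v-u\in W^{1,2}_0(B_r^+)$; thus $v=\varphi$ on $\Gamma\cap B_r(x_0)$ and $v=u$ on $\Omega\cap\partial B_r(x_0)$. As before, taking $K=B_r^+$, $\psi=v-u$, and using $|B_r^+|\ge c\,r^d$ (interior cone condition of the $C^{1,\alpha}$ domain $\Omega$), we get $\int_{B_r^+}|Du-Dv|^2\le\omega_d r^d$ and $|B_r^+|^{-1}\int_{B_r^+}|u-v|^2\le Cr^2$. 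Now decompose $v-\varphi(x_0)=\psi_1+\psi_2$, where $\psi_1$ is harmonic in $B_r^+$, equal to $\varphi-\varphi(x_0)$ on $\Gamma\cap B_r(x_0)$ and to $0$ on $\Omega\cap\partial B_r(x_0)$, and $\psi_2$ is harmonic in $B_r^+$, equal to $0$ on $\Gamma\cap B_r(x_0)$ and to $u-\varphi(x_0)$ on $\Omega\cap\partial B_r(x_0)$. By the maximum principle and the Lipschitz bound on $\varphi$, $|\psi_1|\le[\varphi]_{\mathrm{Lip}}\,r$ throughout $B_r^+$. For $\psi_2$ I would flatten $\Gamma$ near $x_0$ by a $C^{1,\alpha}$ diffeomorphism --- which turns $\Delta$ into a divergence-form operator with uniformly elliptic $C^{0,\alpha}$ coefficients and leaves both the Lipschitz character of the boundary data and the structure of the minimality condition unchanged --- so that $\psi_2$ vanishes on a flat boundary portion; by odd reflection and interior estimates it then satisfies the linear boundary decay $\sup_{B_{\tau r}^+}|\psi_2|^2\le C\tau^2\,|B_r^+|^{-1}\int_{B_r^+}|\psi_2|^2$ for $\tau$ small. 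Since $|B_r^+|^{-1}\int_{B_r^+}|\psi_2|^2\le C\big(|B_r^+|^{-1}\int_{B_r^+}|u-\varphi(x_0)|^2+r^2\big)$, assembling the three contributions gives
\[
|B_{\tau r}^+|^{-1}\!\!\int_{B_{\tau r}^+}\!|u-\varphi(x_0)|^2\ \le\ C\tau^2\,|B_r^+|^{-1}\!\!\int_{B_r^+}\!|u-\varphi(x_0)|^2\ +\ C(\tau)\big(1+[\varphi]_{\mathrm{Lip}}^2\big)\,r^2,
\]
which has exactly the structure of the interior inequality. Iterating it gives $|\Omega\cap B_\rho(x_0)|^{-1}\int_{\Omega\cap B_\rho(x_0)}|u-\varphi(x_0)|^2\le C(\gamma)\,\rho^{2\gamma}$ for all small $\rho$ and every $\gamma\in(0,1)$, which in particular controls the deviation of the mean of $u$ over $\Omega\cap B_\rho(x_0)$ from $\varphi(x_0)$, and hence also the Campanato quantity $|\Omega\cap B_\rho(x_0)|^{-1}\int_{\Omega\cap B_\rho(x_0)}|u-\overline{u}_{x_0,\rho}|^2$.

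Combining the interior and boundary estimates, $u$ satisfies a uniform Campanato estimate with exponent $d+2\gamma$ on $B_{R/2}\cap(\Omega\cup\Gamma)$; since $\Omega$ enjoys a measure-density condition (it is a $C^{1,\alpha}$ domain), Campanato's characterization of H\"older spaces yields $u\in C^{0,\gamma}\big(B_{R/2}\cap(\Omega\cup\Gamma)\big)$ for every $\gamma\in(0,1)$. The main obstacle is the boundary decay for $\psi_2$ --- making rigorous the linear decay toward $x_0$ of a harmonic function vanishing on the \emph{curved} portion $\Gamma\cap B_r(x_0)$ of the boundary; this is precisely what the $C^{1,\alpha}$ hypothesis on $\Gamma$ provides, and it is dispatched by the flattening-and-reflection argument above, while everything else is a routine Campanato iteration.
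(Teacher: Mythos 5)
Your proof is correct and reaches the same conclusion, but by a genuinely different route: you run a Campanato iteration on the normalized $L^2$ oscillation of $u$, whereas the paper runs a Morrey iteration on the scaled Dirichlet energy $\rho^{-(d-2(1-\gamma))}\int_{B_\rho}|Du|^2$ and then invokes Morrey's lemma. Both exploit the same structure of almost-minimality (the defect $|K|\le \omega_d r^d$ carries one extra power of $r$ over the natural $r^{d-2}$, which is what permits any $\gamma<1$ but not $\gamma=1$), and both flatten $\Gamma$ with the $C^{1,\alpha}$ diffeomorphism $\Phi$ to reduce to a divergence-form operator with H\"older-continuous coefficients close to the identity at small scales. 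The main divergence is in how the Lipschitz boundary datum is absorbed: the paper constructs the harmonic extension $h_\varphi$ of $\varphi\circ\Phi$ and proves the decay estimate \eqref{e:h-varphi-estimate} for $\int_{B_r}|Dh_\varphi|^2$ using an intermediate radius $r^{1-\eps}$ and subharmonicity of $|Dh_\varphi|^2$; you instead split the harmonic replacement into $\psi_1$ (carrying the boundary data, controlled in $L^\infty$ by $[\varphi]_{\mathrm{Lip}}\,r$ via the maximum principle) and $\psi_2$ (vanishing on $\Gamma$, with linear boundary decay). Your split is more elementary precisely because the Campanato framework only needs an $L^\infty$ bound on $\psi_1$, while the Morrey framework needs the subtler $L^2$-gradient decay of $h_\varphi$. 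One small imprecision: odd reflection of $\psi_2$ across the flattened boundary is not available, since the transformed matrix $A=D\Phi\,D\Phi^t$ does not satisfy the parity compatibility that odd reflection requires; the correct mechanism for the linear boundary decay of $\psi_2$ is the De Giorgi--Nash--Moser local boundedness together with boundary Schauder (or boundary Harnack) estimates for divergence-form operators with $C^\alpha$ coefficients and zero Dirichlet data on a flat boundary portion --- a standard fact, so this does not affect the correctness of the overall argument, but the phrase ``odd reflection'' should be replaced. You should also briefly record the routine interpolation between interior and boundary estimates (for $x_0$ at distance $\delta$ from $\Gamma$, use the interior estimate at radii $\rho<\delta/2$ and the boundary estimate at the nearest point of $\Gamma$ for $\rho\ge\delta/2$); the paper compresses this into the remark ``it will suffice to take $x_0\in\del\Omega$.''
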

\begin{proof}
	We define the $C^{1,\alpha}$ map 
	$$\Psi:\Omega\cup\Gamma\to H:=\{(x',y_d)\ :\ y_d\ge 0\}\ ,\qquad \Psi(x',x_d):=(x',x_d-g(x')),$$
	and its inverse 
	$$\Phi:H\to\Omega\cup\Gamma\ ,\qquad \Phi(x',y_d):=(x',y_d+g(x')).$$
	We will prove that the function $u$ satisfies the estimate 
	\begin{equation}\label{e:morrey-estimate}
	\int_{B_r(x_0)}|D u|^2\,dx\le Cr^{d+2(\gamma-1)}
	\end{equation}
	for all $x_0 \in \overline{\Omega} \cap B_{R/2}$, $r < R/4$, and some constant $C>0$ independent of $x_0, r$. Thus, we can apply the Morrey Lemma (see for instance \cite[Lemma 3.12]{Bojo_ln}) to the function $u-\varphi$ obtaining that it is $\gamma$-H\"older continuous, which will conclude the proof. In order to prove \eqref{e:morrey-estimate}, it will suffice to take $x_0 \in \del\Omega$, and for simplicity we can assume that $x_0=0$, $\Phi(0)=0$ and $D\Phi(0)=D\Psi(0)=Id$, and $R = 2$. We also set
	$$A(x):=D\Phi(x)D\Phi(x)^t,$$
	and we notice that there is a constant $C_A$ such that 
	\begin{equation}\label{e:ellipticity-A}
	(1-C_Ar^\alpha)\text{\rm Id}\le A(x)\le (1+C_Ar^\alpha)\text{\rm Id}\quad\text{for every}\quad x\in B_r.
	\end{equation}
	For simplicity, we will denote by $C_d$ any constant depending only on the dimension $d$; by $C_{g}$ we denote  constants depending only on $g,\Phi,\Psi$ and $A$; by $C_\varphi$ we denote constants depending only on $\|\varphi\|_{L^\infty}$ and $\|D\varphi\|_{L^\infty}$.\\

	\noindent{\bf The harmonic extension of $\varphi\circ\Phi$.} Let $h_\varphi:H \cap B_2 \to\R$ be a function such that $\|h_\varphi\|_{L^\infty(H \cap B_2)}\le \|\varphi\|_{L^\infty(H\cap B_2)}$ and 
	$$\Delta h_\varphi=0\quad\text{in}\quad H \cap B_2 \ ,\qquad h_\varphi=\varphi\circ\Phi\quad\text{on}\quad\partial (H \cap B_2).$$
	Given $\eps>0$ and $r>0$, we consider the test function $\widetilde h_\varphi$ solution to 
	$$\Delta \widetilde h_\varphi=0\quad\text{in}\quad H\cap B_{2r^{1-\eps}}\ ,\qquad \widetilde h_\varphi=h_\varphi-\varphi\circ\Phi\quad\text{on}\quad\partial \big(H\cap B_{2r^{1-\eps}}\big).$$
	Then, using the subharmonicity of $|D \widetilde h_\varphi|^2$ and the gradient estimate, we get 
	\begin{align*}
	\int_{H \cap B_r}|D h_\varphi|^2\,dx&\le \int_{H \cap B_r}|D (\varphi\circ\Phi+\widetilde h_\varphi)|^2\,dx\le 2\int_{H \cap B_r}| D (\varphi\circ\Phi)|^2\,dx+2\int_{H \cap B_r}|D \widetilde h_\varphi|^2\,dx\\
	&\le C_dr^d\|D (\varphi\circ\Phi)\|_{L^\infty(H \cap B_r)}^2+C_d \frac{|B_r|}{|B_{r^{1-\eps}}|}\int_{H \cap B_{r^{1-\eps}}}|D \widetilde h_\varphi|^2\,dx\\
	&\le C_{d,\varphi,g} r^d+C_dr^{d\eps}\frac1{r^{2(1-\eps)}}\|\widetilde h_\varphi\|_{L^\infty(H \cap B_{2r^{1-\eps}})}^2\le C_{d,\varphi,g}(r^d+r^{(d+2)\eps-2}).
	\end{align*}
	Now, for any fixed $\beta>0$, we can choose $\displaystyle\eps:=\frac{d+2\beta}{d+2}$, obtaining
	\begin{equation}\label{e:h-varphi-estimate}
	\int_{H \cap B_r}|D h_\varphi|^2\,dx\le C_{d,\varphi,g} r^{d+2(\beta-1)}\quad\text{for every}\quad r\in(0,\sfrac12).
	\end{equation}

	\noindent{\bf Almost-minimality of $u$.} Let $r\in(0,1)$ and let $h$ be the harmonic extension:
	$$\Delta h=0\quad\text{in}\quad H\cap B_r\ ,\qquad h=u\circ\Phi-h_\varphi\quad\text{in}\quad \partial (H\cap B_r)\,,$$
	so in particular, $h\equiv 0$ on $B_r\cap\partial H$. Let $f:=h\circ\Phi^{-1}$. Then 
	$$\text{div}(A(x)D f)=0\quad\text{in}\quad \Omega_r\ ,\qquad u=f\quad\text{on}\quad \partial\Omega_r\,,$$
	where $\Omega_r:=\Phi(H\cap B_{r})$. 
	Using the equation for $f$, the ellipticity condition \eqref{e:ellipticity-A} and the optimality of $u$ tested with $f$ in the set $\Omega_r$, we get that
	\begin{align*}
	\int_{\Omega_r}D (u-f)\cdot A(x)D (u-f)\,dx
	&= \int_{\Omega_r}D u\cdot A(x)D u\,dx- \int_{\Omega_r}D f\cdot A(x)D f\,dx\\
	&\le(1+C_gr^\alpha)  \left(\int_{\Omega_r}|D u|^2\,dx-\frac{1-C_gr^\alpha}{1+C_gr^\alpha}\int_{\Omega_r}|D f|^2\,dx\right)\\
	&\le(1+C_gr^\alpha)  \left(|\Omega_r|+C_gr^\alpha\int_{\Omega_r}|D f|^2\,dx\right).
	\end{align*}
	Using $\displaystyle \int_{\Omega_r}D f\cdot A(x)D f\,dx\le \int_{\Omega_r}D u\cdot A(x)D u\,dx$ and the ellipticity of $A$, we get
	\begin{equation}\label{e:eq-for-difference-u-f}
	\int_{\Omega_r}|D (u-f)|^2\,dx\le C_gr^d+C_gr^\alpha\int_{\Omega_r}|D u|^2\,dx.
	\end{equation}
	\noindent{\bf Main estimate.} We fix a constant $\kappa\in(0,1)$. Using \eqref{e:eq-for-difference-u-f} and \eqref{e:h-varphi-estimate}, we compute
	\begin{align*}
	\int_{\Phi(H\cap B_{\kappa r})}|D u|^2\,dx
	&\le2\int_{\Phi(H\cap B_{r})}|D (u-f)|^2\,dx+2\int_{\Phi(H\cap B_{\kappa r})}|D f|^2\,dx\\
	&\le C_gr^d+C_gr^\alpha\int_{\Phi(H\cap B_{r})}|D u|^2\,dx+C_g\int_{H\cap B_{\kappa r}}|D h|^2\,dx\\
	&\le C_gr^d+C_gr^\alpha\int_{\Phi(H\cap B_{r})}|D u|^2\,dx\\
	&\qquad+C_g\int_{H\cap B_r}|D h_\varphi|^2\,dx+C_g\int_{H\cap B_{\kappa r}}|D (h+h_\varphi)|^2\,dx\\
	&\le C_{d,\varphi,g}r^{d-2(1-\beta)}+C_gr^\alpha\int_{\Phi(H\cap B_{r})}|D u|^2\,dx+C_g\int_{H\cap B_{\kappa r}}|D (h+h_\varphi)|^2\,dx\,.
	\end{align*}
	Now, since $h+h_\varphi$ is harmonic in $H\cap B_r$ and vanishes on $\partial H\cap B_r$, we obtain
	\begin{align*}
	\int_{\Phi(H\cap B_{\kappa r})}|D u|^2\,dx
	&\le C_{d,\varphi,g}r^{d-2(1-\beta)}+C_gr^\alpha\int_{\Phi(H\cap B_{r})}|D u|^2\,dx+C_g\frac{|B_r|}{|B_{\kappa r}|}\int_{H\cap B_{r}}|D (h+h_\varphi)|^2\,dx\\
	&\le C_{d,\varphi,g}r^{d-2(1-\beta)}+C_gr^\alpha\int_{\Phi(H\cap B_{r})}|D u|^2\,dx+C_g\frac{|B_r|}{|B_{\kappa r}|}\int_{H\cap B_{r}}|D h|^2\,dx\\
	&\le C_{d,\varphi,g}r^{d-2(1-\beta)}+C_g\big(r^\alpha+\kappa^d\big)\int_{\Phi(H\cap B_{r})}|D u|^2\,dx.
	\end{align*}
	\noindent{\bf Iteration estimate and conclusion.} We take $\gamma\in(0,\beta)$ and we set 
	$$r_n=\kappa^n\qquad\text{and}\qquad M_n:=\frac{1}{r_n^{d-2(1-\gamma)}}\int_{\Phi(H\cap B_{r_n})}|D u|^2\,dx.$$
	Then, setting $A:=\kappa^{-2}C_{d,g,\varphi}$ and $b:=2C_{g}\kappa^{2(1-\gamma)}$, we have 
	$$M_{n+1}\le A\kappa^{2n(\beta-\gamma)}+bM_n\quad\text{for every}\quad n\ge \frac{d}{\alpha}.$$
	We now choose $\kappa$ in such a way that $b\le1$. Then, $M_n$ remains bounded by a universal constants. Indeed,  if $n_0$ is the smallest integer greater than $\sfrac{d}{\alpha}$, then 
	$$M_n\le \frac{A}{1-\kappa^{2(\beta-\gamma)}}+M_{n_0}\quad\text{for every}\quad n\ge n_0,$$ 
	which concludes the proof of \eqref{e:morrey-estimate}.
\end{proof}

\end{document}